\newcolumntype{M}{>{\centering\arraybackslash}m{\dimexpr.22\linewidth-2\tabcolsep}}
\font\eulersm=eusm10 at 11pt
\def\esm#1{\hbox{\eulersm {#1}}}
\newcommand{\T}{\esm{T}}
\font\eulersmm=eusm10 at 9pt
\def\esmm#1{\hbox{\eulersmm {#1}}}
\newcommand{\sT}{\esmm{T}}
\newtheorem{introtheorem}{Theorem}
\newtheorem{introcorollary}[introtheorem]{Corollary}
\theoremstyle{plain}
\newtheorem{theorem}{Theorem}[section]
\newtheorem{proposition}[theorem]{Proposition}
\newtheorem{lemma}[theorem]{Lemma}
\newtheorem{corollary}[theorem]{Corollary}
\theoremstyle{definition}
\newtheorem{definition}[theorem]{Definition}
\newtheorem{example}[theorem]{Example}
\newtheorem{remark}[theorem]{Remark}
\newtheorem{se}[theorem]{}
\newcommand{\pushright}[1]{\ifmeasuring@#1\else\omit\hfill$\displaystyle#1$\fi\ignorespaces}
\def\til#1{\widetilde{#1}}
\def\ovl#1{\overline{#1}}
\renewcommand{\tilde}{\widetilde}
\renewcommand{\bar}{\overline}
\newcommand{\isomto}{\overset{\sim}{\rightarrow}}
\DeclareMathOperator{\PP}{\mathbf{P}}
\DeclareMathOperator{\Q}{\mathbf{Q}}
\DeclareMathOperator{\R}{\mathbf{R}}
\DeclareMathOperator{\C}{\mathbf{C}}
\DeclareMathOperator{\F}{\mathbf{F}}
\DeclareMathOperator{\p}{\mathfrak{p}}
\DeclareMathOperator{\GL}{\mathrm{GL}}
\DeclareMathOperator{\PGL}{\mathrm{PGL}}
\DeclareMathOperator{\fn}{\mathfrak{n}}
\DeclareMathOperator{\vol}{\mathrm{vol}}
\DeclareMathOperator{\gon}{gon}
\DeclareMathOperator{\sgon}{sgon}
\DeclareMathOperator{\V}{V}
\DeclareMathOperator{\E}{E}
\newcommand{\f}{\varphi}
\newcommand{\inv}{^{-1}}
\newcommand{\nn}{\nonumber}
\newcommand\size{\mathrm{size}}
\def\quotient#1#2{%
    \raise1ex\hbox{$#1$}\Big/\lower1ex\hbox{$#2$}%
}
\begin{document}

\date{\today\ (version 2.0)} 
\title[A combinatorial Li--Yau inequality and rational points on curves]{A combinatorial Li--Yau inequality \\ and rational points on curves}
\author[G.~Cornelissen]{Gunther Cornelissen}
\address{\normalfont Mathematisch Instituut, Universiteit Utrecht, Postbus 80.010, 3508 TA Utrecht, Nederland}
\email{g.cornelissen@uu.nl}
\author[F.~Kato]{Fumiharu Kato}
\address{\normalfont Department of Mathematics, Kumamoto University, Kurokami Kumamoto 860-8555, Japan} 
\email{kato@sci.kumamoto-u.ac.jp}
\author[J.~Kool]{Janne Kool}
\address{\normalfont Mathematisch Instituut, Universiteit Utrecht, Postbus 80.010, 3508 TA Utrecht, Nederland}
\curraddr{\normalfont Max-Planck-Institut f\"ur Mathematik, Postfach 7280, 53072 Bonn, Deutschland}
\email{jannekool@gmail.com}

\subjclass[2010]{05C50, 11G09, 11G18, 11G30, 14G05, 14G22, 14H51}
\keywords{\normalfont Gonality, graph, Li--Yau inequality, Mumford curve, Drinfeld modular curve, modular degree, torsion of Drinfeld module}

\begin{abstract} \noindent We present a method to control gonality of nonarchimedean curves based on graph theory. 

Let $k$ denote a complete nonarchimedean valued field. We first prove a lower bound for the gonality of a curve over the algebraic closure of $k$ in terms of the minimal degree of a class of graph maps, namely: one should minimize over all so-called finite harmonic graph morphisms to trees, that originate from any \emph{refinement} of the dual graph of the stable model of the curve. 

Next comes our main result: we prove a lower bound for the degree of such a graph morphism in terms of the first eigenvalue of the Laplacian and some ``volume'' of the original graph; this can be seen as a substitute for graphs of the Li--Yau inequality from differential geometry, although we also prove that the strict analogue of the original inequality fails for general graphs. 

Finally, we apply the results to give a lower bound for the gonality of arbitrary Drinfeld modular curves over finite fields and for general congruence subgroups $\Gamma$ of $\Gamma(1)$ that is linear in the index $[\Gamma(1):\Gamma]$, with a constant that only depends on the residue field degree and the degree of the chosen ``infinite'' place. This is a function field analogue of a theorem of Abramovich for classical modular curves. We present applications to uniform boundedness of torsion of rank two Drinfeld modules that improve upon existing results, and to lower bounds on the modular degree of certain elliptic curves over function fields that solve a problem of Papikian.  
\end{abstract}

\thanks{We thank Omid Amini, Matthew Baker, Dion Gijswijt, Mihran Papikian and Andreas Schweizer for very useful comments on a previous version of the manuscript. We also thank the authors of \cite{Amini} and \cite{Cadoret} for providing us with a preliminary version of their manuscripts.} 
\maketitle

\tableofcontents 

\section*{Introduction}
The \emph{gonality} $\mathrm{gon}_{{k}}(X)$ of a smooth projective curve $X$ over a field $k$ is defined as the minimal degree of a non-constant morphism from $X$ to the projective line $\PP^1_k$. If $k=\mathbf{C}$ is the complex numbers, $X$ can be considered as a compact Riemann surface, and Li and Yau \cite{LiYau} have established a lower bound on the gonality of $X$ over $\mathbf{C}$ in terms of of the hyperbolic volume and the first eigenvalue of the Laplacian of $X$. Such a bound has numerous applications, of which we mention one: 
Abramovich \cite{Abramovich} has combined it with a lower bound on the eigenvalue arising from the theory of automorphic forms (of which the currently sharpest value was given by Kim and Sarnak \cite{Kim}) to prove a lower bound on the gonality of modular curves for congruence groups that is linear in the genus of the curves (or, what is the same, linear in the index of the group in the full modular group). In this paper, we study a nonarchimedean analogue of these results. 

\medskip

The first result is an inequality between the (geometric) gonality $\mathrm{gon}_{\bar{k}}(X)$ of a curve $X$ defined over a complete nonarchimedean valued field $k$ and the ``gonality'' of the reduction graphs of suitable models of the curve. There are various complications, such as to establish a good theory for the reduction of a covering map $X \rightarrow \PP^1$. Such a map extends to the stable model, but not necessarily as a \emph{finite} morphism. This can be remedied by choosing suitable semi-stable models. The problem was studied by Liu and Lorenzini \cite{LiuLorenzini}, Coleman \cite{Coleman} and Liu \cite{Liu}, and more recently in \cite{Amini}. In Section \ref{extension}, we provide another (similar) solution, directly adapted to the applications that we have in mind. 

Next, we relate the gonality of the special fiber to what we call the \emph{stable gonality} of the intersection dual graph. For standard graph terminology, we refer to Section \ref{graphs}.  We also need the notion of an (indexed) finite harmonic graph morphism, for which we refer to Definition \ref{defmor}.  Given a graph $G$, then another graph $G'$ is called a \emph{refinement} of $G$ if it can be obtained from $G$ by finitely often performing the two following operations: (a) subdivision of an edge; (b) addition of a \emph{leaf}, i.e., the addition of an extra vertex and an edge between this vertex and a vertex of the already existing graph. The \emph{stable gonality} of $G$, denoted 
$ \mathrm{sgon}(G)$, is defined as the minimal degree of a finite harmonic morphism from any refinement of $G$ to a tree. This relates to, but is different from previous notions of gonality for graphs as introduced by Baker and Norine \cite{BakerNorine}, and Caporaso \cite{Caporaso} (cf.\ Appendix \ref{app} for a discussion of these different notions and how they relate to stable gonality). 

\begin{introtheorem}[= Corollary \ref{cor-31}] \label{compare}
Let $X$ be a geometrically connected projective smooth curve over a complete nonarchimedean valued $k$ with valuation ring $R$, and $\mathscr{X}$ the stable $R$-model of $X$.
Let $\ovl{k}$ be an algebraic closure of $k$. Let $\Delta(\mathscr{X}_0)$ denote the intersection dual graph of the special fiber $\mathscr{X}_0$. 
Then we have
$$
\mathrm{gon}_{\bar{k}}(X)\geq\mathrm{sgon}(\Delta(\mathscr{X}_0)). 
$$
\end{introtheorem}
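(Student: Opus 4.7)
The plan is to take any non-constant $\phi\colon X \to \PP^1_{\bar k}$ of degree $d = \gon_{\bar k}(X)$ and manufacture from it a finite harmonic morphism of degree $d$ from a refinement of $\Delta(\mathscr{X}_0)$ to a tree. Since gonality is stable under algebraic extension, I may first descend $\phi$ to a finite extension $k'/k$; after replacing $k$ by $k'$ and $R$ by its integral closure, the stable model $\mathscr{X}$ and its dual graph are unchanged up to a harmless edge-subdivision induced by base change, so the problem reduces to the case of a morphism $\phi$ defined over $k$ itself.

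Next I would invoke the semi-stable extension result of Section~\ref{extension}: it produces semi-stable $R$-models $\tilde{\mathscr{X}}$ of $X$ and $\tilde{\mathscr{Y}}$ of $\PP^1_k$, together with a \emph{finite} morphism $\tilde\phi\colon\tilde{\mathscr{X}}\to\tilde{\mathscr{Y}}$ extending $\phi$, in such a way that the canonical birational morphism $\tilde{\mathscr{X}}\to\mathscr{X}$ only contracts chains of rational components and rational tails. The target dual graph $T := \Delta(\tilde{\mathscr{Y}}_0)$ is automatically a tree: since $\PP^1$ has arithmetic genus zero, and the arithmetic genus of $\tilde{\mathscr{Y}}_0$ equals $\sum_i g(C_i) + b_1(T)$, every component is rational and $T$ is acyclic.

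The third step passes to dual graphs. Write $G := \Delta(\tilde{\mathscr{X}}_0)$. Because $\tilde\phi$ is finite, it maps components to components and nodes to nodes, yielding a combinatorial map $\Delta(\tilde\phi)\colon G \to T$. I would index it by setting $m(v) = \deg(C_v \to \tilde\phi(C_v))$ for each vertex $v$ (with $C_v$ the corresponding component), and $m(e)$ equal to the ramification index of $\tilde\phi$ along the branch at the node $p_e$ for each edge $e$. All indices are positive precisely because $\tilde\phi$ is finite. Harmonicity at a vertex $v$ and an edge $e'$ of $T$ incident to $\tilde\phi(v)$, in the sense of Definition~\ref{defmor}, reads
\[
\sum_{\substack{e\ni v \\ \tilde\phi(e)=e'}} m(e) \;=\; m(v),
\]
which is exactly the classical $\sum_i e_i = \deg$ identity for the finite cover $C_v \to \tilde\phi(C_v)$ applied at the nodes above $p_{e'}$ lying on $C_v$. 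Summing indices over a full fibre above any edge of $T$ recovers $\deg\tilde\phi = d$, so $\Delta(\tilde\phi)$ has degree $d$.

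Finally, the contraction $\tilde{\mathscr{X}} \to \mathscr{X}$ only collapses rational chains and tails, which on dual graphs means inverting the two allowed refinement operations: smoothing subdivided edges and pruning leaves. Hence $G$ is a refinement of $\Delta(\mathscr{X}_0)$ in the sense defined above, and the indexed graph morphism $\Delta(\tilde\phi)\colon G\to T$ of degree $d$ witnesses $\sgon(\Delta(\mathscr{X}_0)) \le d = \gon_{\bar k}(X)$. The main obstacle is packaged inside Section~\ref{extension}: one must guarantee that the semi-stable enlargement required to make $\tilde\phi$ finite inserts \emph{only} rational chains and rational tails (and not, say, higher-genus components or loops), so that the contraction to the stable model decomposes cleanly into the refinement operations (a) and (b). The harmonicity verification itself is then routine given a careful local description of the finite cover $\tilde\phi$ around each node, using that the morphism of formal neighbourhoods is a disjoint union of ramified covers of a formal node whose partition of ramification indices is precisely the $\sum e_i = \deg$ datum.
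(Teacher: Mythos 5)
Your proposal is correct and follows essentially the same route as the paper's proof of Corollary \ref{cor-31}: extend the degree-$d$ map to a finite, inversion-free semi-stable model via Theorem \ref{theorem-21}, read off a finite harmonic graph morphism to a tree (vertex indices = component degrees, edge indices = local ramification at the nodes, harmonicity = the $\sum e_i=\deg$ identity), and observe that domination of the stable model translates exactly into the two refinement operations. The only points the paper treats with more care are the initial reduction to a \emph{separable} morphism (its edge index is defined via decomposition groups, which presupposes separability) and the verification that the two branches of $\mathscr{X}'_0$ meeting at a node carry the \emph{same} ramification index, so that $r_{\phi}(e)$ is well defined on edges rather than on half-edges.
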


Two examples (\ref{examplegon1} and \ref{examplegon2})  illustrate that both refinements operation are necessary. First, the ``banana graph'' $B_n$ given by two vertices joined by $n>1$ distinct edges has stable gonality $2$, although the minimal degree of a finite harmonic graph morphism from $B_n$ itself (without any refinement) to a tree is $n$.  Secondly, the minimal degree of a finite harmonic graph morphism from any subdivision of the complete graph $K_4$ to a tree is $4$. However, by adding leaves, the stable gonality can be shown to be $3$.  

We then prove an analogue for graphs of the upper bound on gonality from Brill-Noether theory for the gonality of curves over arbitrary fields (in this generality a theorem of Kleiman--Laksov \cite{Kleiman}):
 
\begin{introtheorem}[=Theorem \ref{BNN2}] \label{BNN}
For any graph $G$ with first Betti number $g \geq 2$, we have an upper bound $$\mathrm{sgon}(G)\leq\lfloor\frac{g+3}{2}\rfloor.$$
\end{introtheorem}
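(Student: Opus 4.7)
The plan is to derive the bound from the classical Brill--Noether / Kleiman--Laksov upper bound on gonality of smooth projective curves combined with Theorem~A, using the fact that every topologically minimal graph can be realised as the stable reduction graph of a Mumford curve. Given $G$ with $\beta_1(G) = g \geq 2$, I would first reduce to the case that $G$ is topologically minimal (every vertex has valence at least three), and then pass to a curve by Mumford / Schottky uniformisation.

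For the reduction step, let $G^{\mathrm{st}}$ be obtained from $G$ by iteratively removing leaves and smoothing (unsubdividing at) valence-$2$ vertices; this preserves $\beta_1$, so $\beta_1(G^{\mathrm{st}}) = g$, and $G$ is itself a refinement of $G^{\mathrm{st}}$ in the sense of the paper. I would show that $\mathrm{sgon}(G) = \mathrm{sgon}(G^{\mathrm{st}})$. The inequality $\mathrm{sgon}(G^{\mathrm{st}}) \leq \mathrm{sgon}(G)$ is immediate from the definition, since every refinement of $G$ is also a refinement of $G^{\mathrm{st}}$. For the converse, given a finite harmonic morphism $\phi : H \to T$ from a refinement $H$ of $G^{\mathrm{st}}$, I would extend $\phi$ to a finite harmonic morphism of the same degree from a common refinement of $H$ and $G$: the extra edge subdivisions needed to incorporate the subdivision data of $G$ lift to matching subdivisions of $T$, while each extra leaf at a vertex $v$ of $H$ is added symmetrically at every preimage of $\phi(v)$ together with one new leaf of $T$, the new edges being assigned indices equal to the corresponding vertex indices of $\phi$ so as to preserve harmonicity and keep the degree constant.

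Next, the reduced graph $G^{\mathrm{st}}$ has all valences at least three, so by classical Mumford / Schottky uniformisation it may be realised as the intersection dual graph $\Delta(\mathscr{X}_0)$ of the stable reduction of a Mumford curve $X$ of genus $g$ over a suitably chosen complete nonarchimedean valued field $k$. Concretely, one picks $k$ whose residue field has cardinality $q$ at least the maximum vertex valence of $G^{\mathrm{st}}$ minus one; then the Bruhat--Tits tree of $\PGL_2(k)$ (regular of valence $q+1$) admits a free cocompact action by a Schottky group $\Gamma$ of rank $g$ with quotient graph $G^{\mathrm{st}}$, and the associated Mumford curve $X = \Gamma \backslash \Omega$ is smooth projective of genus $g$ over $k$ with stable reduction dual graph $G^{\mathrm{st}}$.

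By the classical Brill--Noether / Kleiman--Laksov upper bound, which holds over any algebraically closed field, one has $\mathrm{gon}_{\bar{k}}(X) \leq \lfloor (g+3)/2 \rfloor$. Applying Theorem~A to $X$ now yields
\[
\mathrm{sgon}(G) = \mathrm{sgon}(G^{\mathrm{st}}) = \mathrm{sgon}(\Delta(\mathscr{X}_0)) \leq \mathrm{gon}_{\bar{k}}(X) \leq \lfloor (g+3)/2 \rfloor,
\]
completing the proof. The main technical input, and the principal obstacle, is the realisation step: verifying that every finite graph with $\beta_1 = g$ and all valences at least three arises as the reduction graph of some Mumford curve of genus $g$. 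This is classical from the theory of $p$-adic Schottky groups, but requires the freedom to choose a base field with sufficiently large residue field to accommodate the high-valence vertices of $G^{\mathrm{st}}$.
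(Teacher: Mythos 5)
Your proposal is correct and follows essentially the same route as the paper: reduce to a stable representative using the refinement-invariance of $\mathrm{sgon}$, realise that graph via Schottky/Mumford uniformisation inside the Bruhat--Tits tree of $\PGL_2$ over a local field with sufficiently large residue field, and combine the Kleiman--Laksov bound with Theorem~A. The only (harmless) difference is that the paper's explicit Schottky construction yields a \emph{subdivision} of the stable graph as the quotient $\Gamma\backslash\T_\Gamma$ rather than the stable graph itself, which suffices because $\mathrm{sgon}$ is invariant under refinement.
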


The main result is a spectral lower bound for the stable gonality of a graph. 
Let $\lambda_G$ denote the first non-trivial (i.e., smallest non-zero) eigenvalue of the Laplacian $L_G$ of $G$, and let $$\Delta_G:=\max\{\deg(v) \colon v\in V(G)\}$$ denote the maximal vertex degree of $G$. Finally, let $|G|$ denote the number of vertices of $G$. Then we have

\begin{introtheorem}[= Corollary \ref{deafschatting}] \label{spectral} The stable gonality of a graph $G$ satisfies \[\sgon(G)\geq \left\lceil \frac{\lambda_G}{\lambda_G+4(\Delta_G+1)}|G| \right \rceil.\] \end{introtheorem}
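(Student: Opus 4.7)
My plan is to apply the variational principle for the Laplacian,
\[ \lambda_G \;=\; \min_{f\perp\mathbf{1}}\ \frac{\sum_{(u,v) \in E(G)} (f(u)-f(v))^2}{\sum_{v \in V(G)} f(v)^2}, \]
and exhibit a mean-zero test function $f \colon V(G) \to \R$ with Rayleigh quotient $R(f) \leq 4d(\Delta_G+1)/(|G|-d)$, where $d := \sgon(G)$. This inequality rearranges to $d(\lambda_G+4(\Delta_G+1)) \geq \lambda_G |G|$, which is the claim. The boundary case $d \geq |G|/2$ is dispatched separately using the standard spectral upper bound $\lambda_G \leq 2\Delta_G$. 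The function $f$ will be built from any finite harmonic morphism $\phi \colon G' \to T$ of degree $d$ realising $\sgon(G)$, where $G'$ is a refinement of $G$ and $T$ is a tree.

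The construction proceeds via a centroid cut. Weighting each $t \in V(T)$ by $N(t) := |\phi^{-1}(t) \cap V(G)|$, so $\sum_t N(t) = |G|$, the classical centroid lemma supplies a vertex $c \in V(T)$ such that every component of $T \setminus \{c\}$ has $N$-mass at most $|G|/2$. Set $A := \phi^{-1}(c) \cap V(G)$; the indexed harmonicity of $\phi$ gives $\sum_{v' \in \phi^{-1}(c)} m_\phi(v') = d$ with each $m_\phi(v') \geq 1$, so $|A| \leq d$. A short greedy case split, depending on whether some single component $T_i$ already carries $N$-mass between $|G|/3$ and $|G|/2$, or all components have $N$-mass below $|G|/3$, partitions the components of $T \setminus \{c\}$ into two families whose preimages $V^{\pm} \subseteq V(G)$ have sizes $p$ and $q$ both comparable to $(|G|-|A|)/2$. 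I then take $f := q\,\mathbf{1}_{V^+} - p\,\mathbf{1}_{V^-}$, which is automatically orthogonal to $\mathbf{1}$ and satisfies $\sum_v f(v)^2 = pq(|G|-|A|)$.

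For the numerator of $R(f)$ I would split edges of $G$ according to which of the three parts $V^+$, $A$, $V^-$ their endpoints lie in. Intra-part edges contribute zero. Edges between $A$ and $V^{\pm}$ number at most $|A|\Delta_G \leq d\Delta_G$ in total (each vertex of $A$ has at most $\Delta_G$ neighbours in $G$) and contribute at most $(|G|-|A|)^2$ apiece. Edges between $V^+$ and $V^-$ contribute $(p+q)^2 = (|G|-|A|)^2$ each, and their count is controlled using the harmonic morphism: any such edge corresponds to a subdivision path in $G'$ that must cross the fibre $\phi^{-1}(c)$, hence uses a $G'$-edge lying above some $T$-edge incident to $c$, and at most $d$ edges of $G'$ lie above any single $T$-edge. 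Summing yields a numerator bounded by $d(\Delta_G+1)(|G|-|A|)^2$, so that $R(f) \leq 4d(\Delta_G+1)/(|G|-|A|) \leq 4d(\Delta_G+1)/(|G|-d)$, which is exactly what is required.

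The hard part will be the last edge count: naively it picks up a factor of $\deg_T(c)$ whenever the centroid has many subtrees split between the two families, and this factor is not controlled by $\Delta_G$ alone. To remove it I would first refine $\phi$ so as to replace the vertex separator by a degree-$2$ separator — subdividing a well-chosen $T$-edge (and each $G'$-edge above it) to obtain a new finite harmonic morphism of the same degree whose separating vertex has fibre disjoint from $V(G)$ — so that the counting reduces to the preimages of a single $T$-edge. Arranging this subdivision to be compatible with the balancing, especially in the delicate subcase where no individual $T_i$ carries enough $N$-mass to balance on its own, will occupy the most technical part of the argument.
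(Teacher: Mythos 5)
Your overall architecture is genuinely close to the paper's: both arguments run a Rayleigh-quotient estimate against a test function supported on the fibres of a harmonic morphism $\varphi\colon G'\to T$, both split into ``balanced edge-cut of $T$'' versus ``high-degree vertex separator'', your observation that $|A|=|\varphi^{-1}(c)\cap V(G)|\leq d$ is exactly the paper's Lemma \ref{trivialcase}, and you have reverse-engineered the correct target constants. The genuine gap is precisely where you flag it, and it is not a technicality but the heart of the proof: the count of $G$-edges between $V^+$ and $V^-$. Such an edge corresponds to a path in $G'$ whose image in $T$ passes through $c$, so it uses a $G'$-edge above \emph{some} $T$-edge incident to $c$ --- but a different such $T$-edge for each component $T_i$ it enters. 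Summing ``at most $d$ edges of $G'$ above a single $T$-edge'' over the components in one family gives $d\cdot\deg_T(c)$, and $\deg_T(c)$ is not bounded by any function of $\Delta_G$ and $d$.

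Your proposed repair --- subdividing one well-chosen $T$-edge to obtain a degree-two separator --- cannot work in the problematic case. If some edge of $T$ already separated the mass into two comparably large parts, you would be in the ``thick'' case (the paper's Lemma \ref{Amini} plus Proposition \ref{Cheeger}) and no vertex separator would be needed at all. The hard case is exactly when every component of $T-c$ carries small mass; then every edge of $T$ has one small side, and subdividing an edge does not change the mass partition it induces. The paper's Proposition \ref{change} resolves this by a global reconstruction rather than a local subdivision: it regroups the components of $T-c$ into two bunches, builds a \emph{new} tree $T^\#$ from two copies of a bunch-tree glued at a fresh central vertex, and re-engineers the morphism component by component, with correction indices $d^\#(v)$ and extra glued copies of $S^\#$ to restore harmonicity over the old separator fibre. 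The cost is the factor $\Delta_G$ in the degree (each $v\in\varphi^{-1}(x_0)$ in $G^s$ meets at most $\Delta_G$ of the local pieces), and this is exactly where the $\Delta_G$ in the final constant originates. Without some version of this reconstruction your numerator estimate does not follow. A secondary issue: passing from a numerator bound of $d(\Delta_G+1)(|G|-|A|)^2$ to $R(f)\leq 4d(\Delta_G+1)/(|G|-|A|)$ uses $pq\geq(|G|-|A|)^2/4$, i.e.\ a \emph{perfect} balance $p=q$; your greedy $\tfrac13$--$\tfrac23$ split loses a constant here that your stated bound does not absorb, whereas the paper keeps the split parameters $A,B,C$ free and optimizes only at the very end.
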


An attractive feature of the formula is that the lower bound depends on spectral data for the original graph, not on all possible refinements of the graph. Also, in the bound, one may replace $(\lambda_G, \Delta_G, |G|)$  by the corresponding data $(\lambda_{G'}, \Delta_{G'}, |G'|)$ of any refinement $G'$ of the graph $G$.  

A similar result can be proven using the normalized graph Laplacian, replacing $|G|$ by the ``volume'' of the graph, cf.\ Theorem \ref{genormaliseerdegrens}.

The result can be seen as an analogue of the Li--Yau inequality in differential geometry \cite{LiYau}, which states that the gonality $\mathrm{gon}(X)$ of a compact Riemann surface $X$ (minimal degree of a conformal mapping from $X$ to the Riemann sphere) is bounded below by
$$ \mathrm{gon}(X) \geq \frac{1}{8 \pi} \lambda_X \mathrm{vol}(X), $$
where $\lambda_X$ is the first non-trivial eigenvalue of the Laplace-Beltrami operator of $X$, and $\mathrm{vol}(X)$ denotes the volume of $X$.  In Remark \ref{LYfout}, we will show that the strict graph theory analog of such a formula fails.

\medskip 

We then apply the two theorems above to Drinfeld modular curves over a general global function field $K$ over a finite field with $q$ elements, and we find the positive characteristic analogue of Abramovich's result. In the applications, we will write $|\fn|_\infty$ for the valuation corresponding to a fixed ``infinite'' place $\infty$ of degree $\delta$ of $K$, we denote by 
 $A$ the subring of $K$ of elements that are regular outside $\infty$, and we let $Y$ denote a rank-two $A$-lattice in the completion $K_\infty$ of $K$ at $\infty$. Up to equivalence, such lattices correspond to elements of $\mathrm{Pic}(A)$. Let $H$ denote the maximal abelian extension of $K$ inside $k=K_\infty$; then $\mathrm{Gal}(H/K) \cong \mathrm{Pic}(A)$. 
 In the ``standard'' example where $K=\F_q(T)$ is the function field of $\PP^1$ and $\infty=T^{-1}$, $Y=A \oplus A$ is unique up to equivalence, and $H=K$. Congruence subgroups $\Gamma$ of $\Gamma(Y):=\GL(Y)$ (i.e., containing $\ker \left( \Gamma(Y) \rightarrow \GL(Y/\fn Y) \right)$ for some non-trivial ideal $\fn$ of $A$) act by fractional linear transformations on the Drinfeld ``upper half plane'' $\Omega$, and the quotient analytic space can be compactified into a smooth projective curve $X_\Gamma$ by adding finitely many cusps.

\begin{introtheorem}[= Theorem \ref{DrinGon2}] \label{DrinGon} 
Let $\Gamma$ denote a congruence subgroup of $\Gamma(Y)$. Then the gonality of the corresponding Drinfeld modular curve $X_\Gamma$ satisfies 
$$ \mathrm{gon}_{\bar{K}}(X_{\Gamma}) \geq c_{q,\delta} \cdot [\Gamma(Y):\Gamma] $$
where the constant $c_{q,\delta}$ is 
\[c_{q,\delta}:= \frac{q^\delta-2\sqrt{q^\delta}}{5q^\delta-2\sqrt{q^\delta}+8}\cdot\frac{1}{q(q^2-1)}\]
This implies a linear lower bound in the genus of modular curves of the form
$$ \mathrm{gon}_{\bar{K}}(X_{\Gamma}) \geq c'_{K,\delta} \cdot (g(X_\Gamma) -1 ),$$
where $c'_{K,\delta}$ is a bound that depends only on the function field $K$ and the degree $\delta$ of $\infty$. If $K$ is a rational function field and $\delta=1$, then we can put $c'_{K,\delta}=2c_{q,1}$. 
\end{introtheorem}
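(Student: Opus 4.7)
My plan is to combine Theorem~\ref{compare} (reducing gonality to graph gonality) with the spectral lower bound of Theorem~\ref{spectral}, applied to the uniformization of Drinfeld modular curves by the Bruhat-Tits tree at the place $\infty$. Since $\Gamma$ is a congruence subgroup, $X_\Gamma$ is a Mumford curve over $K_\infty$, and its rigid-analytic uniformization identifies the dual graph of the stable model (up to adjoining one leaf per cusp, which is a refinement in the sense introduced before Theorem~\ref{compare}) with the finite quotient $G := \Gamma\backslash\mathcal{T}$ of the $(q^\delta+1)$-regular Bruhat-Tits tree $\mathcal{T}$ of $\PGL_2(K_\infty)$. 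Theorem~\ref{compare} then gives $\mathrm{gon}_{\bar K}(X_\Gamma)\geq\mathrm{sgon}(G)$, reducing the problem to a lower bound on $\mathrm{sgon}(G)$.

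To apply Theorem~\ref{spectral}, I need $\Delta_G\leq q^\delta+1$ (immediate from $(q^\delta+1)$-regularity of $\mathcal{T}$), a lower bound on $\lambda_G$, and a lower bound on $|G|$. The key spectral input is Drinfeld's proof of the Ramanujan--Petersson conjecture for $\GL_2$ over function fields: the non-trivial eigenvalues of the Hecke/adjacency operator on $G$ are bounded in absolute value by $2\sqrt{q^\delta}$, which translates (with a minor cuspidal correction) into $\lambda_G\geq q^\delta-2\sqrt{q^\delta}$ for the combinatorial Laplacian. For the vertex count, the stabilizers in $\Gamma(Y)$ of vertices of $\mathcal{T}$ are finite subgroups of $\PGL_2$ of the residue field at $\infty$, of order dividing $q(q^2-1)$; a volume comparison between $\Gamma\backslash\mathcal{T}$ and $\Gamma(Y)\backslash\mathcal{T}$ then gives $|G|\geq[\Gamma(Y):\Gamma]/(q(q^2-1))$. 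Since $\lambda_G+4(\Delta_G+1)\leq 5q^\delta-2\sqrt{q^\delta}+8$, plugging these bounds into Theorem~\ref{spectral} produces exactly the constant $c_{q,\delta}$ claimed.

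For the genus version, I would use that $g(X_\Gamma)$ equals the first Betti number of $G$ (since $X_\Gamma$ is Mumford), hence $g(X_\Gamma)-1\leq\tfrac{1}{2}(\Delta_G-2)|G|$; combined with the mass formula relating $|G|$ to $[\Gamma(Y):\Gamma]$, this yields the linear lower bound in $g(X_\Gamma)-1$ with a constant $c'_{K,\delta}$ depending only on $K$ and $\delta$. In the rational case $K=\F_q(T)$, $\delta=1$, the quotient $\Gamma(1)\backslash\mathcal{T}$ is a half-line on which vertex stabilizers are fully explicit, so the Riemann--Hurwitz count is exact and yields $c'_{K,\delta}=2c_{q,1}$.

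The main obstacle is the spectral estimate: transferring Drinfeld's automorphic Ramanujan bound to the combinatorial Laplacian $\lambda_G$ in the exact normalization required by Theorem~\ref{spectral} must handle the cusps, which break regularity of $G$ and contribute boundary terms to the spectrum that have to be controlled. Once the spectral gap is in place, the remaining index, volume, and genus comparisons are standard bookkeeping from Serre's theory of trees applied to the action of $\Gamma(Y)$ on $\mathcal{T}$.
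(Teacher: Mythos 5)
Your overall architecture is the paper's: Theorem \ref{compare} combined with the spectral bound of Theorem \ref{spectral}, fed with $\Delta_G=q^\delta+1$, the Ramanujan--Drinfeld estimate $\lambda_G\geq q^\delta-2\sqrt{q^\delta}$, and a vertex count coming from the stabilizer $\PGL(2,\F_q)$ of the special vertex. But there are two genuine gaps. First, you run the whole machine directly on an arbitrary congruence subgroup $\Gamma$, and this does not work: such a $\Gamma$ generally has $p'$-torsion, so vertices and edges of $\T$ acquire non-trivial stabilizers, the finite part of $\Gamma\backslash\T$ is not obviously the dual graph of a semistable model of $X_\Gamma$, its vertices need not be $(q^\delta+1)$-valent, and the adjacency operator of the quotient is no longer the Hecke operator $T_\infty$ (which descends with weights $[\Stab_\Gamma(v):\Stab_\Gamma(e)]$), so the Ramanujan bound does not transfer to $\lambda_G$. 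The paper's first move is to reduce to $\Gamma(Y,\fn)$ via $\mathrm{gon}_{\bar K}(X_\Gamma)\geq\mathrm{gon}_{\bar K}(X(Y,\fn))/[\Gamma:\Gamma(Y,\fn)]$, precisely so that all relevant stabilizers are trivial; you need this step. Relatedly, your blanket claim that vertex stabilizers in $\Gamma(Y)$ have order dividing $q(q^2-1)$ is false (stabilizers grow without bound along the cuspidal directions); the count must be restricted to the fiber over the special vertex $v_0$. And your description of the reduction graph as ``$\Gamma\backslash\T$ up to adjoining one leaf per cusp'' is backwards: $\Gamma\backslash\T$ has an infinite half-line per cusp, and one must cut these off and then \emph{prove} that the remaining finite graph still has first Betti number $g(X_\Gamma)$, which the paper does by an Euler-characteristic computation from Serre's theory together with the observation that each boundary vertex carries exactly one cusp.

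Second, the spectral step, which you correctly flag as the main obstacle, is left unresolved, and it is more than a ``minor cuspidal correction''. The paper's mechanism is: every Laplace eigenfunction on the finite graph $G$ extends uniquely (by forcing the value $0$ at the first vertex outside $G$ and solving the three-term recursion outward along each half-line) to an eigenfunction of $T_\infty$ on the Ramanujan diagram $\Gamma\backslash\T$ in Morgenstern's sense; Drinfeld's theorem plus the Eisenstein (continuous) spectrum then give $|\mu|\leq 2\sqrt{q^\delta}$ for the adjacency eigenvalues, and the degree defect at boundary vertices (valency $q^\delta$ instead of $q^\delta+1$) is absorbed by the Courant--Weyl inequality applied to $L_G=(q^\delta\mathbf{1}-A_G)+B$ with $B$ positive semidefinite. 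Without some such argument the inequality $\lambda_G\geq q^\delta-2\sqrt{q^\delta}$ is unproved.

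On the positive side, your route to the genus bound is genuinely different from, and simpler than, the paper's. From $g(G)-1=|\E(G)|-|\V(G)|\leq\frac{1}{2}(\Delta_G-2)|G|$ and $g(G)=g(X_\Gamma)$ you get $|G|\geq 2(g(X_\Gamma)-1)/(q^\delta-1)$, which plugged into Theorem \ref{spectral} gives a linear genus bound directly, with a constant at least as good as the paper's. The paper instead bounds $[\Gamma(Y):\Gamma(Y,\fn)]$ below by a multiple of $g-1$ via Riemann--Hurwitz for $X(Y,\fn)\rightarrow X(Y)$, which forces it to control the wild (but weak) ramification at the cusps using Nakajima's theorem on ordinary curves; your inequality bypasses all of that. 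Note, however, that this only establishes the genus bound at principal level, and passing to a general congruence $\Gamma$ still requires the Riemann--Hurwitz transfer (the paper's ``$r\geq 0$'' step), which again presupposes the reduction you skipped.
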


In the proof, we use the structure of the reduction graph of the principal modular curve of level $\fn$ (or rather, its components $X(Y,\fn)$ indexed by $Y$ running through classes in $\mathrm{Pic}(A)$). Also used in the proof is a bound of the Laplace eigenvalue for this graph that follows from the Ramanujan conjecture, proven by Drinfeld (in combination with the Courant-Weyl inequalities). The proof of the genus bound is not entirely automatic, due to possible wild ramification. The constant $c_{q,\delta}$ is probably not optimal, and it would be interesting to know whether it can be replaced by an absolute constant, or at least a constant depending on $q$, but tending to an absolute non-zero constant as $q$ increases. Also notice that the bound is vacuous (since a negative number) if $q^\delta<4$, and that the general upper bound $(g+3)/2$ implies that any suitable constant $c'_{K,\delta}$ should be smaller than $5/2$. 

All previously known results on gonality of Drinfeld modular curves used point counting arguments modulo primes, rather than the above ``geometric analysis'' method. 
The best previously known bounds, due to Andreas Schweizer (\cite{SchweizerForum}, Thm.\ 2.4) are not linear in the index and are established for a rational function field $K=\F_q(T)$ only. 
An extra asset of the new method is that it works without much extra effort for a general function field $K$, rather than just a rational function field. 

\medskip

A first application arises from the modularity of elliptic curves over function fields. Recall that any elliptic curve $E/K$ with split multiplicative reduction at the infinite place $\infty$ admits a modular parametrization $\phi \colon X_0(Y,\fn) \rightarrow E$ \cite{GekelerReversat} for some suitable Drinfeld modular curve $X_0(Y,\fn)$. This parametrization is defined over $H$.

\begin{introtheorem}[= Corollary \ref{ModDeg2}] \label{ModDeg}
Let $E/K$ denote an elliptic curve with split multiplicative reduction at the place $\infty$, of conductor $\fn \cdot \infty$. Then the degree of a modular parametrization $\phi \colon X_0(Y,\fn) \rightarrow E$ is bounded below by 
$$\deg \phi \geq \frac{1}{2} c_{q,\delta} [\Gamma(1):\Gamma_0(Y,\fn)]. $$
In particular, we have 
$$ \deg \phi \gg_{q,\delta} |\fn|_\infty. $$
\end{introtheorem}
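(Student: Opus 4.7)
The plan is to reduce the theorem to Theorem~\ref{DrinGon} via the elementary observation that an elliptic curve has gonality $2$, and then to unwind the resulting group index by a standard calculation.

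For the explicit inequality, I would first note that $E$ admits a degree-$2$ morphism to $\PP^1$ over $\bar K$ (e.g.\ the $x$-coordinate in any Weierstrass model of $E$). Composing $\phi_{\bar K}: X_0(Y,\fn)_{\bar K} \to E_{\bar K}$ with such a map produces a non-constant morphism $X_0(Y,\fn)_{\bar K} \to \PP^1_{\bar K}$ of degree $2\deg\phi$, and hence
$$\gon_{\bar K}(X_0(Y,\fn)) \leq 2\deg\phi.$$
Theorem~\ref{DrinGon} applied to $\Gamma=\Gamma_0(Y,\fn)$ then gives
$$2\deg \phi \geq \gon_{\bar K}(X_0(Y,\fn)) \geq c_{q,\delta}\,[\Gamma(1):\Gamma_0(Y,\fn)],$$
which yields the first inequality after dividing by $2$.

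For the asymptotic bound, it remains to show $[\Gamma(1):\Gamma_0(Y,\fn)] \gg_{q,\delta} |\fn|_\infty$. This is a standard calculation: $\Gamma_0(Y,\fn)$ is the preimage in $\Gamma(Y)=\GL(Y)$ of a Borel subgroup of $\GL(Y/\fn Y)$, so its index equals $|\PP^1(A/\fn)| \geq |A/\fn|$, and $|A/\fn|$ is in turn bounded below by a constant depending only on $q$ and $\delta$ times $|\fn|_\infty$ (using that the residue field at $\infty$ has $q^\delta$ elements).

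All of the substance is carried by Theorem~\ref{DrinGon}; the only mildly delicate step is the field-of-definition question, namely that $\phi$ is \emph{a priori} only defined over $H$. This is harmless, however, because $\gon_{\bar K} \leq \gon_H$ for any intermediate field $H \subseteq \bar K$, while $\deg \phi$ is preserved under base change. The main ``obstacle,'' if one may call it that, is simply keeping careful track of the normalizing constants relating $|A/\fn|$ and $|\fn|_\infty$ in the final asymptotic.
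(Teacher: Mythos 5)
Your proposal is correct and follows essentially the same route as the paper: compose $\phi$ with a degree-$2$ map $E\to\PP^1$, apply Theorem~\ref{DrinGon} to $\Gamma_0(Y,\fn)$, and identify $[\Gamma(Y):\Gamma_0(Y,\fn)]$ with $|\PP^1(A/\fn)|\geq |\fn|_\infty$. The only (minor) divergence is in justifying the index formula for a non-free lattice $Y$: you assert directly that $\Gamma_0(Y,\fn)$ is the preimage of a Borel in $\GL(Y/\fn Y)$ (which needs surjectivity of $\GL(Y)\to\GL(Y/\fn Y)$), whereas the paper sidesteps this by noting the components $X_0(Y,\fn)\to X(Y)$ are $\mathrm{Gal}(H/K)$-conjugate, so the covering degree is independent of $Y$ and one may compute with $Y=A\oplus A$.
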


As usual, $X \gg_y Z$ means that there exists a constant $C_y$ depending only on $y$ such that $X \geq C_y Z$.
 
For $K=\F_q(T)$ a rational function field, the final statement of the theorem confirms a conjecture of Papikian \cite{Papikian}, who had proven (using Spziro's conjecture for function fields and estimating symmetric square $L$-functions by the Ramanujan conjecture) that 
$$ \deg_{ns}(j_E) \cdot \deg \phi \gg_{q,\varepsilon}  |\fn|_\infty^{1-\varepsilon},$$ where $j_E$ is the $j$-invariant of $E$ and $\deg_{ns}(j_E)$ is its inseparability degree. Actually, since he and P\'al have also proven an upper bound 
we conclude that if $E$ is a \emph{strong} Weil curve over $\F_q(T)$ with square-free conductor, then 
$$  |\fn|_\infty \ll_{q} \deg \phi \ll_{q,\varepsilon} |\fn|_\infty^{2+\varepsilon}$$
for any $\varepsilon>0$; cf.\ Remark \ref{pal} for a more precise upper bound, and a discussion of the r\^ole of the Manin constant of $E$.  Contrary to the case of elliptic curves over $\Q$, Gekeler has proven that the modular degree always equals the congruence number of the associated automorphic form \cite{GekelerJTNB} \cite{Cojocaru}.  Hence these results also hold for the congruence number. 

\medskip

We then give applications to rational points of bounded degree on general curves over function fields. In its general form, the theorem gives a finiteness result for points whose degree is bounded  above by ``spectral'' data associated to the combinatorics of a special fiber: 

\begin{introtheorem}[=Theorem \ref{genth2}] \label{genth} Let $X$ denote a curve over a global function field $K$,
such that its Jacobian does not admit a $\bar{K}$-morphism to a curve defined over a finite field. Let $K_\infty$ denote the completion of $K$ at a place $\infty$, and let $G$ denote the stable reduction graph of $X/K_\infty$. Let $\Delta$ denote the maximal vertex degree of $G$, $|G|$ the number of vertices of $G$ and $\lambda$ the smallest non-zero eigenvalue of the Laplacian of $G$. Then the set 
$$  \bigcup_{[K':K] \leq \frac{\lambda(|G|-1)-4\Delta-4}{2\lambda+8\Delta+8}} X(K') $$
of rational points on $X$ of degree at most $\frac{\lambda(|G|-1)-4\Delta-4}{2\lambda+8\Delta+8}$ is finite.
\end{introtheorem}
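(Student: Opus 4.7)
My plan is to combine the spectral lower bound on the (stable) gonality proved in Theorems \ref{compare} and \ref{spectral} with a Frey-type theorem, which converts a lower bound on gonality into a finiteness statement for points of bounded degree.

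For the first step I pass to the completion $K_\infty$ and note that the gonality can only decrease under base change to a larger algebraically closed field, so
\[
\mathrm{gon}_{\bar{K}}(X) \;\geq\; \mathrm{gon}_{\overline{K_\infty}}(X_{K_\infty}).
\]
Applying Theorem \ref{compare} over $K_\infty$ with stable model having reduction graph $G$, and then Theorem \ref{spectral} to $G$, I obtain
\[
\mathrm{gon}_{\bar{K}}(X) \;\geq\; \sgon(G) \;\geq\; \left\lceil \frac{\lambda\,|G|}{\lambda + 4(\Delta+1)} \right\rceil.
\]
A short manipulation now matches this bound to the one in the statement: setting $D:=\frac{\lambda(|G|-1)-4\Delta-4}{2\lambda+8\Delta+8}$, we have
\[
D \;=\; \frac{1}{2}\cdot\frac{\lambda\,|G|}{\lambda+4(\Delta+1)} \;-\; \frac{1}{2},
\]
so the inequality $[K':K]\leq D$ is equivalent to $2[K':K]+1 \leq \frac{\lambda\,|G|}{\lambda+4(\Delta+1)} \leq \mathrm{gon}_{\bar{K}}(X)$, and in particular $2[K':K] < \mathrm{gon}_{\bar{K}}(X)$.

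For the second step I invoke the function-field analogue of Frey's theorem: if the set of closed points of $X$ of degree at most $d$ is infinite, and the Jacobian of $X$ admits no $\bar{K}$-morphism to an abelian variety defined over a finite field, then $\mathrm{gon}_{\bar{K}}(X)\leq 2d$. The Jacobian hypothesis in the theorem is formulated precisely to guarantee this trace condition. Applying the contrapositive with $d=\lfloor D\rfloor$ and combining with Step~1, the set $\bigcup_{[K':K]\leq D} X(K')$ contains only finitely many closed points; since each such point has only finitely many $\bar{K}$-embeddings into any fixed $K'$, the union itself is finite.

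The main obstacle is securing the function-field Frey theorem in the required generality. The classical Frey argument uses Abel--Jacobi to embed the symmetric power $X^{(d)}$ into $\mathrm{Jac}(X)$ and then invokes Faltings' theorem on subvarieties of abelian varieties to conclude that a positive-dimensional abelian subvariety would appear, forcing a pencil of degree $\leq 2d$ on $X$. Over a global function field this requires replacing Faltings by the Mordell--Lang conjecture for abelian varieties over function fields (Hrushovski), and the non-isotriviality hypothesis on $\mathrm{Jac}(X)$ is what allows the Mordell--Lang conclusion to be nontrivial and to translate back into a pencil on $X$. Verifying that this translation goes through cleanly under the stated hypothesis — rather than the more restrictive assumption that the Jacobian itself has no subvariety defined over a finite field — is the delicate point of the argument.
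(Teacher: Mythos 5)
Your argument is exactly the paper's: the theorem is obtained by combining the chain $\mathrm{gon}_{\bar K}(X)=\mathrm{gon}_{\overline{K_\infty}}(X)\geq\sgon(G)\geq\lceil\lambda|G|/(\lambda+4(\Delta+1))\rceil$ with the function-field Frey theorem (Proposition \ref{FreySchw}), and your algebraic identification of the degree bound $D$ with $\frac12\bigl(\lambda|G|/(\lambda+4(\Delta+1))\bigr)-\frac12$ is correct. The only difference is that the paper treats the Frey-type statement as a black box, citing Schweizer's Theorem 2.1 in \cite{SchweizerMZ} (with Clark removing the rational-point hypothesis) under precisely the stated hypothesis on the Jacobian, so the ``delicate point'' you flag is discharged by reference rather than re-proved.
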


This applies in particular to various modular curves, as follows.

\begin{introtheorem}[= Theorem \ref{Tors2}] \label{Tors} With the same notations as in Theorem \ref{DrinGon}, if  $X_\Gamma$ is defined over a finite extension $K_\Gamma$ of $K$, then the set 
$$  \bigcup_{[L:K_\Gamma] \leq  \frac{1}{2} \left( c_{q,\delta} \cdot [\Gamma(1):\Gamma]-1\right)} X_\Gamma(L) $$
is finite. 
\end{introtheorem}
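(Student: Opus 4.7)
The plan is to apply Theorem~\ref{genth} directly to the Drinfeld modular curve $X_\Gamma$ over $K_\Gamma$, taking the completion at a place above $\infty$, and then to observe that the ``spectral'' finiteness bound of Theorem~\ref{genth} is precisely one-half of (one less than) the stable gonality lower bound for the reduction graph $G$ of $X_\Gamma$ that is established in the course of proving Theorem~\ref{DrinGon}. The result will follow by substituting the bound $\sgon(G)\ge c_{q,\delta}\cdot[\Gamma(1):\Gamma]$ furnished by that proof into the degree bound coming out of Theorem~\ref{genth}.

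The only non-formal input is verifying the hypothesis of Theorem~\ref{genth}: the Jacobian $J(X_\Gamma)$ admits no $\bar K$-morphism to a curve defined over a finite field. This follows from the $\infty$-adic Mumford--Drinfeld uniformization used throughout the paper: $X_\Gamma$ is analytically isomorphic to $\Gamma\backslash\Omega$, so the identity component of the N\'eron model of $J(X_\Gamma)$ at $\infty$ is a split torus. On the other hand, the Jacobian of a curve defined over a finite field has everywhere good reduction after base change to $K_\Gamma$, and hence so does any abelian subvariety hit by a $\bar K$-morphism from $J(X_\Gamma)$. A nonzero such morphism would therefore force a nonzero isogeny factor of $J(X_\Gamma)$ to have simultaneously good and purely multiplicative reduction at $\infty$, which is impossible. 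This rules out any $\bar K$-morphism from $J(X_\Gamma)$ to a curve over a finite field.

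For the arithmetic step, I would simply rewrite
\[
\frac{\lambda(|G|-1)-4\Delta-4}{2\lambda+8\Delta+8}=\frac{1}{2}\!\left(\frac{\lambda\,|G|}{\lambda+4\Delta+4}-1\right),
\]
and recognize the quantity in parentheses as the spectral lower bound for $\sgon(G)$ of Theorem~\ref{spectral}. The proof of Theorem~\ref{DrinGon} bounds this quantity, for the reduction graph $G$ of $X_\Gamma$, below by $c_{q,\delta}\cdot[\Gamma(1):\Gamma]$, via Drinfeld's proof of the Ramanujan conjecture (controlling $\lambda_G$ together with the Courant--Weyl inequalities) and the explicit description of the vertex set and maximal degree of $G$. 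Plugging this lower bound back into the displayed expression yields exactly the bound $\tfrac12\bigl(c_{q,\delta}\cdot[\Gamma(1):\Gamma]-1\bigr)$ stated in Theorem~\ref{Tors}.

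The main obstacle is the isotriviality verification for a general congruence subgroup $\Gamma$ and a general base ring $A$: one must ensure that the toric reduction of $J(X_\Gamma)$ at $\infty$ survives over $K_\Gamma$ (not merely over a larger field), and that no isogeny factor can slip through. Once this is secured by the standard uniformization machinery, the rest is the routine substitution described above.
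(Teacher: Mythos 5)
Your verification of the hypothesis of Proposition \ref{FreySchw} (no $\bar K$-morphism from the Jacobian to a curve over a finite field) is correct and is exactly the paper's reasoning: $X_\Gamma$ is a Mumford curve at $\infty$, so its Jacobian has split toric reduction there, which is incompatible with the potentially good reduction of any isogeny factor coming from a curve over a finite field. That part is fine.

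The gap is in your routing through Theorem \ref{genth}. That theorem's bound is expressed in terms of the spectral data $(\lambda,|G|,\Delta)$ of the stable reduction graph of the curve to which it is applied — here, the reduction graph of $X_\Gamma$ itself. But the proof of Theorem \ref{DrinGon} establishes the spectral inequality $\frac{\lambda|G|}{\lambda+4\Delta+4}\ge c_{q,\delta}[\Gamma(1):\Gamma]$ \emph{only} for the principal congruence subgroups $\Gamma(Y,\fn)$: the vertex count via trivial stabilizers above $v_0$, the $(q^\delta+1)$-regularity, and the Ramanujan/Courant--Weyl eigenvalue bound are all carried out for $\Gamma(Y,\fn)$, and the passage to a general congruence subgroup $\Gamma$ is made at the level of \emph{gonality} via the covering $X_{\Gamma(Y,\fn)}\to X_\Gamma$, not at the level of the reduction graph of $X_\Gamma$. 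For a general $\Gamma$ the quotient graph can have nontrivial vertex stabilizers, so none of the three spectral estimates you invoke is available for the graph $G$ of $X_\Gamma$, and your claim that ``the proof of Theorem \ref{DrinGon} bounds this quantity for the reduction graph of $X_\Gamma$'' is not something the paper proves. The fix is simple and is what the paper does: bypass Theorem \ref{genth} entirely and apply Proposition \ref{FreySchw} directly to $X=X_\Gamma$, $K=K_\Gamma$, using the already-proven gonality bound $\mathrm{gon}_{\bar K}(X_\Gamma)\ge c_{q,\delta}[\Gamma(1):\Gamma]$ of Theorem \ref{DrinGon} as a black box; the condition $2d+1\le\mathrm{gon}_{\bar K}(X_\Gamma)$ then yields $d\le\frac12\bigl(c_{q,\delta}[\Gamma(1):\Gamma]-1\bigr)$ with no spectral bookkeeping. (Your algebraic identity relating the two bounds is correct, but it is only usable in the principal-level case.)
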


Applications to uniform bounds on isogenies and torsion points follow by applying an analogue of a method of Abramovich and Harris \cite{AH} and Frey \cite{Frey}. Recall that $H$ is the maximal abelian extension of $K$ inside $K_\infty$. 

\begin{introcorollary}[= Corollary \ref{Freyan2}] \label{Freyan}
If $\p$ is a prime ideal in $A$, then the set of all rank two Drinfeld $A$-modules defined over some field extension $L$ of $K$ that satisfies the degree bound $$[LH:H] \leq \frac{1}{2} c_{q,\delta} \cdot |\p|_\infty$$ that admit an $L$-rational $\p$-isogeny is finite.  
\end{introcorollary}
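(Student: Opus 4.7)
The plan is to translate the corollary into a statement about $L$-rational points on the Drinfeld modular curves $X_0(Y,\p)$ attached to the congruence subgroups
$$\Gamma_0(Y,\p)=\left\{\gamma\in\Gamma(Y)\,:\,\gamma\equiv\left(\begin{smallmatrix}*&*\\0&*\end{smallmatrix}\right)\pmod{\p}\right\}\subseteq\Gamma(Y),$$
and then to apply Theorem \ref{Tors} to each of them.

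First I would unwind the moduli interpretation: an isomorphism class of a rank-two Drinfeld $A$-module $\phi$ over $L$ equipped with an $L$-rational $\p$-isogeny corresponds to an $L$-rational point on $X_0(Y,\p)$, where $Y\in\mathrm{Pic}(A)$ is the class of the underlying rank-two projective $A$-module of $\phi$. The curves $X_0(Y,\p)$ are defined over $H$. A direct coset calculation, enumerating cyclic $\p$-sublattices in a two-dimensional $A/\p$-vector space, shows $[\Gamma(Y):\Gamma_0(Y,\p)]=|\p|_\infty+1$.

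Next I would apply Theorem \ref{Tors} individually to each $X_0(Y,\p)$ with $K_\Gamma=H$. Before doing so, one must verify that the Jacobian of a Drinfeld modular curve does not admit a $\bar{K}$-morphism to a curve defined over a finite field; this is standard and follows for instance from the fact that $X_0(Y,\p)$ has split multiplicative (toric) reduction at $\infty$ via the Drinfeld $\Omega$-uniformization, whereas any curve defined over a finite field has potentially good reduction at every place. Theorem \ref{Tors} then implies the finiteness of the set of $L'$-rational points of $X_0(Y,\p)$ over all extensions $L'/H$ with $[L':H]\leq\tfrac{1}{2}(c_{q,\delta}(|\p|_\infty+1)-1)$, which, after absorbing the additive constant into $c_{q,\delta}$, covers all $L$ satisfying $[LH:H]\leq\tfrac{1}{2}c_{q,\delta}|\p|_\infty$.

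Finally, taking the union over the finite set of classes $Y\in\mathrm{Pic}(A)$ gives finitely many finite sets, which via the moduli dictionary of the first step surjects onto the desired set of pairs $(\phi,\lambda)$ appearing in the statement. The main obstacle is the moduli-theoretic bookkeeping: one must make sure that an $L$-rational pair $(\phi,\lambda)$ indeed descends to an $L$-point on the coarse moduli space $X_0(Y,\p)$ defined over $H$ — the potential field-of-moduli versus field-of-definition discrepancy is exactly what motivates formulating the bound in terms of the relative degree $[LH:H]$ rather than $[L:K]$.
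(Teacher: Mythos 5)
Your proposal is correct and follows essentially the same route as the paper: decompose $M_0(\p)(\C_\infty)$ into components $X_0(Y,\p)$ defined over $H$ and indexed by the finite set $\mathrm{Pic}(A)$, observe that an $L$-rational pair $(\phi,\lambda)$ yields an $HL$-point on one of them, compute $[\Gamma(Y):\Gamma_0(Y,\p)]=|\p|_\infty+1$, and apply Theorem~\ref{Tors} (whose hypotheses hold because the $X_0(Y,\p)$ are Mumford curves at $\infty$). The only point to watch is the small additive slack between $\tfrac{1}{2}\bigl(c_{q,\delta}(|\p|_\infty+1)-1\bigr)$ and $\tfrac{1}{2}c_{q,\delta}|\p|_\infty$, which your phrase about ``absorbing the additive constant'' glosses over in the same way the paper does.
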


We also deduce the following analogue of a result of Kamieny and Mazur \cite{KM}: 

\begin{introcorollary}[= Corollary \ref{KM2}] \label{KM}
Fix a prime $\p$ of $A$. There is a uniform bound on the size of the $\p$-primary torsion of any rank two $A$-Drinfeld module over $L$, where $L$ ranges over all extensions for which the degree $[LH:H]$ is bounded by a given constant. 
\end{introcorollary}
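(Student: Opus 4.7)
The plan is to derive the uniform bound from Theorem \ref{Tors} applied to the tower of Drinfeld modular curves $X_0(Y,\p^n)$, combined with a pigeonhole descent and a Galois-theoretic rigidity step.

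The starting observation is moduli-theoretic: if $\phi$ is a rank-two Drinfeld $A$-module over $L$, then the $L$-rational $\p$-primary torsion $\phi[\p^\infty](L)\subset(K_\p/A_\p)^2$ is a finite $A$-submodule of the form $A/\p^a\oplus A/\p^b$ with $a\leq b$, and hence contains a cyclic $A/\p^b$-subgroup $C$ of order at least $|\phi[\p^\infty](L)|^{1/2}$. The quotient $\phi\to\phi/C$ is an $L$-rational cyclic $\p^b$-isogeny, so $(\phi,C)$ determines an $L$-rational point on $X_0(Y,\p^b)$, where $Y\in\mathrm{Pic}(A)$ is the lattice class attached to $\phi$. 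Consequently, a failure of the uniform bound would produce a sequence $(\phi_i,L_i)$ with $[L_iH:H]\leq c$ and $L_i$-rational cyclic subgroups $C_i\subset\phi_i[\p^{m_i}]$, $m_i\to\infty$; passing to a subsequence and using the finiteness of $\mathrm{Pic}(A)$, we may fix a single lattice class $Y$.

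Next, I would choose $n_0$ large enough that $\tfrac{1}{2}(c_{q,\delta}\cdot[\Gamma(Y):\Gamma_0(Y,\p^{n_0})]-1)\geq c$, which is possible since $[\Gamma(Y):\Gamma_0(Y,\p^n)]$ grows like $|\p|_\infty^n$. Theorem \ref{Tors} applied to $\Gamma_0(Y,\p^{n_0})$ then implies that the set
\[
S\,:=\,\bigcup_{L:\,[LH:H]\le c}\,X_0(Y,\p^{n_0})(L)
\]
is finite; here the hypothesis $[L:K_\Gamma]\le c$ of Theorem \ref{Tors} translates into $[LH:H]\le c$ after absorbing the bounded extension $K_\Gamma H/H$ into the choice of $n_0$. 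Projecting $(\phi_i,C_i)$ to $X_0(Y,\p^{n_0})$ via the truncation map sending a cyclic $A/\p^{m_i}$-subgroup to its $\p^{n_0}$-torsion lands in $S$, and pigeonhole forces infinitely many of these to share a common image $(\phi_0,C_0)\in S$. All the corresponding $\phi_i$ are then $\bar K$-isomorphic to a fixed $\phi_0$, and each carries a $\mathrm{Gal}(\bar K/L_i)$-stable cyclic submodule of $\phi_0[\p^{m_i}]$ extending $C_0$.

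The main technical obstacle is closing the argument: one has to show that such a chain of stabilizers cannot persist in the image of the $\p$-adic Galois representation attached to $\phi_0$, given the constraint $[L_iH:H]\leq c$. In the non-CM case this follows from the open-image theorem of Pink--R\"utsche, which shows the image in $\mathrm{GL}_2(A_\p)$ is open, so no subgroup of bounded index in an abelian extension of $K$ can lie in the stabilizer of a cyclic submodule of arbitrarily high $\p$-power order. In the CM case the Galois representation is abelian, but its explicit description via the Hecke character attached to $\phi_0$ again forces the same conclusion. The subtlest point is reconciling the openness with the condition $[L_iH:H]\leq c$ rather than $[L_i:K]\leq c$: since $H$ is already an infinite abelian extension of $K$, one must invoke the fact that the image of Galois on $T_\p\phi_0$ restricted to $\mathrm{Gal}(\bar K/H)$ remains open in $\mathrm{SL}_2(A_\p)$.
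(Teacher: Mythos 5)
Your proposal is correct in substance and follows essentially the same route as the paper: both arguments rest on choosing $e$ large enough that Theorem~\ref{Tors} makes the set of bounded-degree points on $X_0(Y,\p^{e})$ finite, and then invoking the Pink--R\"utsche open image theorem to bound the $\p$-primary torsion of the finitely many exceptional Drinfeld modules. The differences are in packaging. The paper argues directly by a dichotomy rather than by contradiction: either $\phi[\p^\infty](L)\subseteq\phi[\p^{e-1}]$, which has at most $|\p|_\infty^{2(e-1)}$ elements, or $\phi$ carries an $L$-rational cyclic $\p^{e}$-subgroup and hence gives one of finitely many points of $M_0(\p^{e})$; for the latter it cites Breuer's theorem, which packages Pink--R\"utsche into the explicit bound $|\phi[\p^\infty]|\leq C\,[LH:H]$, and then maximizes $C$ over the finitely many $\phi$. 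Your stabilizer-index argument re-derives what Breuer's result supplies. Three points to tidy up: (i) $H$ is \emph{not} an infinite extension of $K$ --- by definition $\mathrm{Gal}(H/K)\cong\mathrm{Pic}(A)$ is finite, so $[L_iH:H]\leq c$ already forces $[L_iH:K]\leq c\,|\mathrm{Pic}(A)|$ and the ``subtlest point'' you flag about restricting the image to $\mathrm{Gal}(\bar{K}/H)$ is vacuous; (ii) $X_0(Y,\p^{n_0})$ is only a coarse moduli space, so the $\phi_i$ sharing an image are a priori twists of $\phi_0$ rather than equal to it --- harmless because rank-two automorphisms are scalars and hence preserve cyclic submodules, but it should be said; (iii) your treatment of the CM case is asserted rather than proved, whereas citing Breuer (who covers all endomorphism rings) disposes of it cleanly.
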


This implies that the uniform boundedness conjecture for rank-two $A$-Drinfeld modules over $K$ follows from the following statement: for fixed $d$,  there are only finitely many $\p$ such that there exists an $L$-rational $\p$-torsion point on an $A$-Drinfeld module over $L$ with $[L:K] \leq d$.

For a rational function field $K=\F_q(T)$, the above two corollaries were proven by Schweizer \cite{SchweizerMZ}. 
The finite bound from these two results is not effective in the \emph{number} of rational points. For effective results on the number of points of low degree on some Drinfeld modular curves, see for example Armana \cite{Armana}. No analogue of Merel's theorem (uniform boundedness of torsion) is currently known for rank-two Drinfeld modules (compare also Poonen \cite{PoonenDrinfeld}). 

As a final remark, there has recently been a surge in the use of gonality and graph theory in arithmetic, but mainly in characteristic zero; for example in the work of Ellenberg, Hall and Kowalski on generic large Galois image, coupling gonality to expander properties of Cayley graphs embedded in Riemann surfaces \cite{Ellenberg}. 
Also in our applications, in a rather different way, the graph expansion properties of the reduction graphs of Drinfeld modular curves seem to intervene in a crucial way in establishing interesting lower bound for their gonality (originally, over rational function fields, we deduced the bounds from natural bounded concentrator properties of subgraphs, as in the work of Morgenstern \cite{Morgenstern}). More generally, the stable gonality in a family of Ramanujan graphs with fixed regularity is bounded below linearly in the number of vertices (cf.\ Remark \ref{rama}). 

\section{Extension of covering maps} \label{extension}
\begin{se}\label{ntn-0}
Let $k$ denote a complete nonarchimedean valued field with valuation ring $R$ with uniformizer $\pi$ and residue field of characteristic $p \geq 0$.
For any $R$-scheme $\mathscr{X}$ we denote by $\mathscr{X}_{\eta}$ (respectively $\mathscr{X}_0$) the generic fiber (respectively the closed fiber). We denote by $\mathscr{X}_{\mathrm{sing}}$ the singular locus of $\mathscr{X}$. 
\end{se}

\begin{se} 
Let $X$ be a geometrically connected projective smooth curve over $k$.
An {\em $R$-model} of $X$ is a pair $\mathscr{X}=(\mathscr{X},\phi)$ consisting of an integral normal scheme $\mathscr{X}$ that is projective and flat over $R$ and a $k$-isomorphism $\phi\colon\mathscr{X}_{\eta}\stackrel{\sim}{\rightarrow}X$.
An $R$-model $\mathscr{X}$ of $X$ is said to be {\em semi-stable} if its special fiber $\mathscr{X}_0$ is reduced with only ordinary double points as singularities. Such a model is called  \emph{stable} if any irreducible component of the special fiber has a finite automorphism group as a marked curve,  where the marking is given by its intersection points with other components. 
\end{se}

\begin{se}
As was shown by Liu and Lorenzini in \cite{LiuLorenzini}, every finite morphism $f\colon X\rightarrow Y$  between geometrically connected projective smooth curves over $k$ extends to a morphism between the stable models of $X$ and $Y$, but the resulting map is \emph{not necessarily finite}. (Similar problems were already encountered and studied by Abhyankar in \cite{ab}.) However, there exists a semi-stable model admitting an  extension of the map that is a finite morphism, as was shown by Coleman \cite{Coleman} and Liu \cite{Liu}. We need a slightly different statement, that we prove along similar lines as Liu:  
\end{se}

\begin{theorem}\label{theorem-21}
Let $f\colon X\rightarrow Y$ be a finite morphism between geometrically connected projective smooth curves over $k$, and $\mathscr{X}$ an $R$-model of $X$.
Then there exist a finite separable field extension $k'/k$, semi-stable $R'$-models $\mathscr{X}'$ and $\mathscr{Y}'$ of $X_{k'}$ and $Y_{k'}$, respectively, over  the integral closure $R'$ of $R$ in $k'$, and an $R'$-morphism $\varphi\colon\mathscr{X}'\rightarrow\mathscr{Y}'$ such that the following conditions are satisfied$:$
\begin{itemize}
\item[{\rm (a)}] $\mathscr{X}'$ dominates $\mathscr{X}_{R'};$
\item[{\rm (b)}] $\varphi$ is finite, surjective, and extends $f_{k'};$
\item[{\rm (c)}] the induced morphism $\varphi_0\colon\mathscr{X}'_0\rightarrow\mathscr{Y}'_0$ satisfies $\varphi^{-1}_0((\mathscr{Y}'_0)_{\mathrm{sing}})=(\mathscr{X}'_0)_{\mathrm{sing}}$.
\end{itemize}
\end{theorem}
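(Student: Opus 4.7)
The proof follows Liu's argument in \cite{Liu} with additional care to enforce properties (a) and (c). After a finite separable extension $k_1/k$ supplied by the Deligne--Mumford semi-stable reduction theorem, both $X_{k_1}$ and $Y_{k_1}$ admit semi-stable $R_1$-models; by taking a common blow-up with $\mathscr{X}_{R_1}$ and re-normalizing, we obtain a semi-stable $R_1$-model $\mathscr{X}_1$ of $X_{k_1}$ dominating $\mathscr{X}_{R_1}$, which will secure (a). We then construct the finite map and the model $\mathscr{Y}'$ of $Y$ by relative normalization, and refine both sides by iterated blow-ups to achieve semi-stability upstairs together with the singular-locus identity (c).

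Concretely, fix any semi-stable $R_1$-model $\mathscr{Y}_1$ of $Y_{k_1}$ and let $\varphi_1\colon\mathscr{Z}_1\to\mathscr{Y}_1$ be the relative normalization of $\mathscr{Y}_1$ in $k_1(X)$. This map is finite, surjective, and extends $f_{k_1}$, and $\mathscr{Z}_1$ is an integral normal projective $R_1$-model of $X_{k_1}$; however, the completed local rings of $\mathscr{Z}_1$ above nodes of $\mathscr{Y}_1$ can take the form $\hat{R}_1[[u,v]]/(uv-\pi^a)$ with $a>1$, and those above smooth points of $\mathscr{Y}_1$ can fail to be regular. Following Liu, both pathologies are removed after a finite alternating sequence of blow-ups of $\mathscr{Y}_1$ at closed points of the special fiber (each followed by re-normalization) and finite separable base extensions (to split residue fields and extract roots of $\pi$); the sequence terminates because a suitable numerical invariant attached to these local singularities strictly decreases at each step.

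At the end of this process we obtain a semi-stable model $\mathscr{Y}_2$ of $Y_{k_2}$ over some finite separable extension $k_2/k_1$, a semi-stable $R_2$-model $\mathscr{X}_2$ of $X_{k_2}$, and a finite surjective morphism $\varphi_2\colon\mathscr{X}_2\to\mathscr{Y}_2$ extending $f_{k_2}$. To enforce (a), we further blow up $\mathscr{Y}_2$ above the images of the exceptional components of the birational map $\mathscr{X}_1\dashrightarrow\mathscr{X}_2$ (inserting these blow-ups into the Liu sequence and re-stabilizing after each); the resulting $\mathscr{X}'$ then dominates $\mathscr{X}_1$ and hence $\mathscr{X}_{R'}$. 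To enforce the nontrivial inclusion in (c), we further blow up $\mathscr{Y}'$ at every smooth point of its special fiber that carries a node of $\mathscr{X}'_0$ in its preimage, and re-stabilize; this strictly decreases the number of such ``misplaced'' nodes and so terminates. The reverse inclusion $\varphi_0^{-1}((\mathscr{Y}'_0)_{\mathrm{sing}})\subseteq(\mathscr{X}'_0)_{\mathrm{sing}}$ is automatic from a local analysis: in a semi-stable curve, smooth points have completed local ring $R'[[t]]$ while nodes have the form $\hat{R}'[[u,v]]/(uv-\pi)$, and the former cannot be a finite module over the latter (any choice of $u,v \in R'[[t]]$ with $uv=\pi$ forces the image of the latter to lie inside $\hat{R}'$, over which $R'[[t]]$ has infinite rank).

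The main obstacle is the termination and compatibility of these four iterated refinement processes over a single finite separable extension $k'/k$: without care, the blow-ups demanded by (a) or (c) can destroy the semi-stability previously achieved and force additional iterations of Liu's procedure, which in turn can re-introduce misplaced nodes. This is handled by ordering the operations so that blow-ups affecting only the smooth locus of $\mathscr{Y}'$ come last, and by invoking the finiteness of the discriminant of $k'(X)/k'(Y)$ to bound the total number of necessary refinements.
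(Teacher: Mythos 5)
Your proposal follows a genuinely different route from the paper --- a direct one via normalization of a semi-stable model of $Y$ in $k(X)$ followed by iterated blow-ups, normalizations and base extensions (the Coleman/Liu--Lorenzini style of argument) --- whereas the paper reduces to the Galois case via the Galois closure, uses \emph{equivariant} semi-stable reduction, renders the action inversion-free by blowing up the nodes swapped by involutions and passing to a ramified quadratic extension, and then simply takes quotients, with semi-stability of the quotient and the node-to-node property falling out of Proposition 1.6 of \cite{LiuLorenzini}. The problem is that in your route the two hardest points are asserted rather than proved. First, the termination of the ``alternating sequence of blow-ups, re-normalizations and base extensions'' is exactly the content of the theorems of Coleman and Liu; you invoke ``a suitable numerical invariant that strictly decreases'' without exhibiting one, and in residue characteristic $p>0$ the completed local rings of the normalization above nodes and smooth points are \emph{not} of the simple forms $\hat{R}_1[[u,v]]/(uv-\pi^a)$ or mildly non-regular that your description suggests --- wild ramification produces genuinely complicated normal surface singularities, and controlling them is where all the work lies (cf.\ Epp-type elimination of wild ramification in Liu's proof). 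Appealing at the end to ``the finiteness of the discriminant'' to bound the number of refinements is not an argument.

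Second, your fix for the nontrivial half of (c) --- blowing up $\mathscr{Y}'$ at each smooth point of $\mathscr{Y}'_0$ whose preimage contains a node of $\mathscr{X}'_0$ --- does not work as stated. The formal fiber of such a ``misplaced'' node is an open annulus mapping finitely onto the open disk that is the formal fiber of the smooth point below (this genuinely occurs, e.g.\ for supersingular annuli over the $j$-line); to receive that node one must subdivide the disk below at the correct ``radius'', which in general requires a \emph{ramified} base extension and a specific choice of center, not merely the blow-up at the closed point, whose exceptional node will typically miss the annulus upstairs. Moreover, after blowing up below one must re-normalize above, which can destroy semi-stability and create new misplaced nodes, so the claimed strict decrease of their number is unjustified. (This is precisely the difficulty the paper's inversion-free construction --- blow up the swapped node upstairs, pass to a ramified quadratic extension, normalize --- is designed to resolve.) By contrast, your argument for the easy inclusion of (c) is essentially sound, modulo a slip: the image of $\hat{R}'[[u,v]]/(uv-\pi)$ in $\hat{R}'[[t]]$ lies in $\hat{R}'+\mathfrak{m}_{R'}\hat{R}'[[t]]$ rather than in $\hat{R}'$ (since $u,v$ must be $\pi'$-divisible units times powers of $\pi'$), but reducing modulo the maximal ideal of $R'$ still shows $\hat{R}'[[t]]$ cannot be finite over it. To make the proposal into a proof you would either have to carry out the resolution and termination arguments in detail, or do what the paper does and reduce to the Galois case where the quotient construction gives semi-stability and (c) for free.
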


\begin{proof}
The proof is a slight modification of the proof of Proposition 3.8 in \cite{Liu}.
We first prove the theorem in the special case where $f$ is a finite Galois covering.
Let $G$ be the Galois group of $f$.
Then, replacing $k$ by a finite separable extension if necessary, $X$ has a semi-stable model $\mathscr{X}''$ that dominates $\mathscr{X}$ and admits an extension of the $G$-action (see Corollary 2.5 in \cite{Liu}).
We want to modify this to a semi-stable model with {\em inversion-free} action, as follows. 
Suppose an element $\sigma\in G$ of order two interchanges two components $C_1$ and $C_2$ (possibly $C_1=C_2$) intersecting at a node $u$.
Then we blow-up $\mathscr{X}''$ at the closed point $u$; we do this at all such nodes.
The exceptional curves have multiplicity two.
Then we replace $k$ by a ramified quadratic extension $k'$, and take the normalization to obtain a model $\mathscr{X}'$ of $X_{k'}$; it is clear that the $G$-action extends to $\mathscr{X}'$.
The quotient $\mathscr{Y}'=\mathscr{X}'/G$ is a semi-stable model of $Y_{k'}$ (see Proposition 1.6 in \cite{LiuLorenzini}), and the quotient map $\varphi\colon\mathscr{X}'\rightarrow\mathscr{Y}'$ has the desired properties; we postpone the verification of property (c). 

Next, we treat the case where $f$ is separable.
Let $\til{X}$ denote the Galois closure of $f\colon X\rightarrow Y$.
Then, replacing $k$ by a finite separable extension if necessary, we may assume that $\til{X}$ is smooth over $k$.
As in the proof of Proposition 3.8 in \cite{Liu}, replacing $k$ furthermore by a finite separable extension if necessary, one has a semi-stable model $\til{\mathscr{X}}$ of $\til{X}$ that dominates $\mathscr{X}$ and admits an extension of the action of $G=\mathrm{Gal}(\til{X}/Y)$.
As in the first part, we modify $\til{\mathscr{X}}$ to an inversion-free semi-stable model $\til{\mathscr{X}}'$ (after replacing $K$ by a finite separable extension).
Then the obvious map $$\varphi\colon\mathscr{X}'=\til{\mathscr{X}}'/H\rightarrow\mathscr{Y}'=\til{\mathscr{X}}'/G,$$ where $H=\mathrm{Gal}(\til{X}/X)$, gives the desired model of $f$, as we will see soon below.

In general, we decompose $f$ into a finite separable $X\rightarrow Z$ followed by a purely inseparable Frobenius map $Z\rightarrow Y\cong Z^{(p^r)}$(see Proposition 3.5 in \cite{Liu}).
The first part $X \rightarrow Z$ of the decomposition has an $R'$-model $\mathscr{X}'\rightarrow\mathscr{Z}'$ obtained as above. Setting $\mathscr{Y}'=\mathscr{Z}^{\prime (p^r)}$, we find that the composite map $$\varphi\colon\mathscr{X}'\rightarrow\mathscr{Z}'\rightarrow\mathscr{Y}'$$ gives the answer.

The $R'$-morphism $\varphi\colon\mathscr{X}'\rightarrow\mathscr{Y}'$ thus obtained has properties (a) and (b).
In order to show that (c) holds, it suffices to show that neither of the following two situations occurs:
\begin{itemize}
\item[(i)] there exists a double point $u$ of $\mathscr{X}'_0$ that is mapped to a smooth point of $\mathscr{Y}'_0$;
\item[(ii)] there exists a smooth point $u$ of $\mathscr{X}'_0$ that is mapped to a double point of $\mathscr{Y}'_0$.
\end{itemize}
One can see from the construction (due to the `inversion-free' nature) above that the situation (i) does not occur. Finally, situation (ii) is also excluded due to Proposition 1.6 in \cite{LiuLorenzini}.
\end{proof}

\section{Graphs and their stable gonality} \label{graphs}
\begin{se} \label{notgr} Let $G$ be a connected finite graph. In this paper, a graph can have multiple edges (this is sometimes called a ``multigraph'', but we will not use this terminology). We denote the sets of vertices and edges by $\V=\V(G)$ and $\E=\E(G)$, respectively.  We denote by $|G|$ the cardinality $|\V(G)|$ of the vertex set. By $\E(x,y)$ we denote the set of edges connecting two vertices $x,y\in\V(G)$, and more generally, for two subsets $A, B \subseteq \V$, we denote by $\E(A,B)$ the set of edges in $G$ that connect elements from $A$ to elements from $B$: 
$$ \E(A,B) = \bigcup_{x \in A \wedge y \in B}  \E(x,y).  $$ Our graphs are, unless clearly indicated, undirected, i.e., $\E(x,y)=\E(y,x)$. In case we have an oriented edge we will write $(x,y)$ for an edge with source $x$ and target $y$. The set of edges incident to a given vertex $x$ is denoted by $\E_x$. The number of edges in $\E_x$, where, as usual, edges in $\E(x,x)$ (viz., loops) are counted with multiplicity two, is called the \emph{degree} or \emph{valancy} of $x$, and is written $d_x$. A graph is called $k$-\emph{regular} if $d_x=k$ for all $x\in\V$. A graph is called \emph{loopless} if $|\E(x,x)|=0$ for all $x\in V$. Two vertices $x,y$ are called \emph{adjacent} if $|E(x,y)|\geq 1$, and we denote it by $x\sim y$. For a subset $S\subset V$ the \emph{volume} is defined to be $$\vol(S)=\sum_{v\in S}d_v.$$ 
In particular,  $\vol(G)=2 \cdot |\E|$. 

Another important invariant of a graphs is the \emph{genus}, by which we mean the first Betti number $g(G)=|\E|-|\V|+1$. Note that this differs from another convention in graph theory in which ``genus'' means the minimal genus of a Riemann surface in which the graph can be embedded without self-intersection. A graph of genus 0 is called a \emph{tree}.

Functions $f:\V\to\R$, are simply called ``functions on $G$''. These form a finite dimensional vector space, equipped with the standard inner product 
\[\langle f, g\rangle=\sum_{v\in\V(G)}f(v)g(v).\]
\end{se} 

\begin{se} Denote by $A=A_G$ the adjacency matrix of a connected graph $G$ (of which the $(x,y)$-entry is $|E(x,y)|$) and by $D=D_G$ the diagonal matrix with the degrees of the vertices on the diagonal. Then the Laplace operator is defined by $L=L_G=D-A$. 

For any graph, $L_G$ is a real symmetric positive-semidefinite matrix, and therefore has non-negative real eigenvalues. The function $\mathbb{1}$, defined as being identically equal to $1$ on $\V$, is an eigenfunction of $L_G$ with eigenvalue $0$. The other eigenvalues are positive. We order the eigenvalues
\[0=\lambda_0 < \lambda_1\leq\lambda_2...\leq\lambda_{n-1},\]
where $n$ is the number of vertices of the graph. It is the first non-zero eigenvalue which is important for us; we denote it by $\lambda_G:=\lambda_1$.  
\end{se} 

\begin{se} \label{NL}
Sometimes, one uses the \emph{normalized Laplacian} of $G$, defined as \begin{equation*}  L^\sim_G=D_G^{-1/2} L_G D_G^{-1/2}\end{equation*} weighted by vertex degrees (compare Chung \cite{Chung}). 
\end{se}

\begin{definition}
A graph $G$ is called \emph{stable} if all vertices have degree at least $3$. A graph $G'$ is called a \emph{refinement} of $G$ if it can be obtained from $G$ by performing subsequently finitely many times one of the two following operations:
\begin{enumerate}
 \item subdivision of an edge,
 \item addition of a \emph{leaf}, i.e., the addition of an extra vertex and an edge between this vertex and a vertex of the already existing graph.
\end{enumerate}
\end{definition}

\begin{remark}
One of the main tools in this paper is the notion of harmonic morphisms of graphs as developed by Urakawa \cite{Urakawa} and  Baker and Norine \cite{BakerNorine}, and later generalized to harmonic indexed morphisms by Caporaso \cite{Caporaso}.  We will use a terminology that is compatible with that of \cite{Amini}, and different from \cite{Caporaso}, and we will only consider ``unweighted'' graphs in the sense of \cite{Caporaso}. In the appendix, we will discuss the relations between different notions of gonality for graphs. 
\end{remark}

\begin{definition} \label{defmor}
Let $G,G'$ be two loopless graphs.
\begin{enumerate}
 \item
A \emph{finite morphism between $G$ and $G'$} (denoted by $\f:G\to G'$) is a map $$\varphi \colon \V(G)\cup\E(G)\to\V(G')\cup\E(G')$$ such that  $\varphi(\V(G))\subset \V(G')$ and for every $e\in\E(x,y)$, $\varphi(e)\in\E(\varphi(x),\varphi(y))$, together with, for every $e\in\E(G)$, a positive integer $r_\f(e)$, the \emph{index} of $\f$ at $e$.  
\item A finite morphism is called \emph{harmonic} if for every $v\in\V(G)$ there exists a well-defined number, $m_\f(v)$, such that for every $e'\in\E_{\f(v)}(G')$ we have 
\[m_{\f}(v)=\sum_{e\in\E_v,\f(e)=e'}r_\f(e).\]
This does not make sense if $\E_{\f(v)}(G') = \emptyset$, but then we postulate that  $m_{\f}(v)$ can be chosen to be any positive integer. 
\item For a finite harmonic morphism the following number, which is called the \emph{degree} of $\f$, is independent of $v'\in\V(G')$ or $e\in\E(G')$ (\cite{BakerNorine}, Lemma 2.3):
\[\deg\f=\sum_{v\in\f\inv(v')}m_\f(v)=\sum_{e\in\f\inv(e')}r_\f(e).\]
\end{enumerate}
\end{definition}

From the perspective of this paper, it is natural to define the following notion of gonality (this is different from existing notions of gonality, but we will discuss these in the appendix). 

\begin{definition}
A graph $G$ is called \emph{stably} $d$-\emph{gonal} if it has a refinement that allows a degree $d$ finite harmonic morphism to a tree. The \emph{stable gonality} of a graph $G$ is defined to be 
$$ \mathrm{sgon}(G)=  \min\{\deg\f\ | \f \colon G' \to T \} $$ with $G'$  a refinement of $G$ and $\f $ a finite harmonic morphism to a tree $T$.
\end{definition}

\begin{remark}
Although here, finite harmonic morphisms are defined only for loopless graphs, stable gonality is defined for all graphs, as loops can be ``refined away'' by subdividing the loop edges. Alternatively, one may extend the definition of ``harmonic'' to graphs with loops, as in \cite{Amini}. 
\end{remark}

\begin{example} \label{examplegon1}
The ``banana graph'' $B_n$ (see Figure \ref{figbanana}) given by two vertices joined by $n>1$ distinct edges is the intersection dual graph of two rational curves intersecting in $n$ points. The minimal degree of a finite harmonic morphism from $B_n$ to a tree is $n$. However, if we subdivide each edge once, the resulting graph admits such a finite harmonic morphism of degree $2$ to a tree, which is a vertex with $n$ edges sticking out (by identifying the two original vertices). Hence the banana graph has stable gonality equal to $2$. This is compatible with the fact that the banana graph can be the intersection dual graph of both hyperelliptic and non-hyperelliptic (if $n>3$) curves, and these are not distinguished by all subdivisions of their reduction graph.  

This example occurs in nature as the stable reduction of the modular curve $X_0(p)$ over $\Q_p$,  where $n$ is then the number of supersingular elliptic curves modulo $p$. One should observe (\cite{Baker}, 3.6) that stable reduction graphs are naturally \emph{metric} graphs, and as such, the stable reduction graph of $X_0(p)$ is only equal to the (unit-length metrized) banana graph for $p=1\, \mathrm{mod}\, 12$ with $n=(p-1)/12$.   
\begin{center}
\begin{figure}[h]
\includegraphics[width=5cm]{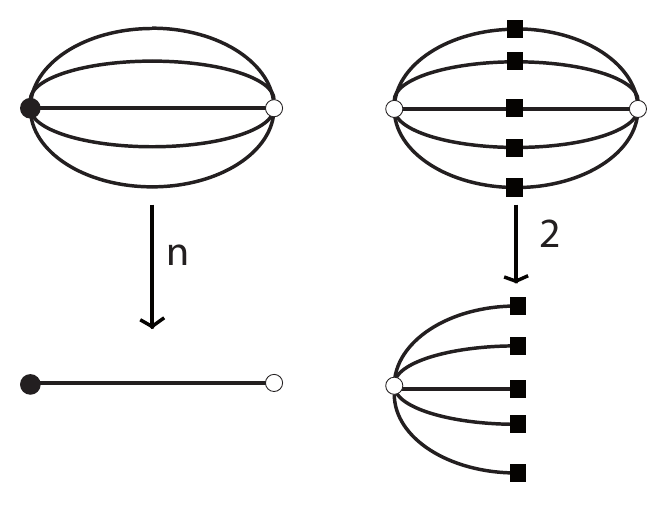}
\caption{A banana graph $B_n$ with a finite harmonic morphism of (minimal) degree $n$, and its subdivision, with a finite harmonic morphism of degree $2$ (all indices are $1$).}
\label{figbanana}
\end{figure}
\end{center}
\end{example}

\begin{example} \label{examplegon2} 
The minimal degree of a finite harmonic morphism from the complete graph $K_4$ to a tree is $4$ (this can be checked by a somewhat tedious enumeration), but by adding leaves, such a morphism of degree $3$ can be constructed, see Figure \ref{figk4}. 
\begin{center}
\begin{figure}[h]
\includegraphics[width=9cm]{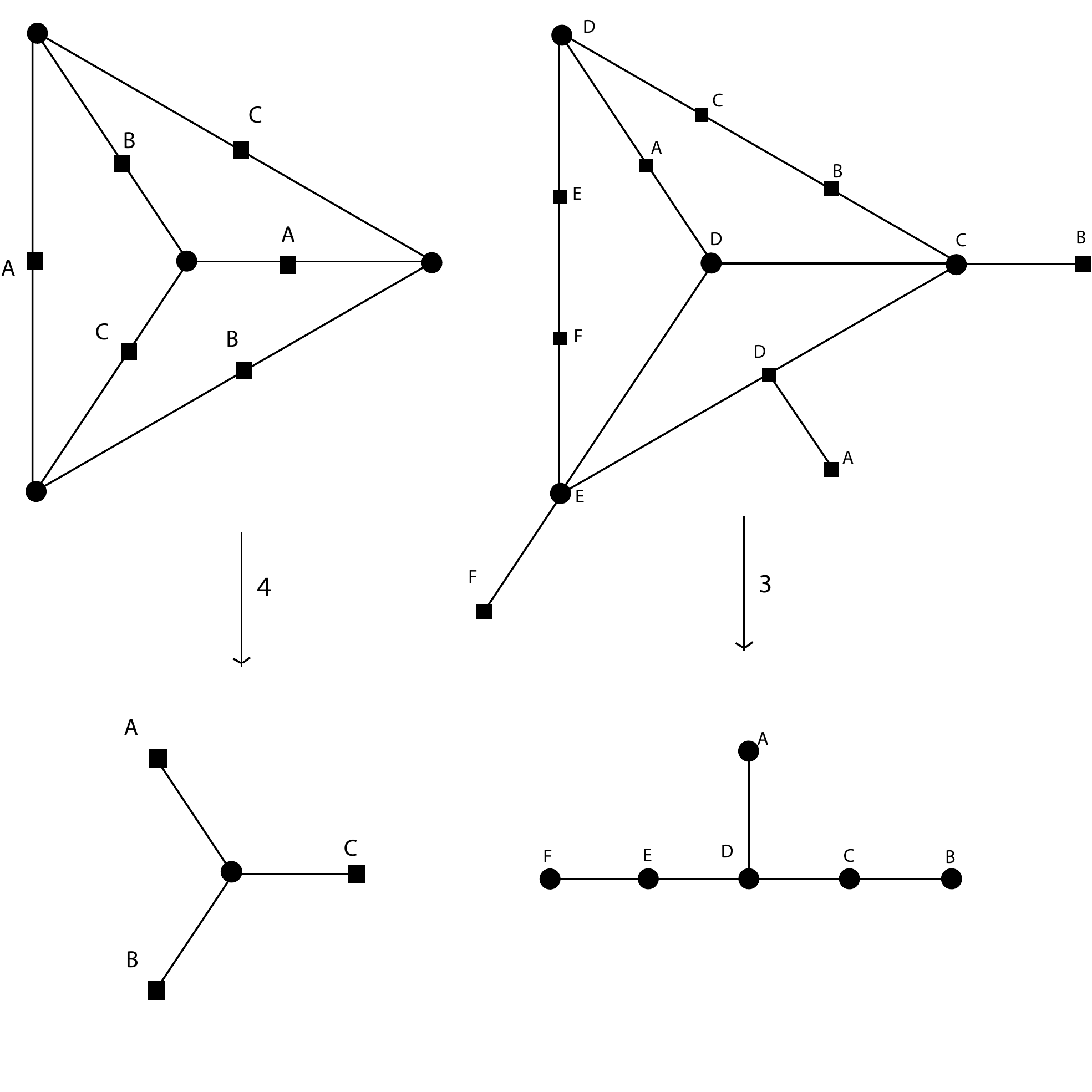}
\caption{A subdivision of $K_4$ with a finite harmonic morphism of degree $4$, and a refinement (with leaves) of $K_4$ with a finite harmonic morphism of degree $3$ (all indices are $1$).}
\label{figk4}
\end{figure}
\end{center}
\end{example}

\section{Comparing curve gonality and graph gonality: proof of Theorem \ref{compare}}
\begin{se} 
Let $\mathscr{X}$ and $\mathscr{Y}$ be $R$-models of geometrically connected projective smooth curves over $k$, and $\varphi\colon\mathscr{X}\rightarrow\mathscr{Y}$ an $R$-morphism.
Let us say $\varphi$ is {\em inversion-free semi-stable} if the following conditions are satisfied:
\begin{itemize}
\item[{\rm (a)}] $\mathscr{X}$ and $\mathscr{Y}$ are semi-stable;
\item[{\rm (b)}] $\varphi$ is finite and surjective; 
\item[{\rm (c)}] $\varphi^{-1}_0((\mathscr{Y}_0)_{\mathrm{sing}})=(\mathscr{X}_0)_{\mathrm{sing}}$.
\end{itemize}
Theorem \ref{theorem-21} says that any finite cover $f\colon X\rightarrow Y$ between geometrically connected projective smooth curves over $k$ admits, after replacing $k$ by a finite separable extension, an inversion-free semi-stable model $f$; moreover, given an arbitrary $R$-model $\mathscr{X}$ of $X$, we can take such an $R$-model $\varphi\colon\mathscr{X}'\rightarrow\mathscr{Y}'$ of $f$ such that $\mathscr{X}'$ dominates $\mathscr{X}$.
\end{se} 

\begin{se} Let $\varphi\colon\mathscr{X}\rightarrow\mathscr{Y}$ be an inversion-free semi-stable model of $f$.
Consider the dual graphs $\Delta:=\Delta(\mathscr{X}_0)$ and $\Gamma=\Delta(\mathscr{Y}_0)$ of the special fibers of $\mathscr{X}_0$ and $\mathscr{Y}_0$, respectively.
The vertices of $\Delta$ (respectively $\Gamma$) correspond to irreducible components of $\mathscr{X}_0$ (respectively $\mathscr{Y}_0$), and two of them are connected by an edge if and only if they intersect.

The morphism $\varphi$ induces the following two set-theoretic maps:
\begin{itemize}
\item since $\varphi$ is finite, it maps each component of $\mathscr{X}_0$ surjectively onto a component of $\mathscr{Y}_0$; in particular, it induces a map $\V(\Delta)\rightarrow \V(\Gamma)$ between the sets of vertices of the graphs;
\item due to the condition (c) above, each double point of $\mathscr{X}_0$ is mapped to a double point of $\mathscr{Y}_0$; that is, we have the map $\E(\Delta)\rightarrow \E(\Gamma)$ between the sets of edges.
\end{itemize}
Thus we obtain a graph map 
$
\phi\colon\Delta\longrightarrow\Gamma.
$
\end{se} 

\begin{se} We now assume that $f$ is \emph{separable}, and define  
the index $r_{\phi}$ for such  $f$.
Let $e\in \E(\Delta)$ be an edge with extremities $v,v'\in \V(\Delta)$.
Let $C,C'$ (respectively $D,D'$) be the components of $\mathscr{X}_0$ (respectively $\mathscr{Y}_0$) corresponding to $v,v'$ (respectively $\phi(v),\phi(v')$), respectively.
The maps $C\rightarrow D$ and $C'\rightarrow D'$ ramify at the intersection point $u$ with the same decomposition group; then define $r_{\phi}(e)$ to be the order of this group.
In this way, $\phi$ becomes a finite morphism of graphs in the sense of Definition \ref{defmor}. 
\end{se} 

\begin{proposition}\label{prop-31}
For a separable $f\colon X\rightarrow Y$ that admits an inversion-free semi-stable model $\varphi\colon\mathscr{X}\rightarrow\mathscr{Y}$, the finite graph morphism $\phi\colon\Delta\rightarrow\Gamma$ constructed above is harmonic of degree $\deg(f)$ $($in the sense of Definition \ref{defmor}$)$.
\end{proposition}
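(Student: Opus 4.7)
The plan is to show harmonicity vertex-by-vertex with an explicit multiplicity, and then extract the degree from the decomposition of $\mathscr{X}_0$ into components over $\mathscr{Y}_0$. Fix $v\in\V(\Delta)$ corresponding to a component $C$ of $\mathscr{X}_0$, let $D$ be the component of $\mathscr{Y}_0$ corresponding to $\phi(v)$, and let $f_C\colon C\to D$ denote the restriction of $\varphi_0$. Since $\mathscr{X}$ and $\mathscr{Y}$ are semi-stable, $C$ and $D$ are smooth projective curves, and since $f$ is separable so is $f_C$. The candidate multiplicity at $v$ is then $m_\phi(v):=\deg(f_C)$, which I would take as the definition in the (essentially trivial) degenerate case $\E_{\phi(v)}(\Gamma)=\emptyset$ as well.

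For harmonicity, I would fix an edge $e'\in\E_{\phi(v)}(\Gamma)$ corresponding to a node $u'$ of $\mathscr{Y}_0$ on $D$ and note that edges $e\in\E_v$ with $\phi(e)=e'$ correspond to nodes $u\in C$ with $\varphi(u)=u'$. Here the inversion-free condition (c) is crucial: it guarantees that \emph{every} set-theoretic preimage of $u'$ lying on $C$ is already a node of $\mathscr{X}_0$, so the bijection is with the full scheme-theoretic fiber $f_C^{-1}(u')$. Using the definition of $r_\phi(e)$ as the ramification index of $f_C$ at $u$, the standard identity $\sum_{u\mapsto u'}e_u(f_C)=\deg(f_C)$ for a finite separable map of smooth curves yields
\[\sum_{e\in\E_v,\,\phi(e)=e'}r_\phi(e)=\deg(f_C)=m_\phi(v),\]
independent of the choice of $e'$, which is precisely the harmonicity condition.

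For the degree, I would argue that $\varphi\colon\mathscr{X}\to\mathscr{Y}$ is $R$-flat (being finite between $R$-flat schemes with equidimensional, Cohen--Macaulay fibers), so the generic and special fibers have the same degree, namely $\deg(f)$. Restricting $\varphi_0$ to a single component $D$ of $\mathscr{Y}_0$ and using the component decomposition $\varphi_0^{-1}(D)=\bigsqcup_{C\mapsto D}C$ gives $\sum_{C\mapsto D}\deg(f_C)=\deg(f)$. Choosing $v'\in\V(\Gamma)$ with component $D$, one then has
\[\sum_{v\in\phi^{-1}(v')}m_\phi(v)=\sum_{C\mapsto D}\deg(f_C)=\deg(f),\]
as required.

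The main obstacle I anticipate is the local picture at a node $u\in C\cap C'$ of $\mathscr{X}_0$ lying over a node $u'\in D\cap D''$ of $\mathscr{Y}_0$: one must verify that $C\to D$ and $C'\to D''$ ramify at $u$ with a \emph{common} ramification index (so that $r_\phi(e)$ is well defined as a function of the unoriented edge $e$) and, relatedly, that the two local branches of $\mathscr{X}_0$ at $u$ really map to the two distinct local branches of $\mathscr{Y}_0$ at $u'$ rather than both to the same branch. Both facts are a consequence of the inversion-free property together with the local structure of $\varphi$ at the ordinary double points, exactly as encoded in Proposition~1.6 of \cite{LiuLorenzini}; I would invoke that result rather than reproving it.
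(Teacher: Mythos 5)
Your argument is essentially the paper's own proof, which is much terser: it likewise sets $m_\phi(v)=\deg(C\to D)$, reads off harmonicity from the fiber of the node $u'$ on $C$ (with the inversion-free condition ensuring the graph-theoretic and scheme-theoretic fibers agree), asserts without further comment that the two branches at a node carry the same local index, and concludes that the degree is $\deg(f)$. The one point to adjust is your definition of the index: the paper takes $r_\phi(e)$ to be the order of the decomposition group at the node, i.e.\ the local degree $e_uf_u$, not the bare ramification index $e_u$, since the identity $\sum_{u\mapsto u'}r_\phi(e)=\deg(f_C)$ requires the residue-degree factor when the residue field of $R$ is not algebraically closed; over $\bar{k}$ (the case relevant to Corollary \ref{cor-31}) the two coincide, so your version is adequate there.
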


\begin{proof}
Let $v\in \V(\Delta)$ be a vertex, and $C$ (respectively $D$) the component of $\mathscr{X}_0$ (respectively $\mathscr{Y}_0$) corresponding to $v$ (respectively $\phi(v)$).
Let $m_{\phi}(v)$ be the degree of the covering map $C\rightarrow D$.
Then for any edge $e'\in \E(\Gamma)$ emanating from $\phi(v)$, we have
$$
m_{\phi}(v)=\sum_{\phi(e)=e'}r_{\phi}(e), 
$$
and hence $\phi$ is harmonic of degree $\deg(f)$. 
\end{proof}

\begin{corollary}[$=$Theorem \ref{compare}] \label{cor-31}
Let $X$ be a geometrically connected projective smooth curve over $k$, and $\mathscr{X}$ the stable $R$-model of $X$, and let $\Delta(\mathscr{X}_0)$ denote the intersection dual graph of the special fiber. 
Let $\ovl{k}$ be an algebraic closure of $k$.
Then we have
$$
\mathrm{gon}_{\bar{k}}(X)\geq\mathrm{sgon}(\Delta(\mathscr{X}_0)).
$$
\end{corollary}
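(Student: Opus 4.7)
The plan is to start with a finite morphism $f \colon X_{\bar{k}} \to \PP^1_{\bar{k}}$ realizing the gonality $d = \gon_{\bar{k}}(X)$ and to extract from it a finite harmonic morphism, of degree $d$, from some refinement of $\Delta(\mathscr{X}_0)$ to a tree; by the very definition of stable gonality, this immediately yields $\sgon(\Delta(\mathscr{X}_0)) \leq d$.

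First I reduce to the case that $f$ is separable. Using the separable-then-Frobenius factorization $X \to Z \to \PP^1$ of \cite{Liu}, Prop.~3.5, with $X \to Z$ separable and $Z \to \PP^1 \cong Z^{(p^r)}$ the $r$-th relative Frobenius, working over the perfect field $\bar{k}$ one has $Z^{(p^r)} \cong Z$ as $\bar{k}$-schemes (the absolute Frobenius on $\bar{k}$ is invertible), so $Z \cong \PP^1$. If $p^r > 1$ the separable map $X \to Z \cong \PP^1$ would then have degree $d/p^r < d$, contradicting the minimality of $d$. Hence $f$ is already separable. Since $f$ is defined over some finite separable subextension of $\bar{k}/k$, I replace $k$ by this extension (and $R$ by its integral closure) and regard $f \colon X \to \PP^1$ as a separable $k$-morphism.

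I now apply Theorem \ref{theorem-21} to obtain, after possibly one further finite separable extension of $k$, an inversion-free semi-stable model $\varphi \colon \mathscr{X}' \to \mathscr{Y}'$ of $f$ in which $\mathscr{X}'$ dominates the base change of the stable model $\mathscr{X}$. Proposition \ref{prop-31} then converts $\varphi$ into a finite harmonic morphism of graphs
\[ \phi \colon \Delta(\mathscr{X}'_0) \longrightarrow \Delta(\mathscr{Y}'_0) \]
of degree $\deg(f) = d$.

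It remains to identify the two graphs. For the target, $\mathscr{Y}'$ is a semi-stable model of $\PP^1$, so all components of $\mathscr{Y}'_0$ are rational and the arithmetic genus of $\mathscr{Y}'_0$ equals $g(\PP^1)=0$; this forces the first Betti number of $\Delta(\mathscr{Y}'_0)$ to vanish, so the target is a tree. For the source, base change of the stable model from $R$ to $R'$ does not alter the dual graph (the stable components are geometrically irreducible and their intersection pattern is preserved), while any semi-stable $R'$-model $\mathscr{X}'$ dominating the stable model differs from it only by a finite sequence of blow-ups at closed points: blowing up a node inserts a chain of $\PP^1$'s between two components, corresponding to subdivisions of the corresponding edge, whereas blowing up a smooth closed point of a component produces a rational tail, corresponding to the addition of a leaf. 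The main obstacle I foresee is making this last bookkeeping fully rigorous; the classification of dominating semi-stable models in terms of iterated edge subdivisions and leaf additions is essentially contained in \cite{LiuLorenzini}, but has to be stated precisely enough to guarantee that no other combinatorial configuration can arise. Once this is in place, $\phi$ is a finite harmonic morphism of degree $d$ from a refinement of $\Delta(\mathscr{X}_0)$ to a tree, which yields $\sgon(\Delta(\mathscr{X}_0)) \leq d = \gon_{\bar{k}}(X)$.
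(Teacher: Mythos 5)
Your proposal is correct and follows essentially the same route as the paper: reduce to a separable map via the separable--Frobenius factorization, apply Theorem \ref{theorem-21} to get an inversion-free semi-stable model dominating the stable model, convert it to a finite harmonic graph morphism of degree $\deg f$ via Proposition \ref{prop-31}, and observe that the source is a refinement of $\Delta(\mathscr{X}_0)$ (blow-ups at nodes give subdivisions, at smooth points give leaves) while the target, being the dual graph of a semi-stable model of $\PP^1$, is a tree. The ``bookkeeping'' step you flag is handled in the paper at exactly the same level of detail, citing \cite{LiuLorenzini} for the invariance of the dual graph under base change.
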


\begin{proof} Gonality is the minimal degree of a map $f \colon X \rightarrow \PP^1$. Since we work over an algebraically closed field, we can decompose such a map into a separable part $f \colon X \rightarrow Z$ and a purely inseparable part $Z \to Z^{(p^r)} \cong \PP^1$. Since the genus is preserved by the purely inseparable part, we find that $Z \cong \PP^1$, too, and hence the separable part of a general map is a map of lower degree to $\PP^1$. Hence we can restrict to bounding the degree of a separable $f$. 

The assertion now follows from Proposition \ref{prop-31} and the following auxiliary observations.

(1) By Theorem \ref{theorem-21}, for any given finite cover $f\colon X\rightarrow\PP^1_k$, replacing $k$ by a finite separable extension, one has an inversion-free semi-stable model $\varphi\colon\mathscr{X}'\rightarrow\mathscr{P}'$ of $f$ such that $\mathscr{X}'$ dominates $\mathscr{X}$.
In particular, $\Delta(\mathscr{X}'_0)$ gives a graph that arises from $\Delta(\mathscr{X}_0)$ by subdividing some edges (corresponding to blowing up nodes) and/or adding some leaves (corresponding to blowing up smooth points) --- this is exactly the notion of refinement as we have defined it. 

(2) By replacing the base field $k$ by an arbitrary finite extension $k'$, the base-change $\mathscr{X}_{R'}$, where $R'$ is the integral closure of $R$ in $k'$, is a semi-stable model of $X_{k'}$ (see Section 1.5 in \cite{LiuLorenzini}), which obviously gives the same dual graph as $\Delta(\mathscr{X}_0)$.
\end{proof}

\section{The Brill--Noether bound for stable gonality of graphs: proof of Theorem \ref{BNN}} 

One can use this comparison theorem to prove the analog for stable gonality of graphs of the upper bound for the gonality of curves given by Brill--Noether theory: a curve of genus $g$ over an algebraically closed field has gonality bounded above by $\lfloor (g+3)/2 \rfloor$; this was proven in general by Kleiman and Laksov \cite{Kleiman}. To prove this for graphs, we first show that finite harmonic morphisms can be ``refined'', in a sense to be made precise. 

\begin{definition}
For any two refinements $G_1$ and $G_2$ of a graph $G$ let $G_1\vee G_2$ be the set of all common refinements of $G_1$ and $G_2$.
\end{definition}

\begin{definition} A refinement $G'$ of a graph $G$ induces refinements of all of its subgraphs. If $e \in \E(v,w)$ is an edge in $G$ that connect two vertices $v, w \in \V(G)$, denote by $[e]$ the subgraph of $G$ consisting of the vertices $v$ and $w$ joined by the edge $e$. Similarly, for a vertex $v$, we denote by $[v]$ the subgraph which consists of $v$ only. Denote with $G'[x]$ the refinement of $[x]$ in $G'$, and for an edge $e\in\E(v,w)$ denote with $RG'[e]$ the restricted refinement: $$RG'[e]=G'[e]-(G'[v]-[v])-(G'[w]-[w]).$$
\end{definition} 

\begin{definition} \label{defref} A \emph{refinement of a finite harmonic morphism} $\varphi:G \to T$ is a  finite harmonic morphism \[\varphi':G'\to T'\]
such that $G'$ (respectively $T'$) is a refinement of $G$ (respectively $T$), and such that 
\begin{enumerate}
 \item for all $v\in\V(G)$, $\varphi'(v)=\varphi(v)$;
 \item for any $v, w \in \V(G)$ and any edge $e \in \E(v,w)$, every refinement of $[e]$ in $G'$ is mapped to the refinement of $[\varphi(e)]$ in $T'$, viz., 
 $$ \varphi(G'[e]) = T'[\varphi(e)];$$ 
\item\label{indexedge} for every $e\in\E(G)$ and for all $e'\in RG'[e]$, the index  $r_{\varphi'}(e')=r_{\varphi}(e)$.
\end{enumerate}
It follows that $\deg \varphi=\deg\varphi'$.
\end{definition}

\begin{lemma}
Let $\varphi: G \to T$ be a finite harmonic morphism. Then 
\begin{enumerate} 
\item[\textup{(i)}] \label{boompje} for any refinement $T'$ of $T$, there exists a refinement $\varphi' \colon G' \rightarrow T'$ of $\varphi$; 
\item[\textup{(ii)}] \label{lemref} for any refinement $H$ of $G$, there exists a refinement $\varphi': G'\to T'$ of $\varphi$ such that $G'$ is a refinement of $H$. 
\end{enumerate}
\end{lemma}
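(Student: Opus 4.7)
The plan is to prove both parts by induction on the number of elementary refinement operations (subdivisions of edges and additions of leaves), so that in each part it suffices to construct the refined morphism when $T'$ (resp.\ $H$) differs from $T$ (resp.\ $G$) by a single move.

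For part (i), I first treat the move in which $T'$ arises from $T$ by subdividing an edge $\tilde e$ at a new vertex $\tilde u$. I will construct $G'$ by subdividing every edge $e \in \varphi^{-1}(\tilde e)$ at a new midpoint $u_e$, setting $\varphi'(u_e) := \tilde u$ and assigning the common index $r_\varphi(e)$ to each of the two resulting half-edges, with $m_{\varphi'}(u_e) := r_\varphi(e)$; all other combinatorial data of $G$ are preserved. The other move is when $T'$ attaches a new leaf $\tilde u$ to $\tilde v$ via a new edge $\tilde e$; in that case, at each $v \in \varphi^{-1}(\tilde v)$ I attach a family of new leaves mapping to $\tilde u$ (with edges mapping to $\tilde e$) whose indices sum to $m_\varphi(v)$, which is possible since $m_\varphi(v) \geq 1$. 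In both moves, conditions (1)--(3) of Definition \ref{defref} are immediate from the construction, harmonicity at old vertices is unchanged, and harmonicity at new vertices is forced by the prescribed indices. The degree equality in the leaf-addition move reduces to $\sum_{v \in \varphi^{-1}(\tilde v)} m_\varphi(v) = \deg \varphi$, which is the degree formula in Definition \ref{defmor}(3).

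For part (ii), I reduce similarly to a single move $G \leadsto H$ and apply part (i). If $H$ subdivides an edge $e \in E(G)$, I apply part (i) to the refinement $T'$ of $T$ that subdivides $\varphi(e)$ once; the resulting $G'$ subdivides every edge of $\varphi^{-1}(\varphi(e))$, in particular $e$, and is therefore a refinement of $H$ obtained by further subdividing the remaining fibre edges. If $H$ attaches a new leaf at $v \in V(G)$, I apply part (i) to the refinement $T'$ of $T$ that attaches a new leaf to $\varphi(v)$, exploiting the freedom to attach a single new leaf at $v$ itself (with index $m_\varphi(v)$) together with an appropriate collection of leaves at the other preimages of $\varphi(v)$; the resulting $G'$ is then obtained from $H$ by the leaf-additions at those other preimages, and so refines $H$.

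The only delicate point is the well-definedness of $m_{\varphi'}$ at a new midpoint $u_e$ in the subdivision move of part (i): the two half-edges at $u_e$ project to the two distinct edges meeting $\tilde u$ in $T'$, so the definition of $m_{\varphi'}(u_e)$ forces those two indices to coincide, which is why both halves receive the common value $r_\varphi(e)$. The same prescription yields condition (3) of Definition \ref{defref} on the restricted refinement $RG'[e]$. All remaining verifications are routine bookkeeping against the definitions, with no further subtlety.
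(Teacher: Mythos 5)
Your proof is correct and follows essentially the same strategy as the paper's: subdivisions of a tree edge are lifted to subdivisions of every edge in its fibre (with the common index forcing well-definedness of $m_{\varphi'}$ at midpoints), new tree leaves are lifted to leaves above each preimage vertex with indices summing to $m_\varphi(v)$, and part (ii) is reduced to part (i) by enlarging the target tree. The only organizational difference is that you induct on single elementary moves, whereas the paper handles all moves at once via the common refinement $\bigvee_{e\in\varphi^{-1}(e_0)} H[e]$ of the fibre edges; both are valid.
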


\begin{proof} For part (i), use the following recipe:
\begin{enumerate} 
\item \label{verfijndeboom} replace every edge $e$ in $G$ by $T'[\varphi (e)]$;
\item put indices such that conditions (\ref{indexedge}) in Definition \ref{defref} is satisfied;
\item extend $\varphi$ in the obvious way to a finite harmonic morphism.  
\end{enumerate}
For part (ii), first choose for every edge $e_0$ in $T$ an element in $$\bigvee_{e\in\varphi^{-1}(e_0)}G'([e]),$$ and replace $e_0$ by this common refinement. Call the resulting new graph $T'$, and then apply part (i). 
\end{proof}

\begin{example} \label{examplerefine} In Figure \ref{figrefine}, one sees a finite harmonic morphism $G \to T$ on the left, where all edges have index $1$, except the indicated edge that has index two. The middle picture is a refinement $H$ of the original graph $G$, and the right hand picture shows the refinement $G' \to T'$ as constructed in Lemma \ref{lemref}. Both morphisms have degree $3$. 
\begin{center}
\begin{figure}[h]
\includegraphics[width=13cm]{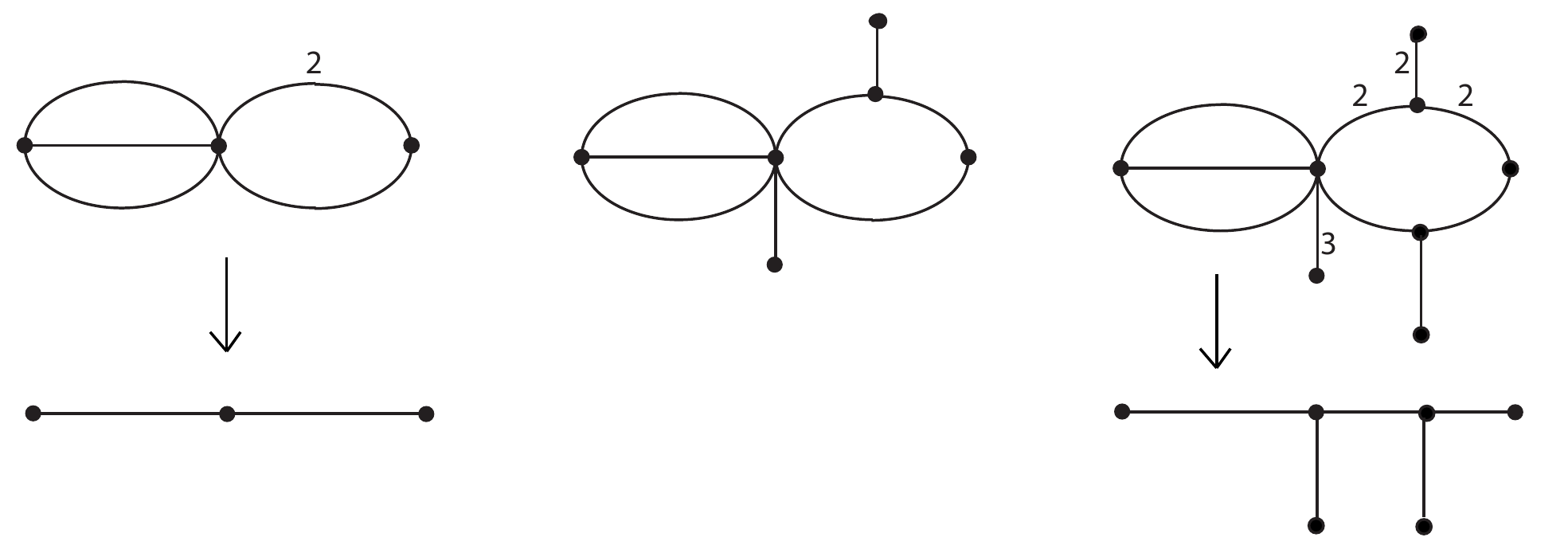}
\caption{Left: a morphism $G \to T$; middle: a refinement $H$ of $G$; right: a refinement $G' \to T'$ with $G'$ a refinement of $H$ (only indices $>1$ are depicted).}
\label{figrefine}
\end{figure}
\end{center}
\end{example}

\begin{corollary} \label{steq} Call two graphs \emph{equivalent} if they are refinements of the same stable graph. This defines an equivalence relation on the set of all graphs of genus at least $2$. The map $\mathrm{sgon}$ is defined on equivalence classes of graphs. 
\end{corollary}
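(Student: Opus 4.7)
The plan is to establish two things: first, that the relation of sharing a common stable refinement is indeed an equivalence relation on graphs of genus $g\geq 2$, and second, that $\sgon$ descends to the quotient. The backbone of both is the fact that every graph of genus $\geq 2$ has a \emph{unique} stable graph underlying it, namely the graph obtained by iteratively removing leaves (degree-$1$ vertices together with their incident edge) and smoothing $2$-valent vertices (replacing a vertex of degree $2$ with its two incident edges by a single edge). The hypothesis $g(G)\geq 2$ guarantees this process terminates at a graph all of whose vertices have degree $\geq 3$ and does not collapse the graph entirely, and the uniqueness of the output is immediate because the operations commute (they touch disjoint parts of the graph) and are local inverses of the two refinement operations defined earlier.

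Once uniqueness of the underlying stable graph $\mathrm{st}(G)$ is in hand, reflexivity and symmetry of the equivalence relation are trivial, and transitivity follows from the observation that if $G$ is a refinement of a stable graph $S$, then $\mathrm{st}(G)=S$; so when two graphs are a refinement of a common stable graph, that stable graph must be $\mathrm{st}$ of each of them. Thus the equivalence class of $G$ is exactly the set of all refinements of $\mathrm{st}(G)$.

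For well-definedness of $\sgon$, it suffices to show $\sgon(G)=\sgon(\mathrm{st}(G))$ whenever $G$ is a refinement of the stable graph $\mathrm{st}(G)$. One inequality is cheap: any refinement of $G$ is automatically a refinement of $\mathrm{st}(G)$, so the infimum over refinements of $G$ is taken over a smaller family, giving $\sgon(\mathrm{st}(G))\leq\sgon(G)$. For the reverse inequality, suppose $\varphi\colon G'\to T$ is a finite harmonic morphism of minimal degree with $G'$ a refinement of $\mathrm{st}(G)$. Because both $G$ and $G'$ are refinements of the same stable graph $\mathrm{st}(G)$, they admit a common refinement $H$ (obtained by taking, on each edge of $\mathrm{st}(G)$, the coarsest subdivision finer than both, and adjoining all leaves of $G$ and $G'$). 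Applying part (ii) of the refinement lemma to $\varphi$ and $H$, we obtain a refinement $\varphi'\colon G''\to T'$ with $G''$ a refinement of $H$, hence a refinement of $G$, and with $\deg\varphi'=\deg\varphi$. This yields $\sgon(G)\leq\deg\varphi=\sgon(\mathrm{st}(G))$.

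The main potential obstacle is the existence of the common refinement $H$; this is verified by working edge-by-edge over $\mathrm{st}(G)$, noting that refinements of a single edge are linearly ordered chains and admit an obvious join, while the trees of leaves attached at each vertex of $\mathrm{st}(G)$ can simply be glued together at their common root. Combining the two inequalities, $\sgon(G)=\sgon(\mathrm{st}(G))$, and since every member of an equivalence class has the same underlying stable graph, $\sgon$ is constant on equivalence classes.
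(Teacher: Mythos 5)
Your proof is correct and follows essentially the same route as the paper: the inequality $\sgon(G)\geq\sgon(\mathrm{st}(G))$ comes for free from the inclusion of refinement families, and the reverse inequality is obtained by passing to a common refinement and invoking part (ii) of the refinement lemma together with the fact that refinement of morphisms preserves degree. You additionally spell out why ``refinements of a common stable graph'' is an equivalence relation via uniqueness of the stabilization $\mathrm{st}(G)$ (a confluence argument the paper leaves implicit), which is a welcome, if slightly informally justified, supplement.
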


\begin{proof}
Let $G'$ be a refinement of $G$. It follows from the definition that $\mathrm{sgon}(G')\geq\mathrm{sgon}(G)$. Since refinement of morphisms preserves degree, the previous lemmas imply that the other inequality $\mathrm{sgon}(G')\leq\mathrm{sgon}(G)$ also holds. 
\end{proof}

\begin{theorem}[= Theorem \ref{BNN}] \label{BNN2}
For any graph $G$ of genus $g \geq 2$, the Brill--Noether bound holds: $$\mathrm{sgon}(G)\leq\lfloor\frac{g+3}{2}\rfloor.$$
\end{theorem}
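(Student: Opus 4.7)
The plan is to deduce the combinatorial Brill--Noether bound from its geometric counterpart by interpreting $G$ as the reduction graph of an actual curve. The bridge between the two is Theorem \ref{compare} (already proven), which lets us pull back an upper bound on the gonality of a curve to an upper bound on the stable gonality of its dual graph.

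First I would use Corollary \ref{steq} to reduce to the case where $G$ itself is stable: since $\mathrm{sgon}$ is constant on refinement-equivalence classes and every graph of genus $g\geq 2$ is equivalent to a (unique) stable graph, there is no loss of generality in assuming $d_v\geq 3$ for every $v\in\V(G)$.

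The main step is to realize $G$ as the intersection dual graph of the stable reduction of a geometrically connected smooth projective curve. Here I would invoke Mumford's theorem on Schottky uniformization: for any stable graph $G$ of first Betti number $g\geq 2$ and any complete nonarchimedean valued field $k$ (whose residue field is large enough that $g$ disjoint ``Schottky discs'' of the required configuration exist), one can build a Schottky group $\Gamma\subset \mathrm{PGL}_2(k)$ whose associated Mumford curve $X_\Gamma$ has genus $g$ and whose stable $R$-model $\mathscr{X}$ has special fiber a union of $|\V(G)|$ copies of $\PP^1$ over the residue field, meeting transversally in a pattern recorded exactly by the edges of $G$; that is, $\Delta(\mathscr{X}_0)\cong G$. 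This is the main technical input and is the step I expect to consume the most care, because it is the only place where one leaves pure combinatorics; however, it is a standard consequence of the Gerritzen--van der Put description of Mumford curves and requires nothing new.

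Once the Mumford curve $X=X_\Gamma$ is constructed, I would apply the Kleiman--Laksov theorem \cite{Kleiman}, which gives the Brill--Noether upper bound
\[
\mathrm{gon}_{\bar{k}}(X)\;\leq\;\left\lfloor\frac{g+3}{2}\right\rfloor
\]
for any smooth projective curve of genus $g$ over any algebraically closed field, without restriction on the characteristic. Chaining this with Theorem \ref{compare} applied to $X$ and its stable model $\mathscr{X}$, I would conclude
\[
\mathrm{sgon}(G)\;=\;\mathrm{sgon}(\Delta(\mathscr{X}_0))\;\leq\;\mathrm{gon}_{\bar{k}}(X)\;\leq\;\left\lfloor\frac{g+3}{2}\right\rfloor,
\]
which is the desired bound.
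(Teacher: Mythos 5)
Your proposal is correct and follows essentially the same route as the paper: reduce to a stable representative via Corollary \ref{steq}, realize the graph as the reduction graph of a Mumford curve by an explicit Schottky construction in the Bruhat--Tits tree, and chain Kleiman--Laksov with Theorem \ref{compare}. The only (immaterial) difference is that the paper's construction produces a \emph{refinement} $G'$ of $G$ (subdividing the $g$ chosen edges to keep the Schottky action inversion-free) rather than $G$ itself as the dual graph, which is harmless precisely because of the refinement-invariance you invoke at the outset.
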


\begin{proof}
Since $\mathrm{sgon}$ is defined on the equivalence classes of graphs it is sufficient to prove the bound for one representative of each equivalence class. It is sufficient to show that any stable graph $G$ of genus $g \geq 2$ admits a refinement $G'$ such that there exists a curve $X$ such that $G'$ is the dual graph of the minimal model of $X$. Indeed, since the genus of $X$ equals the genus of $G'$, which equals $g$ (since the genus of a graph doesn't change under refinement), the classical bound $\mathrm{gon}_{\bar{k}}(X) \leq \lfloor(g+2)/3\rfloor$ holds (cf.\ Kleiman--Laksov \cite{Kleiman}). The result follows from $ \mathrm{sgon}(G') \leq \mathrm{gon}_{\bar{k}}(X)$ (Theorem \ref{compare}).

We now show the existence of such a refinement. Let $G$ be a stable graph of genus $g \geq 2$ and let $\Delta_G=\max\{d_x|x\in\V(G)\}$. Choose $g$ edges $e_1,...,e_g$ of $G$ such that $G-\{e_1,...,e_g\}$ is a tree. Replace each edge $e_i$ (connecting two vertices $x_i$ and $y_i$) by two edges $[x_i,v_i]$ and $[w_i,y_i]$, where $v_i$ and $w_i$ are new vertices not connected to any other vertex.  In this way, $G$ is replaced by a tree $T_G$. Choose an embedding of $T_G$ in the Bruhat-Tits  tree $\T$ for  $k=\F_q((t))$, where $q$ satisfies $q+1\geq\Delta_G$. Denote the images of $v_i$ and $w_i$ in $\T$ by the same letters. Now choose hyperbolic elements $\gamma_1,...,\gamma_g$ in $\PGL(2,k)$ such that each $\gamma_i$ acts as translation along a geodesic through $v_i$ and $w_i$, and  $\gamma_i(v_i)=w_i$. Then $\Gamma=\langle \gamma_1,...,\gamma_g\rangle$ is a Schottky group. Denote by $\T_\Gamma$ the subtree of $\T$ spanned by the limit set of $\Gamma$. Then 
$G'\simeq\Gamma\backslash\T_\Gamma$,
where $G'$ is the refinement of $G$ given by subdividing each of the edges $e_1,...,e_g$ once. Also, $G'$ is the intersection dual graph of the minimal model of the Mumford curve corresponding to $\Gamma$ (\cite{Mumford}, page 164).
\end{proof}

It would be interesting to have a purely graph theoretical proof of the above result.  

\begin{remark}
More general lifting results, such as Lemma 6.3 in \cite{Saidi} or Corollary B.3 from \cite{Baker} with Theorem \ref{theorem-21}, also imply the existence of the refinement and the curve. 
\end{remark}

\section{A spectral lower bound for the stable gonality of a graph: proof of Theorem \ref{spectral}}\label{bound}

In this section, we prove an analogue of the Li--Yau bound, viz., a spectral lower bound for the stable gonality of a graph. The basic philosophy of the proof is to find a lower bound for the first Laplace eigenvalue using its variational characterization, in terms of the degree of a finite harmonic morphism $\varphi \colon G' \rightarrow T$ and the minimal ``size'' of the inverse image of the two parts in which the tree gets cut by removing one of its edges. Then a dichotomy occurs: either the minimal such size is large, or there is a vertex with a large inverse image. Initially, ``large'' depends on the maximal vertex degree of the tree $T$, but if this degree is too large, we change the refinement and morphism to produce a lower bound that only depends on the original graph, not the morphism or tree itself. 

We start by studying such ``sizes'' on trees abstractly: 

\begin{definition}
A \emph{measured tree} $(T,\nu)$ is a connected tree $T$ with a probability measure $\nu$ on $\V(T)$. For an edge $e \in \E(T)$, we decompose $T-e$ into its two connected components $T_1(e)$ and $T_2(e)$: 
$$ T-e=T_1(e) \bigsqcup T_2(e). $$
 We define the \emph{size} of an edge $e \in \E(T)$ (w.r.t.\ $\nu$) by 
$$ \size_\nu(e):= \min \{ \nu(T_1(e)), \nu(T_2(e)) \}. $$

Let $c>0$. Call a measured tree $(T,\nu)$ \emph{$c$-thick} if for every vertex $x \in \V(T)$, the graph $T-x$ has a connected component of measure at least $c$.
\end{definition}

\begin{lemma} \label{Amini} A $c$-thick measured tree $(T,\nu)$ has an edge of size at least $c$.  
\end{lemma}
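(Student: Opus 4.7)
My plan is to argue by contradiction. Suppose every edge $e$ of $T$ has $\size_\nu(e) < c$; then for each edge $e$ exactly one of the two components of $T-e$ has $\nu$-measure strictly less than $c$ (and the other strictly greater than $1-c$). I would orient each edge $e$ to point from the lighter component toward the heavier one, breaking ties arbitrarily. This gives a completely oriented finite tree.

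Next, I would locate a \emph{sink} in this orientation, that is, a vertex $v$ all of whose incident edges are directed inward. Such a sink always exists in a finite oriented tree by a standard walk argument: starting at any vertex and repeatedly following an outgoing edge produces a sequence of vertices that must all be distinct, since a repeated vertex would close up into a cycle inside $T$; because $T$ is finite the walk has to terminate, and it can only terminate at a vertex without an outgoing edge. I expect this existence step to be the most delicate part, but it is essentially a one-line observation.

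Finally, I would use the bijection between components of $T-v$ and edges incident to $v$: for each neighbor $w$ of $v$ joined by the edge $e=\{v,w\}$, the component of $T-v$ containing $w$ is exactly $T_w(e)$, the component of $T-e$ not containing $v$. The sink condition says $T_w(e)$ is the lighter side of $e$, so our hypothesis gives $\nu(T_w(e)) < c$ for every neighbor $w$. Hence every connected component of $T-v$ has $\nu$-measure strictly less than $c$, which directly contradicts the $c$-thickness of $(T,\nu)$ applied at the vertex $v$. This contradiction shows that some edge of $T$ must have $\size_\nu(e) \geq c$, as required.
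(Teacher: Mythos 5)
Your proof is correct, but it runs in the opposite direction from the paper's. The paper argues directly: using $c$-thickness, each vertex $x$ selects a heavy component $C_x$ of $T-x$ and thereby assigns an orientation to the unique edge joining $x$ to $C_x$; since $|T|$ assignments land on only $|T|-1$ edges, the pigeonhole principle produces an edge oriented both ways, i.e., with both sides of measure at least $c$. You instead argue by contradiction: assuming every edge has a light side, you orient \emph{every} edge from its lighter to its heavier side and locate a sink, at which all components of the complement are light, contradicting thickness at that vertex. The two arguments are essentially dual --- your ``sink'' is precisely a vertex at which the paper's pigeonhole collision fails to occur --- and both are equally elementary; the paper's version avoids both the contradiction and the walk argument for sink existence, while yours makes the contrapositive logic (no large edge $\Rightarrow$ some non-thick vertex) more transparent. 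One small inaccuracy: your claim that \emph{exactly one} component of $T-e$ has measure $<c$ fails when $c>1/2$ (both sides can be light), but this is harmless, since your argument only uses that the side an edge points away from at the sink has measure $\size_\nu(e)<c$, which holds whichever way such an edge is oriented.
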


\begin{proof} For any vertex $x \in V(T)$, choose a connected component $C_x$ of $T-x$ of measure at least $c$. Orient the unique edge that connects $x$ to a vertex in $C_x$ in the direction of $x$. By doing this for each vertex, $|T|$ different orientations are assigned to the $|T|-1$ edges of $T$. Hence at least one edge of $T$ is oriented in both directions, and such an edge has size at least $c$. 
\end{proof}

\begin{remark} \label{oldcase}
If $(T,\nu)$ is not $c$-thick, then there exists a vertex $x \in \V(T)$ with $\nu(x) > 1-cd_x$. Indeed, since $T-x$ has $d_x$ components, all of measure less than $c$, we find that $1-\nu(x)=\nu(T-x) < c d_x.$
\end{remark}

The measure we will use counts vertices of $G'$ that belong to the original graph $G$: 

\begin{definition} Let $G$ denote a graph, and $G'$ a refinement of $G$. The probability measure $\mu_G$ on $\V(G')$ is defined by $$\mu_G(A):= \frac{|A \cap \V(G)|}{|G|} \mbox{ for } A \subseteq \V(G').$$
\end{definition} 

\begin{lemma} \label{trivialcase} If $G'$ is a refinement of a graph $G$, and $\varphi \colon G' \rightarrow T$ a finite harmonic morphism to a tree, then $(T,\varphi_* \mu_G)$ is a measured tree, and for any vertex $x \in \V(T)$ , we have $$\deg \varphi \geq \varphi_*\mu_G(x) \cdot |G|. $$
\end{lemma}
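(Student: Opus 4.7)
The proof will essentially unfold the definitions. The plan is first to check that $(T,\varphi_*\mu_G)$ is a measured tree in the sense of the definition, and then to translate the desired inequality into a statement about cardinalities of fibers, which will follow from the degree formula for harmonic morphisms together with the positivity of the multiplicities $m_\varphi(v)$.

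For the first part, I would note that $\mu_G$ is a probability measure on $\V(G')$ by construction (the total mass is $|\V(G)\cap\V(G')|/|G|=|G|/|G|=1$ because $\V(G)\subseteq\V(G')$), and $\varphi$ maps $\V(G')$ to $\V(T)$, so the pushforward $\varphi_*\mu_G$ is a probability measure on $\V(T)$. Since $T$ is already assumed to be a tree, $(T,\varphi_*\mu_G)$ is a measured tree.

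For the quantitative bound, I would simply compute
\[
\varphi_*\mu_G(x)\cdot|G|\;=\;\mu_G(\varphi^{-1}(x))\cdot|G|\;=\;|\varphi^{-1}(x)\cap\V(G)|,
\]
so the inequality to prove becomes $\deg\varphi\geq|\varphi^{-1}(x)\cap\V(G)|$. Now I invoke the degree formula from Definition \ref{defmor}:
\[
\deg\varphi\;=\;\sum_{v\in\varphi^{-1}(x)}m_\varphi(v).
\]
Since each $m_\varphi(v)$ is a positive integer (as emphasized in Definition \ref{defmor}, this holds both in the harmonic case via the sum over $r_\varphi(e)\geq 1$, and in the degenerate case by convention), we have $m_\varphi(v)\geq 1$ for all $v$, so the sum is bounded below by $|\varphi^{-1}(x)|$, which in turn dominates $|\varphi^{-1}(x)\cap\V(G)|$. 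Combining,
\[
\deg\varphi\;\geq\;|\varphi^{-1}(x)|\;\geq\;|\varphi^{-1}(x)\cap\V(G)|\;=\;\varphi_*\mu_G(x)\cdot|G|.
\]

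There is no real obstacle here; both assertions are essentially formal consequences of the definitions of the pushforward measure and of $\deg\varphi$. The only thing to be slightly careful about is making sure the measure $\mu_G$ is well-defined as a probability measure on $\V(G')$, which uses that refinement only adds vertices and so $\V(G)\subseteq\V(G')$.
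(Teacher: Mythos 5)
Your proof is correct and follows essentially the same route as the paper's: identify $\varphi_*\mu_G(x)\cdot|G|$ with $|\varphi^{-1}(x)\cap\V(G)|$ and bound this by $\deg\varphi=\sum_{v\in\varphi^{-1}(x)}m_\varphi(v)$ using $m_\varphi(v)\geq 1$. The paper merely orders the two inequalities slightly differently (restricting the sum to $G$-vertices first, then enlarging to the full fiber), which makes no mathematical difference.
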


\begin{proof}
It suffices to remark that $$\varphi_*\mu_G(x) \cdot |G|= | \varphi^{-1}(x) \cap G |  = \sum_{{v \in G}\atop{\varphi(v)=x}} 1 \leq \sum_{{v \in G}\atop{\varphi(v)=x}} m_\varphi(v) \leq \deg \varphi.$$ 
\end{proof}

The next proposition says that size and degree controls the first eigenvalue of the Laplacian: 

\begin{proposition} \label{Cheeger} If $G'$ is a refinement of a graph $G$, and $\varphi \colon G' \rightarrow T$ a finite harmonic morphism, then for any edge $e \in T$, we have an inequality
$$ \deg \varphi \geq \frac{1}{2}\cdot  \lambda_G \cdot \size_{\varphi_*\mu_G}(e) \cdot |G|.$$ 
\end{proposition}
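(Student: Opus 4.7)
The plan is to apply the Rayleigh (variational) characterisation of the first nontrivial Laplace eigenvalue,
\[
\lambda_G \;=\; \min_{f \perp \mathbb{1},\, f \neq 0} \frac{\sum_{(x,y)\in E(G)}(f(x)-f(y))^2}{\sum_{v\in V(G)} f(v)^2},
\]
to a carefully chosen test function on $G$ built from the combinatorial data of $\varphi$ and the chosen edge $e\in E(T)$. Fix $e$, write $T-e = T_1 \sqcup T_2$, and set
\[
A \,:=\, \varphi^{-1}(V(T_1))\cap V(G), \qquad B \,:=\, \varphi^{-1}(V(T_2))\cap V(G),
\]
so that $A\sqcup B = V(G)$. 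Let $a := |A|/|G| = (\varphi_*\mu_G)(T_1)$ and $b := |B|/|G| = (\varphi_*\mu_G)(T_2)$, and set $s := \size_{\varphi_*\mu_G}(e) = \min(a,b)$.

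Next I will test Rayleigh against $f \colon V(G) \to \R$ given by $f \equiv b$ on $A$ and $f \equiv -a$ on $B$. Then $\sum_v f(v) = |G|(ab - ab) = 0$, so $f \perp \mathbb{1}$. A direct computation gives $\langle f,f\rangle = |G|\,ab$, while an edge $(x,y)\in E(G)$ contributes $(a+b)^2 = 1$ to $\langle Lf,f\rangle$ iff it joins $A$ to $B$ and $0$ otherwise; hence $\langle Lf,f\rangle = |E_G(A,B)|$. The Rayleigh inequality becomes
\[
\lambda_G \,|G|\,ab \;\leq\; |E_G(A,B)|.
\]
Since $s\leq 1/2$, we have $ab = s(1-s) \geq s/2$, so the denominator factor is at least $\tfrac{1}{2}s\,|G|$.

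The key geometric step is to bound $|E_G(A,B)|$ by $\deg\varphi$. For this I use that $G'$ is a refinement of $G$: each edge $e_G \in E_G(A,B)$ is replaced in $G'$ by a path of edges from its $A$-endpoint to its $B$-endpoint (subdivisions do not change this, and the leaf operation does not touch $e_G$). The image of this path in $T$ is a walk from $T_1$ to $T_2$; since $T-e$ is disconnected, the walk must traverse an edge mapping to $e$ at least once. Paths in $G'$ arising from distinct edges of $G$ are edge-disjoint, so we obtain
\[
|E_G(A,B)| \;\leq\; |\varphi^{-1}(e)| \;\leq\; \sum_{e'\in \varphi^{-1}(e)} r_\varphi(e') \;=\; \deg \varphi.
\]
Combining this with the Rayleigh bound yields
\[
\deg\varphi \;\geq\; \lambda_G\,|G|\,ab \;\geq\; \tfrac{1}{2}\lambda_G\,|G|\,\size_{\varphi_*\mu_G}(e),
\]
as desired.

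The main obstacle is the last geometric step: verifying that each $A$–$B$ edge in $G$ produces its own, disjoint contribution to $\varphi^{-1}(e)$. This requires unwinding the definition of refinement (each $G$-edge corresponds to a unique, edge-disjoint subdivided path in $G'$; added leaves do not lie on any such path) together with the elementary observation that any walk in a tree crossing from one side of a removed edge to the other must include that edge. Everything else reduces to the routine Rayleigh calculation and the AM–GM-type estimate $ab \geq s/2$.
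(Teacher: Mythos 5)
Your proof is correct and follows essentially the same route as the paper's: the same partition of $\V(G)$ into the two preimages, the same two-valued test function (yours is just a rescaling of the paper's $1/|G_1|$, $-1/|G_2|$ function, with the factor $\tfrac12$ extracted via $ab\geq s/2$ instead of $1/|G_1|+1/|G_2|\leq 2/\min$), and the same geometric bound $|\E(A,B)|\leq|\varphi^{-1}(e)|\leq\deg\varphi$ via the subdivided paths crossing the removed tree edge. The only thing worth adding is a one-line remark disposing of the degenerate case $\min(|A|,|B|)=0$, where the test function vanishes but the claimed inequality is trivial.
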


\begin{proof}

If we let $G_i:=\V(G)\cap \f\inv(\V(T_i(e)))$ for $i=1,2$ then the statement to be proven is equivalent to $$\frac{1}{2}\lambda_G\min(|G_1|,|G_2|)\leq\deg(\f).$$

First, note that the inequality is trivial if $\min(|G_1|,|G_2|)=0$. Now assume the minimum is non-zero. The estimate follows from the variational characterization of $\lambda_G$ via the Rayleigh-quotient,
\[\lambda_G=\inf_{f\perp\mathbb{1}}\frac{\langle f, L f\rangle}{\langle f, f\rangle}=\inf_{f\perp\mathbb{1}}\frac{\sum\limits_{u\sim v}(f(u)-f(v))^2}{\sum_{v}f(v)^2},\]
where notations are as in \ref{notgr}. 
We construct an appropriate function $f$ based on the finite harmonic morphism $\f \colon G' \to T$ and the removed edge $e\in T$, as follows: 
\[f(v)=\left\{\begin{array}{cl}\frac{1}{|G_1|}&\text{if } v\in G_1,\\-\frac{1}{|G_2|}&\text{if } v\in G_2.\end{array}\right.\]
It is easy to check that $f\perp\mathbb{1}$, and therefore,
\begin{align}
\lambda_G &\leq \frac{\sum\limits_{u\sim v}(f(u)-f(v))^2}{\sum\limits_{v}f(v)^2}\nn\\
&= |\E(G_1,G_2)|(\frac{1}{|G_1|}+\frac{1}{|G_2|})\nn\\
&\leq \frac{2|\E(G_1,G_2)|}{\min(|G_1|,|G_2|)}\nn
\end{align}

We finish the proof by showing that $\deg(\f)\geq |\E(G_1,G_2)|$. Suppose an edge $e\in \E(G_1,G_2)$ is replaced in $G'$ by a path, possibly of length $1$. Let us describe this path as a series of edges $e_1,\dots,e_n \in \E(G')$, such that $e_1$ is incident to a vertex in $G_1$, and $e_n$ is incident to a vertex in $G_2$. Then for at least one of the $e_i$ it holds that $\f(e_i)=e$. The desired inequality follows.
\end{proof} 

A lower bound for the degree of $\varphi$ now follows easily if the map gives a $c$-thick measured tree: 

\begin{corollary} \label{firstcase} If $G'$ is a refinement of a graph $G$, and $\varphi \colon G' \rightarrow T$ a finite harmonic morphism such that $(T,\varphi_*\mu_G)$ is $c$-thick, then $$ \deg \varphi \geq \frac{c}{2}\cdot  \lambda_G \cdot |G|.$$ 
\end{corollary}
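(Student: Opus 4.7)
The plan is very short, since essentially all the machinery is already in place: the corollary is a direct juxtaposition of Lemma \ref{Amini} with Proposition \ref{Cheeger}. First I would use the $c$-thickness hypothesis to feed into Lemma \ref{Amini}, which guarantees the existence of an edge $e \in \E(T)$ with $\size_{\varphi_*\mu_G}(e) \geq c$. This is the only nontrivial observation needed, but it is already done: $c$-thickness is precisely the hypothesis of that lemma, and the measure $\varphi_*\mu_G$ is a probability measure on $\V(T)$ because $\mu_G$ is one on $\V(G') \supseteq \V(G)$ and pushforward under a set map preserves total mass.

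Next I would plug this edge $e$ into Proposition \ref{Cheeger} applied to $\varphi$, obtaining
\[
\deg \varphi \;\geq\; \tfrac{1}{2}\,\lambda_G \,\size_{\varphi_*\mu_G}(e)\, |G| \;\geq\; \tfrac{c}{2}\,\lambda_G\,|G|,
\]
which is the desired inequality. There is really no obstacle: the work has been done in the preceding two results, and the corollary just records the most favourable consequence, namely what happens when the pushforward measure on the target tree is ``spread out'' enough that no single vertex carries too much mass.

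The only thing worth emphasizing in the write-up is why this is interesting compared to the alternative conclusion coming from Remark \ref{oldcase}: if $(T,\varphi_*\mu_G)$ fails to be $c$-thick, then by Remark \ref{oldcase} some vertex $x$ of $T$ satisfies $\varphi_*\mu_G(x) > 1 - c\,d_x$, and by Lemma \ref{trivialcase} this forces $\deg \varphi > (1-c\,d_x)|G|$. Thus Corollary \ref{firstcase} is precisely the ``first horn'' of the dichotomy sketched in the opening paragraph of this section. I would set this up in one sentence so that the reader sees how the corollary will be combined later with the non-thick case to derive Theorem \ref{spectral}, but the proof itself is a two-line assembly and requires no calculation beyond the chain of inequalities displayed above.
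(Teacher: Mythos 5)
Your proof is correct and takes exactly the same route as the paper, which simply cites Lemma \ref{Amini} to produce an edge of size at least $c$ and then applies Proposition \ref{Cheeger} to that edge. The extra contextual remarks about the dichotomy are accurate but not needed for the proof itself.
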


\begin{proof}
Immediate from Lemma \ref{Amini} and Proposition \ref{Cheeger}. 
\end{proof} 

\begin{remark} If the tree is \emph{not} $c$-thick, we know from Remark \ref{oldcase} that the tree has a vertex $x \in T$ with ``large'' measure: $\varphi_* \mu_G(x) \geq 1-cd_x.$ Putting, for example, $c=1/(\Delta_T+1)$ (where $\Delta_T$ is the maximal vertex degree in $T$), the previous results gives a non-trivial lower bound on the degree of $\varphi$ in terms of $\lambda_G, |G|$ and $\Delta_T$ of the form
$$\deg(\varphi) \geq \min\left\{ \frac{\lambda_G}{2},1\right\} \frac{|G|}{\Delta_T+1}.$$ However, we want to find a bound that solely depends on $G$, not on $T$. For this, in the next proposition, we engineer another harmonic morphism from a different refinement of $G$, but whose degree is controlled by that of $\varphi$. 
\end{remark}

\begin{proposition} \label{change} Let $G$ denote a graph with maximal degree $\Delta_G$. Let $A, B,C>0$ be constants such that $A+B+C \leq 1$. If $G'$ is a refinement of $G$, and $\varphi \colon G' \rightarrow T$ a finite harmonic morphism such that 
\begin{enumerate}  
\item[\textup{(i)}] $(T,\varphi_*\mu_G)$ is not $(C/2)$-thick; and  
\item[\textup{(ii)}] all vertices $x \in T$ have measure $\varphi_*\mu_G(x)<B,$ 
\end{enumerate} then there exists a refinement  $G^\#$ of $G$, a tree $T^\#$, and a finite harmonic morphism $\f^\#:G^\#\to T^\#$ such that 
\begin{enumerate} 
\item[\textup{(a)}] $\deg \f^\#\leq \Delta_G\deg\f$; and 
\item[\textup{(b)}] there exists an edge $e^\#$ of $T^\#$ with $\size_{\varphi_*^\# \mu_G} (e^\#) \geq A/2.$ \end{enumerate}
\end{proposition}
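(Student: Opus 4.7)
The plan has three parts: extract combinatorial data from a distinguished ``bad'' vertex of $T$, propose a candidate triple $(G^\#, T^\#, \varphi^\#)$, and verify the two conclusions.

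First I would apply Remark \ref{oldcase} to the failure of $(C/2)$-thickness in (i): there is a vertex $x \in V(T)$ every component of whose complement $T - x$ has $\varphi_*\mu_G$-measure strictly less than $C/2$. Combined with the upper bound $\varphi_*\mu_G(x) < B$ from (ii) and the arithmetic $A + B + C \le 1$, the total measure of $T - x$ exceeds $A + C$. Moreover, at any preimage $v \in \varphi^{-1}(x)$, summing the harmonicity relation $\sum_{e \in E_v,\, \varphi(e)=e'} r_\varphi(e) = m_\varphi(v)$ over $e' \in E_x$ yields $\sum_{e \in E_v} r_\varphi(e) = d_x \cdot m_\varphi(v)$; since each index is at least one, this gives $d_v \le d_x \cdot m_\varphi(v)$. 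In particular $d_x \le \Delta_{G'} = \Delta_G$ (refinement of $G$ preserves the maximum vertex degree). A greedy procedure now produces a bipartition of the $d_x$ components of $T - x$ into sets $N_1, N_2$ of total measure at least $A/2$ each: fill $N_1$ with components one at a time until its measure first reaches $A/2$; since each component contributes strictly less than $C/2$, $N_1$'s total is then strictly less than $(A+C)/2$, so $N_2$ has measure strictly greater than $(A+C)/2 \ge A/2$.

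Next I would take $T^\#$ to be obtained from $T$ by splitting $x$ into two adjacent vertices $x_1, x_2$ joined by a new edge $e^\#$, redistributing the neighbors of $x$ onto the $x_i$ according to which $N_i$ they belong to; and take $G^\#$ to be a refinement of $G'$ (hence of $G$) obtained by inserting subdivision vertices and, where necessary, leaves on certain edges incident to $X := \varphi^{-1}(x)$. The morphism $\varphi^\# \colon G^\# \to T^\#$ sends $X$ to $x_1$, routes the $N_2$-side of $G^\#$ through the new subdivision vertices mapping to $x_2$, and otherwise coincides with $\varphi$ under the natural identification of $T \setminus \{x\}$ with $T^\# \setminus \{x_1, x_2\}$.

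The size claim (b) is then immediate: under the push-forward $\varphi^\#_*\mu_G$, removing $e^\#$ produces an $x_1$-side containing the $N_1$-components (total measure at least $A/2$) and an $x_2$-side containing the $N_2$-components (total measure at least $A/2$), so $\size_{\varphi^\#_*\mu_G}(e^\#) \ge A/2$. The main obstacle is the degree bound (a): one must assign positive integer indices globally so that $\varphi^\#$ is harmonic at every vertex of $G^\#$ while keeping $\deg \varphi^\# \le \Delta_G \cdot \deg \varphi$. The key numerical input is $d_v \le d_x \cdot m_\varphi(v)$ at each $v \in X$. Choosing $m_{\varphi^\#}(v)$ minimally at each $v \in X$ as the maximum of the harmonic constraints imposed by its edges on each side of $e^\#$ (and each $N_1$-neighbor of $x$), one has $m_{\varphi^\#}(v) \le d_v$; summing over $v \in X$ then yields $\deg \varphi^\# \le \sum_{v \in X} d_v \le d_x \cdot \sum_{v \in X} m_\varphi(v) = d_x \cdot \deg \varphi \le \Delta_G \cdot \deg \varphi$. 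The delicate point is verifying that these local index choices at the vertices of $X$ can be extended coherently across the subdivision vertices and the vertices of $G'$ outside $X$ without further inflating the degree; this is where the precise form of the refinement $G^\#$ (and the possible insertion of auxiliary leaves) earns its keep.
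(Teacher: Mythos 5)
Your overall strategy (split the bad vertex $x$ of $T$ into $x_1,x_2$ and push the $N_2$-side of $\varphi^{-1}(x)$ through new subdivision vertices over $x_2$) is a legitimately different route from the paper's, which instead collapses each side of $T-x_0$ onto a single spine $S^\#$ and builds $T^\#$ from two copies of that spine. But as written your argument has two genuine gaps. First, the degree bound: your blow-up factor at each $v\in\varphi^{-1}(x)$ is (at best) the number $k_2$ of $N_2$-edges at $x$, which you bound by $d_x\leq\Delta_{G'}$ and then assert $\Delta_{G'}=\Delta_G$. That identity is false: a refinement may attach arbitrarily many leaves to a vertex of $G$, so $\Delta_{G'}$ is unbounded over refinements, and a bound $\deg\varphi^\#\leq\Delta_{G'}\deg\varphi$ is useless for Corollary \ref{deafschatting}. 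The paper avoids this by introducing the subdivision-only part $G^s\subset G'$ (which does satisfy $\Delta_{G^s}=\Delta_G$) and its image $T^s=\varphi(G^s)$, partitioning only the components of $T^s-x_0$ into left and right, and discarding the pure-leaf components, which carry no $\mu_G$-mass; the count that enters the degree estimate is then the number of $G_i^\#$ meeting a given $v$, which is at most $\Delta_G$. Your construction needs the same device: leave every component of $T-x$ not meeting $\varphi(G^s)$ on the $x_1$-side.

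Second, the harmonicity repair at $\varphi^{-1}(x)$ is not just ``delicate,'' it is the point where a naive leaf-insertion fails. After rerouting, the constraint from $e^\#$ forces $m_{\varphi^\#}(v)=k_2\, m_\varphi(v)$, so each $N_1$-edge $e'$ at $x_1$ needs its index-sum over $v$ raised by $(k_2-1)m_\varphi(v)$. If you do this by attaching a single leaf edge over $e'$, the new degree-one vertex lies over the far endpoint of $e'$ and violates harmonicity there whenever that endpoint has further edges; you must instead glue in a copy of the entire subtree of $T^\#$ beyond $e'$, mapped with constant index. This is exactly why the paper flattens each side of $T^\#$ to one spine $S^\#$: the correction gadgets are then whole copies of $S^\#$ mapping isomorphically onto a half of $T^\#$, and harmonicity is checked once via the quantity $d^\#(v)$. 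Relatedly, your intermediate claim $m_{\varphi^\#}(v)\leq d_v$ is false (take $d_x=2$ with one edge of index $3$ from $v$ to each side: $d_v=2$ but $m_{\varphi^\#}(v)\geq 3$); only the weaker chain $m_{\varphi^\#}(v)\leq d_x\, m_\varphi(v)$ survives, which is why the first gap is fatal unless repaired.
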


We postpone the proof to the next section, and first discuss the main corollary.

\begin{corollary}[ = Theorem \ref{spectral}] \label{deafschatting}
Let $G$ be a graph with maximal vertex degree $\Delta_G$ and first Laplace eigenvalue $\lambda_G$. The stable gonality of $G$ is bounded from below by
\[\sgon(G)\geq \left\lceil \frac{\lambda_G}{\lambda_G+4(\Delta_G+1)}|G| \right \rceil.\]
\end{corollary}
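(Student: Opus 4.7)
The strategy is to bound from below the degree $d = \deg\varphi$ of an arbitrary finite harmonic morphism $\varphi \colon G' \to T$ from a refinement $G'$ of $G$ to a tree $T$; passing to the infimum then yields the bound for $\sgon(G)$. I will introduce three positive parameters $A, B, C$ with $A + B + C \leq 1$ (to be chosen at the end) and perform a three-way case analysis on the measured tree $(T, \varphi_{*}\mu_G)$, feeding each case into one of Corollary \ref{firstcase}, Lemma \ref{trivialcase}, and Proposition \ref{change} respectively.

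Case 1: if $(T, \varphi_{*}\mu_G)$ is $(C/2)$-thick, Corollary \ref{firstcase} gives directly $d \geq \frac{C\lambda_G}{4}|G|$. Case 2: if the tree is not $(C/2)$-thick but some vertex $x \in \V(T)$ satisfies $\varphi_{*}\mu_G(x) \geq B$, Lemma \ref{trivialcase} yields $d \geq B|G|$. Case 3: otherwise the two hypotheses of Proposition \ref{change} are satisfied, producing a finite harmonic morphism $\varphi^\# \colon G^\# \to T^\#$ with $\deg\varphi^\# \leq \Delta_G\, d$ whose target tree has an edge $e^\#$ with $\size_{\varphi^\#_{*}\mu_G}(e^\#) \geq A/2$; applying Proposition \ref{Cheeger} to $\varphi^\#$ at $e^\#$ then gives $\Delta_G\, d \geq \deg\varphi^\# \geq \frac{A\lambda_G}{4}|G|$, so $d \geq \frac{A\lambda_G}{4\Delta_G}|G|$.

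The final step is to optimize $A, B, C$ so that the three lower bounds coincide while saturating $A + B + C = 1$. Setting $B = \frac{C\lambda_G}{4} = \frac{A\lambda_G}{4\Delta_G}$ forces $C = 4B/\lambda_G$ and $A = 4B\Delta_G/\lambda_G$, and the constraint then reads
\[ B \cdot \frac{\lambda_G + 4\Delta_G + 4}{\lambda_G} = 1, \qquad \text{whence} \qquad B = \frac{\lambda_G}{\lambda_G + 4(\Delta_G+1)}. \]
Consequently $d \geq B|G|$ in all three cases, and since $d$ is a positive integer the ceiling bound follows at once. The substantive obstacle — eliminating the dependence of Case 3 on the ambient tree's vertex degree $\Delta_T$ by trading $\varphi$ for a better refinement whose target has a balanced edge — is already packaged into Proposition \ref{change}; modulo that input, the remaining argument is a clean case split followed by a constrained optimization.
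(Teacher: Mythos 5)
Your proposal is correct and follows essentially the same route as the paper's proof: the identical three-way case split (thick tree via Corollary \ref{firstcase}, heavy vertex via Lemma \ref{trivialcase}, and otherwise Proposition \ref{change} combined with Proposition \ref{Cheeger}), followed by the same equalization of the three bounds under the constraint $A+B+C=1$, yielding $B=\lambda_G/(\lambda_G+4(\Delta_G+1))$. The integrality step giving the ceiling is also as in the paper.
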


\begin{proof}
Let $G$ be a graph and let $\varphi:G'\to T$ be a finite harmonic morphism, and let $A,B,C$ be as in Proposition \ref{change}. If $(T,\varphi_*\mu_G)$ is $C/2$-thick, then by Corollary \ref{firstcase}, we find 
$$ \deg \varphi \geq \gamma \cdot |G| \mbox{ with } \gamma:=\frac{C\lambda_G }{4}.$$
On the other hand, if there is a vertex $x \in T$ with $\varphi_*\mu_G(x) \geq B,$ then by Lemma \ref{trivialcase}, we get 
$$\deg \varphi \geq  \beta \cdot |G| \mbox{ with } \beta:=B.$$
In the remaining case, Proposition \ref{change} implies that 
$$ \deg \varphi \geq \frac{1}{\Delta_G} \deg \varphi^\# \geq \frac{\lambda_G}{2\Delta_G}\size_{\varphi^\#_*\mu_G}(e^\#)|G| \geq\alpha \cdot  |G| \mbox{ with } \alpha:=\frac{A\lambda_G }{4 \Delta_G},$$
where the second inequality follows from Proposition \ref{Cheeger}. 

Translating the constraints $A,B,C>0$ and $A+B+C \leq 1$, we conclude that it always holds that 
\begin{align} \label{simp} \frac{\deg\f}{|G|}\geq \max_{{\alpha, \beta, \gamma >0}\atop{a \alpha + b \beta + c \gamma \leq 1}}\min(\alpha, \beta, \gamma). \end{align}
with $a = 4\Delta_G/\lambda_G$, $b=1$ and $c=4/\lambda_G$. 
The maximum in (\ref{simp}) is achieved for $\alpha=\beta=\gamma$ and $a \alpha+ b \beta+ c \gamma = 1$, and plugging this back into (\ref{simp})  gives the result.
\end{proof}

\section{Proof of Proposition \ref{change}} \label{ppchange}

In the proof of Proposition \ref{change} we will use the following concept several times to construct new graphs from old:

\begin{definition} Let $H_1,\dots ,H_n$ denote $n$ different graphs and let $(v^1_1,\dots ,v^n_1),\dots,(v^1_m,\dots,v^n_m)\in \V(H_1)\times\dots\times\V(H_n)$ denote tuples of their vertices. The graph $H$  obtained by \emph{gluing} $H_i$ along these vertices is defined to be
$$H:=\ \quotient{\bigsqcup\limits_{i=1}^n H_i}{\langle v^1_1=\dots=v^n_1,\dots,v^1_m=\dots=v^n_m\rangle }.$$
\end{definition}

\begin{lemma}\label{restrictieisharmonic}
Let $\f:G\to T$ be a finite harmonic morphism and let $T_0\subset T$ be a connected subgraph. Then the restriction of $\f$ to any of the connected components of $\varphi^{-1}(T_0)$ is a finite harmonic morphism to $T_0$.
\end{lemma}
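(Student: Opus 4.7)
The plan is to verify that the restriction of $\varphi$ to a connected component $C$ of $\varphi^{-1}(T_0)$ satisfies each of the three conditions in Definition \ref{defmor}. First I would take as data for the restricted map the vertex set $\V(C)$, the edge set $\E(C)$, with incidence inherited from $G$, and the indices $r_{\varphi|_C}(e) := r_\varphi(e)$ for $e\in\E(C)$. The map $\varphi|_C$ sends $\V(C)\cup\E(C)$ into $\V(T_0)\cup\E(T_0)$ by the very definition of $C\subseteq \varphi^{-1}(T_0)$, and incidence is preserved because it is preserved by $\varphi$. Positivity of the indices is inherited from $\varphi$, so conditions (1) and the integrality requirement are immediate.

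The core step is the harmonicity condition. For each $v\in\V(C)$, I propose to set $m_{\varphi|_C}(v) := m_\varphi(v)$. The key observation that makes this work is the following pullback claim: for any $v\in\V(C)$ and any edge $e'\in \E_{\varphi(v)}(T_0)$, the sets
\[
\{e\in \E_v(G) : \varphi(e)=e'\} \quad \text{and}\quad \{e\in \E_v(C) : \varphi(e)=e'\}
\]
coincide. Indeed, any edge $e\in\E_v(G)$ with $\varphi(e)=e'$ has its two endpoints mapping to the endpoints of $e'$, which both lie in $\V(T_0)$; hence $e$ and its other endpoint lie in $\varphi^{-1}(T_0)$, and since $v\in C$ and $C$ is a connected component, the edge $e$ and its other endpoint must lie in $C$. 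Granting this, the harmonicity identity for $\varphi$ at $v$ with respect to $e'$ transfers verbatim to $\varphi|_C$:
\[
m_{\varphi|_C}(v)=m_\varphi(v)=\sum_{\substack{e\in\E_v(G)\\ \varphi(e)=e'}} r_\varphi(e)=\sum_{\substack{e\in\E_v(C)\\ \varphi|_C(e)=e'}} r_{\varphi|_C}(e).
\]

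Finally, I would address the degenerate case in which $\varphi(v)$ has no incident edges in $T_0$ (which can happen, e.g., if $\varphi(v)$ is an isolated vertex of $T_0$, or if $T_0$ is a single vertex): in this case Definition \ref{defmor} allows $m_{\varphi|_C}(v)$ to be chosen as any positive integer, so setting $m_{\varphi|_C}(v)=m_\varphi(v)$ still works. I do not expect any real obstacle here; the only subtle point is the pullback claim above, which rests essentially on the fact that a connected component is closed under taking edges whose preimage is forced to stay inside $\varphi^{-1}(T_0)$.
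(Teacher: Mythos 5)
Your proposal is correct and follows essentially the same route as the paper: the paper's proof rests on exactly your pullback claim, namely that every edge of $\E_v(G)$ mapping to an edge of $\E_{\f(v)}(T_0)$ already lies in the chosen connected component, whence $m_{\f_0}(v)=m_\f(v)$. Your write-up merely spells out the justification (endpoints of such an edge land in $\V(T_0)$, so the edge stays in the component of $v$) and the degenerate case in slightly more detail than the paper does.
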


\begin{proof}
Let $T_0$ be such a subgraph and let $\f_0:G_0\to T_0$ be the restriction of $\f$ to one of the connected components $G_0$ of $\f^{-1}(T_0)$. 
Then $m_{\varphi_0}(v)$ is well-defined for all $v \in V(G_0)$, namely, all edges $e\in\E_v(G)$ which are mapped to an edge $e'\in E_{\varphi(v)}(T_0)$ are contained in $\E_v(G_0)$, so that in particular $m_{\f_0}(v)=m_\f(v)$. 
\end{proof}

\begin{proof}[Proof of Proposition \ref{change}.] The proof consists of various steps, in which the new morphism is constructed from the original map. As the proof proceeds, we will show the constructions on an explicit non-trivial example. The example is printed in smaller font, and the switch between example and main proof is indicated by a diamond ($\Diamond$). 
The basic idea of the proof is this: the hypothesis of the Proposition (which is the remaining bad case from the point of view of getting useful bounds) is that there exists a vertex in the tree (with controlled measure) such that all connected components of its complement have too small measure to give a useful bound. We collect these components (and their preimages) into two sets, which we call the ``left'' and ``right'' parts, such that each of these parts has a larger, more useful, measure. For technical reasons, one should discard leaves first. We want to map each of these parts as a whole onto a ``smaller'' tree, but this will increase the degree of the map. We control it as follows: in the new tree $T^\#$, all of these components are glued to a central vertex. The original morphism $\varphi$ is split into local parts over each of the components, and these are refined to a harmonic morphism over the left or right parts of the new tree. Then the left and right parts are glued together over the new tree, where the indices at preimages of the central point are redefined, and maybe some leaves are added, to make the result into a harmonic morphism. One then checks that the degree of the new morphism hasn't increased too much, but that both edges sticking out of the central vertex have large enough measure to give a useful bound. \\ 

\begin{footnotesize}
\noindent \textbf{Example.}\ We start with a graph $G$ of the following form 
\begin{center} 
\begin{tikzpicture}[scale=0.4,every node/.style={transform shape}]
\SetVertexNormal[Shape      = circle,
                 FillColor  = black,
                 LineWidth  = 1pt]
   \Vertex[x=1 ,y=9,L=b]{1}
   \Vertex[x=5 ,y=7,L=a]{2}
   \Vertex[x=9,y=9,L=d]{3}
   \Vertex[x=7 ,y=5,L=f]{4}
   \Vertex[x=7 ,y=2,L=g]{5}
   \Vertex[x=9 ,y=5,L=e]{6}
   \Vertex[x=5 ,y=4,L=a]{7}
   \Vertex[x=3 ,y=2,L=h]{8}
    \Vertex[x=1 ,y=5,L=b]{9}
    \Vertex[x=3 ,y=5,L=h]{10}
\SetUpEdge[lw = 1pt, color = black]
   \Edge(1)(2)
  \Edge(1)(10)
   \Edge(2)(10)
    \Edge(2)(7)
    \Edge(2)(4)
    \Edge(2)(3)
    \Edge(3)(4)
     \Edge(3)(6)
     \Edge(4)(5)
     \Edge(5)(6)
     \Edge(5)(7)
     \Edge(6)(7)
     \Edge(7)(8)
     \Edge(7)(9)      
     \Edge(8)(9)
     \Edge(8)(10)
 \end{tikzpicture}            
\end{center}
The original graph has $|G|=10$ vertices and maximal vertex degree $\Delta_G=5$. 

We consider the following refinement $G'$ of the graph $G$, and the harmonic morphism $\varphi$ to the tree $T$ as indicated in the following picture. 

We use the following \emph{display conventions:} the original vertices and edges are bold, contrary to subdivision vertices and vertices and edges from leaves. The label on a vertex indicates to what vertex in the tree it is mapped. A gray square box with a number on an edge indicates that this edge has index equal to that number; if there is no box, then the index is $1$. 

\begin{center}
\begin{tikzpicture}[scale=0.7,every node/.style={transform shape}]
\SetVertexNormal[Shape      = circle, LineWidth  = 2pt]
   \Vertex[x=1 ,y=9,L=b]{1}
   \Vertex[x=5 ,y=7,L=a]{2}
   \Vertex[x=9,y=9,L=d]{3}
   \Vertex[x=7 ,y=5,L=f]{4}
   \Vertex[x=7 ,y=2,L=g]{5}
   \Vertex[x=9 ,y=5,L=e]{6}
   \Vertex[x=5 ,y=4,L=a]{7}
   \Vertex[x=3 ,y=2,L=h]{8}
    \Vertex[x=1 ,y=5,L=b]{9}
     \Vertex[x=3 ,y=5,L=h]{10}
     
      \SetVertexNormal[Shape      =circle, LineWidth  = 1pt]

          \renewcommand{\VertexInterMinSize}{10pt}  
          
   \Vertex[x=0,y=10,L=B]{11}      
      \Vertex[x=0,y=6,L=B]{12}              
   \Vertex[x=4,y=9,L=e]{13}      
   \Vertex[x=6,y=9,L=g]{14}      
   \Vertex[x=4,y=2,L=d]{15}  
      \Vertex[x=6,y=2,L=f]{16}  
      
      \Vertex[x=3,y=3.5,L=H]{17}
      \Vertex[x=5,y=5.5,L=c]{18}
            \Vertex[x=2,y=7,L=a]{19}
                  \Vertex[x=2,y=3.5,L=a]{20}
                        \Vertex[x=8,y=7,L=a]{21}
                              \Vertex[x=9,y=7,L=a]{22}
                                    \Vertex[x=8,y=3.5,L=a]{23}
                                          \Vertex[x=7,y=3.5,L=a]{24}

  \Vertex[x=1,y=8,L={\tiny c}]{19a}
    \Vertex[x=1,y=7.5,L={\tiny d}]{19b}
      \Vertex[x=1,y=7,L={\tiny e}]{19c}
        \Vertex[x=1,y=6.5,L={\tiny f}]{19d}
             \Vertex[x=1,y=6,L={\tiny g}]{19e}
\Vertex[x=1,y=4,L={\tiny c}]{20a}
    \Vertex[x=1,y=3.5,L={\tiny d}]{20b}
      \Vertex[x=1,y=3,L={\tiny e}]{20c}
        \Vertex[x=1,y=2.5,L={\tiny f}]{20d}
             \Vertex[x=1,y=2,L={\tiny g}]{20e}

               \Vertex[x=10,y=8,L={\tiny b}]{22a}
      \Vertex[x=11,y=8,L={\tiny B}]{22aa}
    \Vertex[x=10,y=7.5,L={\tiny h}]{22b}
       \Vertex[x=11,y=7.5,L={\tiny H}]{22bb}
      \Vertex[x=10,y=7,L={\tiny c}]{22c}
        \Vertex[x=10,y=6.5,L={\tiny f}]{22d}
             \Vertex[x=10,y=6,L={\tiny g}]{22e}
       
               \Vertex[x=9,y=4,L={\tiny b}]{23a}
      \Vertex[x=10,y=4,L={\tiny B}]{23aa}
    \Vertex[x=9,y=3.5,L={\tiny h}]{23b}
       \Vertex[x=10,y=3.5,L={\tiny H}]{23bb}
      \Vertex[x=9,y=3,L={\tiny c}]{23c}
        \Vertex[x=9,y=2.5,L={\tiny f}]{23d}
             \Vertex[x=9,y=2,L={\tiny d}]{23e}

               \Vertex[x=7.3,y=7.5,L={\tiny b}]{21a}
      \Vertex[x=6.8,y=7.5,L={\tiny B}]{21aa}
    \Vertex[x=7.3,y=7,L={\tiny h}]{21b}
       \Vertex[x=6.8,y=7,L={\tiny H}]{21bb}
      \Vertex[x=7.3,y=6.5,L={\tiny c}]{21c}
        \Vertex[x=8,y=6,L={\tiny f}]{21d}
             \Vertex[x=8.5,y=6,L={\tiny g}]{21e}

               \Vertex[x=6.4,y=4,L={\tiny b}]{24a}
      \Vertex[x=5.9,y=4,L={\tiny B}]{24aa}
    \Vertex[x=6.4,y=3.5,L={\tiny h}]{24b}
       \Vertex[x=5.9,y=3.5,L={\tiny H}]{24bb}
      \Vertex[x=6.4,y=3,L={\tiny c}]{24c}
        \Vertex[x=7.5,y=4.3,L={\tiny f}]{24d}
             \Vertex[x=7.5,y=3.8,L={\tiny g}]{24e}

\draw[line width=1pt,->] (5,2) -- (5,0) node[pos=.5,right] {{\Large $\varphi$}};
             
 
 \SetVertexNormal[Shape      = circle, LineWidth  = 1pt]

 \Vertex[x=3,y=0,L=B]{q1}
 \Vertex[x=4,y=-1,L=b]{q2}
 \Vertex[x=6,y=0,L=c]{q3}
 \Vertex[x=5,y=-2,L=a]{q4}
 \Vertex[x=3,y=-4,L=H]{q5}
 \Vertex[x=4,y=-3,L=h]{q6}
 \Vertex[x=7,y=-1,L=d]{q7}
 \Vertex[x=8,y=-2,L=e]{q8}
 \Vertex[x=7,y=-3,L=f]{q9}
 \Vertex[x=6,y=-4,L=g]{q10}
 
 \SetUpEdge[lw         = 2pt,
           color      = black,
           labelcolor = lightgray]

   \Edge(1)(2)
   \Edge(2)(10)
    \Edge(2)(4)
    \Edge(2)(3)
     \Edge(5)(7)
     \Edge(6)(7)
     \Edge(7)(8)
     \Edge(7)(9)      
            
\SetUpEdge[lw         = 1pt,
           color      = black,
           labelcolor = lightgray]              
  \Edge(1)(11)
    \Edge(9)(12)
    \Edge(2)(13) \Edge(2)(14) 
    \Edge(7)(15) \Edge(7)(16)
    
      \Edge(q1)(q2) \Edge(q2)(q4) \Edge(q5)(q6) \Edge(q6)(q4) \Edge(q4)(q3) \Edge(q4)(q7) \Edge(q4)(q8) \Edge(q4)(q9) \Edge(q4)(q10)
    
    \SetUpEdge[lw         = 2pt,
           color      = black,
          labelcolor = lightgray]

    \Edge(1)(19) \Edge(19)(10)
    \Edge(2)(18) \Edge(18)(7)
    \Edge(3)(21) \Edge(21)(4)
    \Edge(3)(22) \Edge(22)(6)
    \Edge(5)(23) \Edge(23)(6)
    \Edge(8)(20) \Edge(20)(9)
    \Edge(4)(24) \Edge(24)(5)
    \Edge[label={\tiny\textbf{2}}](8)(17) \Edge[label={\tiny\textbf{2}}](17)(10){2}

      \SetUpEdge[lw         = 1pt, color      = black]

    \Edge(19)(19a)   \Edge(19)(19b)   \Edge(19)(19c)   \Edge(19)(19d)   \Edge(19)(19e)  
      \Edge(20)(20a)   \Edge(20)(20b)   \Edge(20)(20c)   \Edge(20)(20d)   \Edge(20)(20e)  
      \Edge(22)(22a)   \Edge(22)(22b)   \Edge(22)(22c)   \Edge(22)(22d)   \Edge(22)(22e)  \Edge(22a)(22aa) \Edge(22b)(22bb)
      \Edge(23)(23a)   \Edge(23)(23b)   \Edge(23)(23c)   \Edge(23)(23d)   \Edge(23)(23e)  \Edge(23a)(23aa) \Edge(23b)(23bb)
      \Edge(21)(21a)   \Edge(21)(21b)   \Edge(21)(21c)   \Edge(21)(21d)   \Edge(21)(21e)  \Edge(21a)(21aa) \Edge(21b)(21bb)
        \Edge(24)(24a)   \Edge(24)(24b)   \Edge(24)(24c)   \Edge(24)(24d)   \Edge(24)(24e)  \Edge(24a)(24aa) \Edge(24b)(24bb)

    \end{tikzpicture}
\end{center}
This map has degree $\deg \varphi = 8$. The push-down $\varphi_* \mu_G$ of the normalized counting measure of $G$ to $T$ takes the following value on the indicated vertices: 
$$ \varphi_*\mu(v) = \left\{ \begin{array}{ll} 0 & \mbox{ if } v\in\{B,c,H\}, \\ 1/10 & \mbox{ if } v \in \{d,e,f,g\}, \\ 1/5 & \mbox{ if } v \in \{a,b,h\}. \end{array} \right. $$
If we set $$ A=1/5, B=3/10, C=5/10, $$
then the set-up satisfies conditions (i) and (ii) of Proposition \ref{change}. Indeed, The push-down measure is not $C/2=1/4$-thick (all connected components of $T-a$ are of measure at most $1/5$), and all vertices have measure smaller than $B=3/10$. 

In the proof, we construct a new refinement $G^\#$ of $G$ and a new tree $T^\#$ with a finite harmonic morphism $\varphi^\# \colon G^\# \rightarrow T^\#$ of degree $\deg \varphi^\#< \Delta_G \deg \varphi =40$ with an edge of size $\geq A/2=1/10$. $\Diamond$
\end{footnotesize} \\

Write $\nu=\varphi_* \mu_G$. Since $(T,\nu)$ is not $C/2$-thick, we can choose a vertex $x_0 \in \V(T)$ such that all components of $T-x_0$ have measure $<C/2$. 
Let $G^s\subset G'$ be the refinement of $G$ which only comprises the subdivided edges of $G$ in $G'$, and let $T^s$ denote its image $\f(G^s)$. Observe that $x_0\in\V(T^s)$, and let $d$ denote the degree of $x_0$ in $T^s$. Also note that the maximal degree of $G^s$ is the same as that of $G$, i.e., $\Delta_{G^s}=\Delta_G$. 
Denote by $T^s_1,\dots, T^s_{d}$ the connected components  of $T^s-x_0$. We divide the connected components into two sets of approximately the same measure. Since $\nu(x_0)<B$ and $\nu(T_i)<C/2$ for all $i=1,\dots,d$, it is possible to find a partition $I_L\cup I_R=\{1,\dots,d\}$, such that 
\[\min\left\{\nu(\bigcup_{i\in I_L}T^s_i),\nu(\bigcup_{i\in I_R}T^s_i)\right\} \geq\frac{1-\nu(x_0)}{2}-\frac{C}{2}>\frac{1-B}{2}-\frac{C}{2}\geq \frac{A}{2}.\]

\begin{footnotesize}
\noindent \textbf{Example (continued).}\ In the example, $G^s$ is the graph 
\begin{center}
\begin{tikzpicture}[scale=0.4,every node/.style={transform shape}]
\SetVertexNormal[Shape      = circle,
                 FillColor  = black,
                 LineWidth  = 1pt]
                 
              \Vertex[x=1 ,y=9,L=b]{1}
   \Vertex[x=5 ,y=7,L=a]{2}
   \Vertex[x=9,y=9,L=d]{3}
   \Vertex[x=7 ,y=5,L=f]{4}
   \Vertex[x=7 ,y=2,L=g]{5}
   \Vertex[x=9 ,y=5,L=e]{6}
   \Vertex[x=5 ,y=4,L=a]{7}
   \Vertex[x=3 ,y=2,L=h]{8}
    \Vertex[x=1 ,y=5,L=b]{9}
     \Vertex[x=3 ,y=5,L=h]{10}

     \SetVertexNormal[Shape      = circle,
                 FillColor  = white,
                 LineWidth  = 1pt]

      \Vertex[x=3,y=3.5,L=$\mbox{ }$]{17}
      \Vertex[x=5,y=5.5,L=$\mbox{ }$]{18}
            \Vertex[x=2,y=7,L=$\mbox{ }$]{19}
                  \Vertex[x=2,y=3.5,L=$\mbox{ }$]{20}
                        \Vertex[x=8,y=7,L=$\mbox{ }$]{21}
                              \Vertex[x=9,y=7,L=$\mbox{ }$]{22}
                                    \Vertex[x=8,y=3.5,L=$\mbox{ }$]{23}
                                          \Vertex[x=7,y=3.5,L=$\mbox{ }$]{24}
 \SetUpEdge[lw         = 1pt, color      = black ]

   \Edge(1)(2)
   \Edge(2)(10)
    \Edge(2)(4)
    \Edge(2)(3)
     \Edge(5)(7)
     \Edge(6)(7)
     \Edge(7)(8)
     \Edge(7)(9)      
          
    \Edge(1)(19) \Edge(19)(10)
    \Edge(2)(18) \Edge(18)(7)
    \Edge(3)(21) \Edge(21)(4)
    \Edge(3)(22) \Edge(22)(6)
    \Edge(5)(23) \Edge(23)(6)
    \Edge(8)(20) \Edge(20)(9)
    \Edge(4)(24) \Edge(24)(5)
    \Edge(8)(17) \Edge(17)(10)
    \end{tikzpicture}
\end{center}
and $T^s$ is the graph 
\begin{center}
\begin{tikzpicture}[scale=0.7,every node/.style={transform shape}]
  
 \SetVertexNormal[Shape      = circle, LineWidth  = 1pt]

 \Vertex[x=4,y=-1,L=b]{q2}
 \Vertex[x=6,y=0,L=c]{q3}
 \Vertex[x=5,y=-2,L=a]{q4}
 \Vertex[x=3,y=-4,L=H]{q5}
 \Vertex[x=4,y=-3,L=h]{q6}
 \Vertex[x=7,y=-1,L=d]{q7}
 \Vertex[x=8,y=-2,L=e]{q8}
 \Vertex[x=7,y=-3,L=f]{q9}
 \Vertex[x=6,y=-4,L=g]{q10}
 
  \SetUpEdge[lw         = 1pt, color      = black ]

\Edge(q2)(q4) \Edge(q5)(q6) \Edge(q6)(q4) \Edge(q4)(q3) \Edge(q4)(q7) \Edge(q4)(q8) \Edge(q4)(q9) \Edge(q4)(q10)

     \end{tikzpicture}
\end{center}
We can choose $x_0$ to be the central vertex labelled ``a'', so that all components of $T-x_0$ have measure $<C/2=1/4$ (indeed, they have measure $0$, $1/5$ or $1/10$), and divide the $d=7$ connected components into ``left'' and ``right'' as follows: 
\begin{align*}  & x_0 = a, \\ & I_L =\left\{T_1=\{b\},T_2=\{h,H\}\right\},\\ & I_R =\left\{T_3=\{c\},T_4=\{d\},T_5=\{e\},T_6=\{f\},T_7=\{g\}\right\}. \end{align*}
Then the total measure of the left part is $2/5$ and the total measure of the right part is $2/5$, which is larger than $A/2=1/10$. 
$\Diamond$
\end{footnotesize}\\

We now show how to construct the different pieces of the new map $\f^\# \colon G^\# \rightarrow T^\#$. \\

\noindent \textbf{The construction of $T^\#$.}\ For each $i=1,\dots,d$ let $y_i\in\V(T^s_i)$ be the unique vertex which is adjacent to $x_0$ in $T^s$. A new graph $S^\#$ is obtained by gluing all $T^s_i$ together at the $y_i$, and adding a leaf at the image of the $y_i$. Call $x$ the new vertex of the added leaf. Take two copies $(S^{\#,1},x_1)$ and $(S^{\#,2},x_2)$ of the pair $(S^\#,x)$, and glue them together at $x_1$ and $x_2$ to obtain a tree $T^\#$. Let $X_0$ be the image of $x_1$ and $x_2$ in $T^\#$, and call the image of $S^{\#,1}$ in $T^\#$  the ``left part''  $T^\#_L$, and the image of $S^{\#,2}$ in $T^\#$ the ``right part'' $T^\#_R$. \\ 

\begin{footnotesize}
\noindent \textbf{Example (continued).}\ In the example, $S^\#$ is the graph given by gluing all components $T_1,\dots,T_7$ of $T^s-x_0$ along their vertex adjacent to $x_0$, and adding a leaf. All but the component $T_2=\{h,H\}$ are isolated vertices, so the result is a segment isomorphic to $T_2$, connected to a new vertex $x$ at $h$: $S^\#$ is 
\begin{tikzpicture}[scale=0.4,every node/.style={transform shape}, baseline=-\the\dimexpr\fontdimen22\textfont2\relax ]
  \SetVertexNormal[Shape      = circle, LineWidth  = 1pt]
 \Vertex[x=5,y=0,L=x]{q4}
 \Vertex[x=3,y=0,L=$\mbox{ }$]{q5}
 \Vertex[x=4,y=0,L=$\mbox{ }$]{q6}
     \SetUpEdge[lw         = 1pt, color      = black, labelcolor = lightgray]      
\Edge(q5)(q6) \Edge(q6)(q4) 
 \end{tikzpicture}
. $T^\#$ is the graph given by gluing two copies of $(S^\#,x)$ along the common vertex $x$, so $T^\#$ is
 \begin{tikzpicture} [scale=0.4,every node/.style={transform shape}, baseline=-\the\dimexpr\fontdimen22\textfont2\relax ]
 \SetVertexNormal[Shape      = circle,LineWidth  = 1pt]
 \Vertex[x=5,y=0,L=$X_0$]{center}
 \Vertex[x=4,y=0,L={\mbox{ }}]{links1}
 \Vertex[x=3,y=0,L={\mbox{ }}]{links2}
 \Vertex[x=6,y=0,L={\mbox{ }}]{rechts1}
  \Vertex[x=7,y=0,L={\mbox{ }}]{rechts2}
 \SetUpEdge[lw         = 1pt, color      = black ]
\Edge(center)(links1) \Edge(links1)(links2) \Edge(center)(rechts1) \Edge(rechts1)(rechts2) 
    \end{tikzpicture}
.$\Diamond$
\end{footnotesize}\\

\noindent \textbf{The construction of $G^\#$.}\ 
For each $i=1,\dots,d$, let $S_i$ be the subtree of $T^s$ obtained by adding to $T^s_i$ the (unique) edge in $\E(x_0,y_i)$. The subgraph $\f^{-1}(S_i)\subset G'$ might be disconnected; let $G_i''$ be the union of the connected components of $\f^{-1}(S_i)$ for which the set of edges has a non-empty intersection with $\E(G^s)$. By Lemma \ref{restrictieisharmonic} the restriction $$\varphi_i'':=\f|_{G_i''}: G_i''\to S_i.$$ is finite harmonic.

Now observe that $S^\#$ is a refinement of $S_i$, so by Lemma \ref{boompje} there exists a finite harmonic refinement morphism (in the sense of Definition \ref{defref}) 
\[\varphi^\#_i: G_i^\#\to S^\#,\]
with an inclusion map $\iota_i:G_i''\to G_i^\#$.  \\

\begin{footnotesize}
\noindent \textbf{Example (continued).}\ For each of the seven connected components $T_i$, we display the construction of the ``local'' components $S_i$, $G_i''$ and the refinement morphism $ \varphi_i^\# \colon G_i^\# \rightarrow S^\#$ in Table \ref{ttable}. The construction for $i=5,6,7$ is entirely similar to the one for $i=4$ (with the label $d$ replaced by $e,f,g$, respectively), so we don't list it in the table. $\Diamond$
\end{footnotesize}

\newpage

{
\begin{table}[h] \caption{The local constructions relating to the maps $ \varphi_i^\# \colon G_i^\# \rightarrow S^\#$ ($i=5,6,7$ are similar to $i=4$)} \label{ttable}

\begin{tabular}{lMMMM}
\hline
$i$ & $1 (\in I_L)$ & $2 (\in I_L)$ & $3 (\in I_R)$ & $4 (\in I_R)$ \\ \hline
$S_i$ &\begin{tikzpicture}[scale=0.6,every node/.style={transform shape}, baseline=-\the\dimexpr\fontdimen22\textfont2\relax ]
  \SetVertexNormal[Shape      = circle, LineWidth  = 1pt]
 \Vertex[x=5,y=-1,L=a]{q4}
 \Vertex[x=4,y=-1,L=b]{q6}
     \SetUpEdge[lw         = 1pt, color      = black, labelcolor = lightgray]    
     \Edge(q6)(q4)   
 \end{tikzpicture} 
 
 &  
 
 \begin{tikzpicture}[scale=0.6,every node/.style={transform shape}, baseline=-\the\dimexpr\fontdimen22\textfont2\relax ]
  \SetVertexNormal[Shape      = circle, LineWidth  = 1pt]
 \Vertex[x=5,y=-1,L=a]{q4}
 \Vertex[x=3,y=-1,L=H]{q5}
 \Vertex[x=4,y=-1,L=h]{q6}
     \SetUpEdge[lw         = 1pt, color      = black, labelcolor = lightgray]      
\Edge(q5)(q6) \Edge(q6)(q4) 

 \end{tikzpicture}
 
 &
 
 \begin{tikzpicture}[scale=0.6,every node/.style={transform shape}, baseline=-\the\dimexpr\fontdimen22\textfont2\relax ]
  \SetVertexNormal[Shape      = circle, LineWidth  = 1pt]
 \Vertex[x=5,y=-1,L=c]{q4}
 \Vertex[x=4,y=-1,L=a]{q6}
     \SetUpEdge[lw         = 1pt, color      = black, labelcolor = lightgray]    
     \Edge(q6)(q4)   
 \end{tikzpicture}
 
  &
  
  \begin{tikzpicture}[scale=0.6,every node/.style={transform shape}, baseline=-\the\dimexpr\fontdimen22\textfont2\relax ]
  \SetVertexNormal[Shape      = circle, LineWidth  = 1pt]
 \Vertex[x=5,y=-1,L=d]{q4}
 \Vertex[x=4,y=-1,L=a]{q6}
     \SetUpEdge[lw         = 1pt, color      = black, labelcolor = lightgray]    
     \Edge(q6)(q4)   
 \end{tikzpicture} 
 
   \\ \hline \\
   
$G_i''$ & 

\begin{tikzpicture}[scale=0.6,every node/.style={transform shape}, baseline=-\the\dimexpr\fontdimen22\textfont2\relax ]
  \SetVertexNormal[Shape      = circle, LineWidth  = 1pt]
 \Vertex[x=1 ,y=7,L=b]{1}
   \Vertex[x=5 ,y=5,L=a]{2}
   \Vertex[x=5 ,y=2,L=a]{7}
    \Vertex[x=1 ,y=3,L=b]{9}     
       \Vertex[x=2,y=5,L=a]{19}
                  \Vertex[x=2,y=1.5,L=a]{20}
     
 \SetUpEdge[lw         = 1pt, color      = black, labelcolor = lightgray]      
     \Edge(1)(19)
     \Edge(1)(2)
     \Edge(9)(7)
     \Edge(9)(20)
        \end{tikzpicture}

& 

 \begin{tikzpicture}[scale=0.6,every node/.style={transform shape}, baseline=-\the\dimexpr\fontdimen22\textfont2\relax ]
  \SetVertexNormal[Shape      = circle, LineWidth  = 1pt]
 \Vertex[x=5 ,y=5,L=a]{2}
   \Vertex[x=5 ,y=2,L=a]{7}
   \Vertex[x=3 ,y=0,L=h]{8}
        \Vertex[x=3 ,y=3,L=h]{10}
     \Vertex[x=3,y=1.5,L=H]{17}
       \Vertex[x=2,y=5,L=a]{19}
 \Vertex[x=2,y=1.5,L=a]{20}
     \SetUpEdge[lw         = 1pt, color      = black, labelcolor = lightgray]      
  \Edge(2)(10)
       \Edge(7)(8)
       \Edge(10)(19)
       \Edge(8)(17) \Edge(17)(10)
       \Edge(8)(20)
        \end{tikzpicture}
& 

 \begin{tikzpicture}[scale=0.6,every node/.style={transform shape}, baseline=-\the\dimexpr\fontdimen22\textfont2\relax ]
  \SetVertexNormal[Shape      = circle, LineWidth  = 1pt]
 \Vertex[x=5 ,y=3,L=a]{2}
     \Vertex[x=5 ,y=0,L=a]{7}
    \Vertex[x=5,y=1.5,L=c]{18}
     \SetUpEdge[lw         = 1pt, color      = black, labelcolor = lightgray]      
 \Edge(2)(18) \Edge(18)(7)
        \end{tikzpicture}

 &  \begin{tikzpicture}[scale=0.6,every node/.style={transform shape}, baseline=-\the\dimexpr\fontdimen22\textfont2\relax ]
  \SetVertexNormal[Shape      = circle, LineWidth  = 1pt]
   \Vertex[x=5 ,y=0,L=a]{2}
   \Vertex[x=9,y=2,L=d]{3}
   \Vertex[x=8,y=0,L=a]{21}
    \Vertex[x=9,y=0,L=a]{22}

     \SetUpEdge[lw         = 1pt, color      = black, labelcolor = lightgray]    
 \Edge(2)(3) 
    \Edge(3)(21)
    \Edge(3)(22) \end{tikzpicture} 
 
 \\ \hline 
 
 \\

$ \varphi_i^\#$ & 

\begin{tikzpicture}[scale=0.6,every node/.style={transform shape}, baseline=-\the\dimexpr\fontdimen22\textfont2\relax ]
  \SetVertexNormal[Shape      = circle, LineWidth  = 1pt]
 \Vertex[x=1 ,y=9,L=b]{1}
   \Vertex[x=5 ,y=7,L=a]{2}
   \Vertex[x=5 ,y=4,L=a]{7}
    \Vertex[x=1 ,y=5,L=b]{9}
       \Vertex[x=0,y=10,L=B]{11}      
      \Vertex[x=0,y=6,L=B]{12}    
         \Vertex[x=2,y=7,L=a]{19}
                  \Vertex[x=2,y=3.5,L=a]{20}

      \SetUpEdge[lw         = 1pt, color      = black, labelcolor = lightgray]      
     \Edge(1)(19)
     \Edge(1)(2)
     \Edge(9)(7)
     \Edge(9)(20)
     \Edge[label={\tiny\textbf{2}}](11)(1)
     \Edge[label={\tiny\textbf{2}}](12)(9)

   \draw[line width=1pt,->] (3,1) -- (3,0) node[pos=.5,right] {{\large $\deg \varphi_1^\# = 4$}};

         \SetVertexNormal[Shape      = circle, LineWidth  = 1pt]
 \Vertex[x=4,y=-1,L=a]{q4}
 \Vertex[x=2,y=-1,L=B]{q5}
 \Vertex[x=3,y=-1,L=b]{q6}
     \SetUpEdge[lw         = 1pt, color      = black, labelcolor = lightgray]      
\Edge(q5)(q6) \Edge(q6)(q4) 

        \end{tikzpicture}

&
        
        \begin{tikzpicture}[scale=0.6,every node/.style={transform shape}, baseline=-\the\dimexpr\fontdimen22\textfont2\relax ]
  \SetVertexNormal[Shape      = circle, LineWidth  = 1pt]
 \Vertex[x=5 ,y=7,L=a]{2}
   \Vertex[x=5 ,y=4,L=a]{7}
   \Vertex[x=3 ,y=2,L=h]{8}
        \Vertex[x=3 ,y=5,L=h]{10}
     \Vertex[x=3,y=3.5,L=H]{17}
       \Vertex[x=2,y=7,L=a]{19}
 \Vertex[x=2,y=3.5,L=a]{20}
     \SetUpEdge[lw         = 1pt, color      = black, labelcolor = lightgray]      
  \Edge(2)(10)
       \Edge(7)(8)
       \Edge(10)(19)
     \Edge[label={\tiny\textbf{2}}](8)(17) \Edge[label={\tiny\textbf{2}}](17)(10){2}
       \Edge(8)(20)
       
    \draw[line width=1pt,->] (3,1) -- (3,0) node[pos=.5,right] {{\large $\deg \varphi_2^\# = 4$}};

         \SetVertexNormal[Shape      = circle, LineWidth  = 1pt]
 \Vertex[x=4,y=-1,L=a]{q4}
 \Vertex[x=2,y=-1,L=H]{q5}
 \Vertex[x=3,y=-1,L=h]{q6}
     \SetUpEdge[lw         = 1pt, color      = black, labelcolor = lightgray]      
\Edge(q5)(q6) \Edge(q6)(q4)

        \end{tikzpicture}

        & 
        
     \begin{tikzpicture}[scale=0.6,every node/.style={transform shape}, baseline=-\the\dimexpr\fontdimen22\textfont2\relax ]
  \SetVertexNormal[Shape      = circle, LineWidth  = 1pt]
 \Vertex[x=5 ,y=5,L=a]{2}
     \Vertex[x=5 ,y=2,L=a]{7}
    \Vertex[x=5,y=3.5,L=c]{18}
    \Vertex[x=6.5,y=3.5,L=$\mbox{ }$]{f}
     \SetUpEdge[lw         = 1pt, color      = black, labelcolor = lightgray]      
 \Edge(2)(18) \Edge(18)(7) \Edge[label={\tiny\textbf{2}}](f)(18)
    
            \draw[line width=1pt,->] (5,1) -- (5,0) node[pos=.5,right] {{\large $\deg \varphi_3^\# = 2$}};
            
         \SetVertexNormal[Shape      = circle, LineWidth  = 1pt]
 \Vertex[x=6.5,y=-1,L=$\mbox{ }$]{q4}
 \Vertex[x=3.5,y=-1,L=a]{q5}
 \Vertex[x=5,y=-1,L=c]{q6}
     \SetUpEdge[lw         = 1pt, color      = black, labelcolor = lightgray]      
\Edge(q5)(q6) \Edge(q6)(q4) 

        \end{tikzpicture}

        &

         \begin{tikzpicture}[scale=0.6,every node/.style={transform shape}, baseline=-\the\dimexpr\fontdimen22\textfont2\relax ]
  \SetVertexNormal[Shape      = circle, LineWidth  = 1pt]
   \Vertex[x=5 ,y=2,L=a]{2}
   \Vertex[x=9,y=4,L=d]{3}
   \Vertex[x=8,y=2,L=a]{21}
    \Vertex[x=9,y=2,L=a]{22}
    \Vertex[x=10,y=5,L=$\mbox{ }$]{ff}

     \SetUpEdge[lw         = 1pt, color      = black, labelcolor = lightgray]    
 \Edge(2)(3) 
    \Edge(3)(21)
    \Edge(3)(22) \
    \Edge[label={\tiny\textbf{3}}](3)(ff)

           \draw[line width=1pt,->] (8,1) -- (8,0) node[pos=.5,right] {{\large $\deg \varphi_4^\# = 3$}};
    
             \SetVertexNormal[Shape      = circle, LineWidth  = 1pt]
 \Vertex[x=9.5,y=-1,L=$\mbox{ }$]{q4}
 \Vertex[x=6.5,y=-1,L=a]{q5}
 \Vertex[x=8,y=-1,L=d]{q6}
     \SetUpEdge[lw         = 1pt, color      = black, labelcolor = lightgray]      
\Edge(q5)(q6) \Edge(q6)(q4)

    \end{tikzpicture}

\\ \hline
\end{tabular}
\end{table}
}

\noindent \textbf{The construction of $G^\#$ (continued).}\ 
 Define, for any $v\in\f^{-1}(x_0)\cap \V(G^s)$, the integer
\[d^\#(v):=\sum_{i\in I_L}\left(\sum_{e\in\E_v(G^s)\cap \E(G_i^\#)}r_{\f_i^\#}(e)\right)-\sum_{i\in I_R}\left(\sum_{e\in\E_v(G^s)\cap \E(G_i^\#)}r_{\f_i^\#}(e)\right).\]

 The graph $G^\#$ is obtained by gluing all $G^\#_i$ together at $\iota_i(v)$ for all $v\in\f^{-1}(x_0)\cap G_i''$, and, for any $v$ with $d^\#(v) \neq 0$, 
gluing an additional copy $(S^{\#,v},x_v)$ of $(S^\#,x)$ at $v$.\\

\begin{footnotesize}
\noindent \textbf{Example (continued).}\  In the example, the vertices $v \in \varphi^{-1}(x_0)$ are the numbered vertices in the following display of $G^s$: 
\begin{center}
\begin{tikzpicture}[scale=0.6,every node/.style={transform shape}]
\SetVertexNormal[Shape      = circle,
                 FillColor  = black,
                 LineWidth  = 1pt]
                 
              \Vertex[x=1 ,y=9,L=b]{1}

   \Vertex[x=9,y=9,L=d]{3}
   \Vertex[x=7 ,y=5,L=f]{4}
   \Vertex[x=7 ,y=2,L=g]{5}
   \Vertex[x=9 ,y=5,L=e]{6}

   \Vertex[x=3 ,y=2,L=h]{8}
    \Vertex[x=1 ,y=5,L=b]{9}
     \Vertex[x=3 ,y=5,L=h]{10}
         \Vertex[x=3,y=3.5,L=$\mbox{ }$]{17}
      \Vertex[x=5,y=5.5,L=$\mbox{ }$]{18}

     \SetVertexNormal[Shape      = circle,
                 FillColor  = white,
                 LineWidth  = 1pt]
          \Vertex[x=2,y=7,L=1]{19}
             \Vertex[x=5 ,y=7,L=2]{2}
               \Vertex[x=2,y=3.5,L=5]{20}
                        \Vertex[x=8,y=7,L=3]{21}
                              \Vertex[x=9,y=7,L=4]{22}
                                    \Vertex[x=8,y=3.5,L=8]{23}
                                          \Vertex[x=7,y=3.5,L=7]{24}
                                             \Vertex[x=5 ,y=4,L=6]{7}

 \SetUpEdge[lw         = 1pt, color      = black ]

   \Edge(1)(2)
   \Edge(2)(10)
    \Edge(2)(4)
    \Edge(2)(3)
     \Edge(5)(7)
     \Edge(6)(7)
     \Edge(7)(8)
     \Edge(7)(9)      
          
    \Edge(1)(19) \Edge(19)(10)
    \Edge(2)(18) \Edge(18)(7)
    \Edge(3)(21) \Edge(21)(4)
    \Edge(3)(22) \Edge(22)(6)
    \Edge(5)(23) \Edge(23)(6)
    \Edge(8)(20) \Edge(20)(9)
    \Edge(4)(24) \Edge(24)(5)
    \Edge(8)(17) \Edge(17)(10)
    \end{tikzpicture}
\end{center}
The vertices labeled 1 and 5 have only neighbouring subgraphs $G_i^\#$ with $i \in I_L$, and the indices are all one, and add up to $2$. The vertices labeled 3, 4, 7 and 8 have only neighbouring subgraphs $G_i^\#$ with $i \in I_R$, and the indices are all one, and add up to $2$. Finally, the vertices labeled 2 and 6  have neighbouring subgraph $G_2^\#$ with index in $I_L$,  and neighbouring subgraphs $G_3^\#$ and two of $G_i^\#$ (with $i=4,5,6,7$) with index in $I_R$, for which all the indices are all one. Thus, 
 $$ d^\#(v) = \left\{ \begin{array}{ll} 1+1=2 & \mbox{ if } v \mbox{ has label 1 or 5}; \\ 1+1-(1+1+1)=-1 & \mbox{ if } v \mbox{ has label 2 or 6}; \\ -(1+1) = -2 & \mbox{ if } v \mbox{ has label 3,4, 7 or 8}. \end{array}\right. $$
Thus, we need to glue in one extra copy of $S^\#$ at every such vertex. $\Diamond$
\end{footnotesize}\\

\noindent \textbf{The construction of $\f^\#$.}\ 
To define $\f^\#$, it suffices to define its restriction to $G_i^\#$ for $i=1,\dots,d$, and its restriction to $S^{\#,v}$, and show that these are compatible on intersections.  Define the restrictions as follows: 
\begin{enumerate}
\item If $i \in I_L$, set $\f^\#|_{G_i^\#} \colon G_i^\# \xrightarrow{\f_i^\#}S^\# \isomto T^\#_L$ with index $r_{\f^\#}(e)=r_{\f^\#_i}(e)$ for all $e \in G_i^\#$; 
\item If $i \in I_R$, set $\f^\#|_{G_i^\#} \colon G_i^\# \xrightarrow{\f_i^\#}S^\# \isomto T^\#_R$ with index $r_{\f^\#}(e)=r_{\f^\#_i}(e)$ for all $e \in G_i^\#$;
\item If $v \in \f^{-1}(x_0)$ with $d^\#(v) > 0$, set $\f^\#|_{S^{\#,v}} \colon S^{\#,v} \isomto T^\#_R$ with index $r_{\f^\#}(e)=d^\#(v)$ for all $e \in S^{\#,v}$; 
\item If $v \in \f^{-1}(x_0)$ with $d^\#(v) < 0$, set $\f^\#|_{S^{\#,v}} \colon S^{\#,v} \isomto T^\#_L$ with index $r_{\f^\#}(e)=-d^\#(v)$ for all $e \in S^{\#,v}$.  
\end{enumerate}
One checks that this glues together correctly to a finite graph morphism $\f^\# \colon G^\# \rightarrow T^\#$. \\

\begin{footnotesize}
\noindent \textbf{Example (continued).}\ The extra copies of $S^\#$ that are glued to vertices 1, 3, 4, 5, 7, 8 get index 2 on every edge, but the copies that are glued to 2 and 6 get index one.   The final re-engineered map $\varphi^\# \colon G^\# \rightarrow T^\#$ in our example is given as follows: 

\begin{center}
\begin{tikzpicture}[scale=0.7,every node/.style={transform shape}]
\SetVertexNormal[Shape      = circle, LineWidth  = 2pt]
   \Vertex[x=1 ,y=9,L=L]{1}
   \Vertex[x=5 ,y=7,L=a]{2}
   \Vertex[x=9,y=9,L=R]{3}
   \Vertex[x=7 ,y=5,L=R]{4}
   \Vertex[x=7 ,y=2,L=R]{5}
   \Vertex[x=9 ,y=5,L=R]{6}
   \Vertex[x=5 ,y=4,L=a]{7}
   \Vertex[x=3 ,y=2,L=L]{8}
    \Vertex[x=1 ,y=5,L=L]{9}
     \Vertex[x=3 ,y=5,L=L]{10}
     
      \SetVertexNormal[Shape      =circle, LineWidth  = 1pt]
                 \renewcommand{\VertexInterMinSize}{10pt}  
   \Vertex[x=0,y=10,L=l]{11}      
      \Vertex[x=0,y=6,L=l]{12}              
   \Vertex[x=5,y=8,L=L]{13}  
     \Vertex[x=5,y=9,L=l]{14a}         
   \Vertex[x=5,y=3,L=L]{15}  
      \Vertex[x=5,y=2,L=l]{16a}  
      \Vertex[x=3,y=3.5,L=l]{17}
      \Vertex[x=5,y=5.5,L=R]{18}
        \Vertex[x=6,y=5.5,L=r]{18a}
            \Vertex[x=2,y=7,L=a]{19}
            \Vertex[x=1,y=7,L={\tiny R}]{19a}
              \Vertex[x=0,y=7,L={\tiny r}]{19b}
                  \Vertex[x=2,y=3.5,L=a]{20}
                    \Vertex[x=1,y=3.5,L={\tiny R}]{20a}
                      \Vertex[x=0,y=3.5,L={\tiny r}]{20b}
                        \Vertex[x=8,y=7,L=a]{21}
                          \Vertex[x=7,y=7,L={\tiny L}]{21a}    
                           \Vertex[x=6,y=7,L={\tiny l}]{21b}     
                            \Vertex[x=9,y=7,L=a]{22}
                              \Vertex[x=10,y=7,L={\tiny L}]{22a}     
  \Vertex[x=11,y=7,L={\tiny l}]{22b}     

                                    \Vertex[x=8,y=3.5,L=a]{23}
                                      \Vertex[x=9,y=3.5,L={\tiny L}]{23a}     
  \Vertex[x=10,y=3.5,L={\tiny l}]{23b}  
                                          
\Vertex[x=7,y=2.7,L=a]{24}
\Vertex[x=7.5,y=3.8,L={\tiny L}]{24a}     
\Vertex[x=8.3,y=4.5,L={\tiny l}]{24b}  

\Vertex[x=10,y=10,L=r]{25}
\Vertex[x=8,y=1,L=r]{26}
 
\draw[line width=1pt,->] (5,1.5) -- (5,-0.5) node[pos=.5,right] {{\Large $\varphi^\#$}};
             
 
 \SetVertexNormal[Shape      = circle, LineWidth  = 1pt]

\Vertex[x=5,y=-1,L=a]{center}
 \Vertex[x=4,y=-1,L=L]{links1}
 \Vertex[x=3,y=-1,L=l]{links2}
 \Vertex[x=6,y=-1,L=R]{rechts1}
  \Vertex[x=7,y=-1,L=r]{rechts2}
  \SetUpEdge[lw         = 2pt,
           color      = black]

   \Edge(1)(2)
   \Edge(2)(10)
    \Edge(2)(4)
    \Edge(2)(3)
     \Edge(5)(7)
     \Edge(6)(7)
     \Edge(7)(8)
     \Edge(7)(9)      
 
\SetUpEdge[lw         = 2pt,
           color      = black ]

    \Edge(1)(19) \Edge(19)(10)
    \Edge(2)(18) \Edge(18)(7)
    \Edge(3)(21) \Edge(21)(4)
    \Edge(3)(22) \Edge(22)(6)
    \Edge(5)(23) \Edge(23)(6)
    \Edge(8)(20) \Edge(20)(9)
    \Edge(4)(24) \Edge(24)(5)
    \Edge(8)(17) \Edge(17)(10){2}

    \SetUpEdge[lw         = 1pt,
           color      = black,
           labelcolor = lightgray]              
  \Edge[label={\tiny\textbf{2}}](1)(11)
    \Edge[label={\tiny\textbf{2}}](9)(12)
    \Edge(2)(13) \Edge(13)(14a) 
    \Edge(7)(15) \Edge(15)(16a)
    
    \Edge[label={\tiny\textbf{2}}](19)(19a) \Edge[label={\tiny\textbf{2}}](19a)(19b)
       \Edge[label={\tiny\textbf{2}}](20)(20a) \Edge[label={\tiny\textbf{2}}](20a)(20b)
        \Edge[label={\tiny\textbf{2}}](21)(21a) \Edge[label={\tiny\textbf{2}}](21a)(21b)
         \Edge[label={\tiny\textbf{2}}](22)(22a) \Edge[label={\tiny\textbf{2}}](22a)(22b)
          \Edge[label={\tiny\textbf{2}}](23)(23a) \Edge[label={\tiny\textbf{2}}](23a)(23b)
           \Edge[label={\tiny\textbf{2}}](24)(24a) \Edge[label={\tiny\textbf{2}}](24a)(24b)
     \Edge[label={\tiny\textbf{2}}](18)(18a)
     
  \Edge[label={\tiny\textbf{3}}](3)(25)
    \Edge[label={\tiny\textbf{3}}](5)(26)
    
       \Edge(center)(links1) \Edge(links1)(links2) \Edge(center)(rechts1) \Edge(rechts1)(rechts2)

              \end{tikzpicture}
\end{center}
Here, the vertex $X_0$ is labelled ``a''. The map $\varphi$ has degree $18$ and the edges $\{L,a\}$ and $\{a,R\}$ of $T^\#$ both have size $2/5>A/2=1/10$.  $\Diamond$ 
\end{footnotesize} \\

\noindent\textbf{The finite morphism $\f^\#$ is harmonic.}\ 
We check that  $m_{\f^\#}(v)$ is well-defined for all $v\in\V(G^\#)$. For all $v\not\in\f^{\#-1}(X_0)$ there is either a unique $i=1,\dots,d$ such that $v\in\V(G^\#_i)$ and then $m_{\f^\#}(v)=m_{\f_i''}(v)$, or there is a unique $w\in\f^{\#-1}(X_0)$ such that $v\in\V(S^{\#,x_w})$ and then $m_{\f^\#}(v)=|d_{\f^\#}(w)|$. For all $v\in\f^{\#-1}(X_0)$, it holds that 
\[m_{\f^\#}(v)=\max\left\{\sum_{i\in I_L}\left(\sum_{e\in\E_v(G^\#)\cap \E(G_i^\#)}r_{\f^\#}(e)\right),\sum_{i\in I_R}\left(\sum_{e\in\E_v(G^\#)\cap \E(G_i^\#)}r_{\f^\#}(e)\right)\right\}.\]

\noindent\textbf{The edge $e^\#$.}\
 Any of the two edges $e^\#\in\E_{X_0}(T^\#)$ satisfies
\[\size_{\varphi^\#_*\mu_G}(e^\#)\geq\min\left\{\nu(\bigcup_{i\in I_L}T''_i),\nu(\bigcup_{i\in I_R}T''_i)\right\}>\frac{A}{2}.\]

\noindent \textbf{The degree of $\f^\#$.}\
Consider a vertex $v \in \V(G^\#)\cap\f^{-1}(x_0)$: if $v$ belongs to $G^s$, then it belongs to at most $\Delta_G$ different $G^\#_i$ for $i=1,\dots,d$; and if $v \notin G^s$, then there is a unique $i=1,\dots,d$ such that the unique path from $v$ to $G^s$ is contained in 
$G''_i$.
 It follows that for each $v\in\V(G^\#)\cap\f^{-1}(x_0)$, at most $\Delta_G$ of the neighboring $G_i^\#$ are either all sent to $T^\#_L$, or all to $T^\#_R$. Hence $$m_{\f^\#}(v)\leq \Delta_G m_\f(v),$$ and this implies $\deg \f^\# \leq \Delta_G \deg \f$. 
\end{proof}

\begin{remark}
 The point $x_0$ (used in the proof) with the property that all components of $T-x_0$ have measure $<C/2$ is in fact \emph{unique}. Indeed, if there are two such vertices, say, $x_0$ and $x_1$, then let $e=(x,y)$ denote any edge on a path between $x_0$ and $x_1$. One component of $T-x$ contains $T_1(e)$ and one component of $T-y$ contains $T_2(e)$, and hence by assumption, $\nu(T_1(e))<C/2$ and $\nu(T_2(e))<C/2$ but $\nu(T_1(e))+\nu(T_2(e))=1$. Hence $C>1$, but this is impossible with $C\leq 1-A-B$ and $A,B>0$.
\end{remark}

\section{Discussion of the spectral lower bound on stable gonality} 
We now give some examples that illustrate the bound. 

\begin{example}\label{bananensplit}
For the banana graph $B_n$, we have $\Delta_{B_n}=n$, $|B_n|=2$ and $\lambda_{B_n}=2n$, so the lower bound is trivial: $\mathrm{sgon}(B_n) \geq 1$. However, the stable gonality of $B_n$ (for $n \geq 2$), equals $2$. See Figure \ref{figbanana} for such a map of degree $2$. 
\end{example}

\begin{example} \label{ASM}
For the complete bipartite graph $K_{n,n}$, we have $\Delta_{K_{n,n}}=n$, $|K_{n,n}|=2n$ and $\lambda_{K_{n,n}}=n$. If $n$ is even, then the lower bound is $$\mathrm{sgon}(K_{n,n}) \geq \left\lceil \frac{2n^2}{5n+4} \right\rceil,$$

We expect that the stable gonality of $K_{n,n}$ equals $n$. 
A morphism which attains degree $n$ is given by mapping $K_{n,n}$ to the star with one central vertex and $n$ emanating edges in the obvious way. For $n=p^r+1$ ($p$ prime), the graph $K_{n,n}$ occurs as stable reduction graph of the curve $$ X_{\lambda,r} \colon (x^{p^r}-x)(y^{p^r}-y) =  \lambda$$ (seen in $\PP^1 \times \PP^1$) with $|\lambda|<1$, over a valued field $(k,|\cdot|)$ of characteristic $p$, which, as a fiber product of two projective lines,  admits an obvious morphism of degree $p^r+1$ to $\PP^1$.  The stable reduction itself consist of two transversally intersecting families of $p^r+1$ rational curves (``check board with $p^{2r}$ squares''). For more details on these curves, see for example \cite{CKK}. 
\end{example}

In the family of Example \ref{ASM}, our lower bound has the same order of growth in $n$ as does the expected gonality, but is about 5 times as small. This seems to be a general phenomenon; we don't know an interesting example where our lower bound is sharp. 

\begin{example} \label{rama}
If $X_{n}$ is a family of Ramanujan graphs with (fixed) regularity $d$ and $n$ vertices ($n$ increasing), then by the Alon-Boppana bound, we get inequalities $$\sqrt{d-1}-o(1) \leq \lambda_{X_n}/2 \leq  \sqrt{d-1},$$ so we find a lower bound of the form 
$$  \mathrm{sgon}(X_n) \geq  \kappa_d \cdot n $$
for $n$ sufficiently large with $\kappa_d$ a constant only depending on $d$. In any family of Mumford curves whose stable reduction graphs are $d$-regular Ramanujan graphs, the gonality goes to infinity as the number of components of the stable reduction does so. 
\end{example}

\begin{remark} \label{LYfout} The famous Li--Yau inequality from differential geometry \cite{LiYau} states that the gonality $\mathrm{gon}(X)$ of a compact Riemann surface $X$ (minimal degree of a conformal mapping $\varphi$ of $X$ to the Riemann sphere) is bounded below by
$$ \mathrm{gon}(X) \geq \frac{1}{8 \pi} \lambda_X \mathrm{vol}(X), $$
where $\lambda_X$ is the first non-trivial eigenvalue of the Laplace-Beltrami operator of $X$, and $\mathrm{vol}(X)$ denotes the volume of $X$.  

For graphs $G$ with any Laplacian (normalized or not),  an inequality of the form 
$$ \text{``}\mathrm{sgon}(G) \geq \kappa \cdot \lambda_G \cdot \mathrm{vol}(G)\text{''}  \ \ \ (\ast) $$
for some constant $\kappa$ fails. 
A counterexample is given by the complete graph $K_{n}$, which has stable gonality $n-1$. However, a lower bound of the form $(\ast)$ would be $\kappa \cdot  n^2(n-1) $ for the usual Laplacian, and $\kappa \cdot n^2$ for the normalized Laplacian (see Table \ref{table} in the appendix for the data; one deduces that the analog of the Li--Yau inequality also fails if one uses any of the other notions of gonality from the existing literature and are outlined in the appendix.) 

One sees from our result that in a graph, the constant $\kappa$ needs to be roughly divided by the maximal edge degree for such an inequality to hold. 

\end{remark}

As we have seen in Corollary \ref{steq}, stable gonality is defined on equivalence classes of graphs, in the sense that two graphs $G$ and $G'$ are equivalent (notation $G \sim G'$) if they are refinements of the same stable graph. Hence the result also implies that 

\begin{corollary} For any graph $G$ with $g \geq 2$, we have 
$$\mathrm{sgon}(G) \geq \max_{G' \sim G} \left\lceil \frac{\lambda_{G'}}{\lambda_{G'}+4(\Delta_{G'}+1)} |G'| \right\rceil. $$ \hfill $\Box$ \end{corollary}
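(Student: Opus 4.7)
The plan is to combine the two results invoked just before the corollary, namely Corollary \ref{deafschatting} (the spectral lower bound $\sgon(G) \geq \lceil \lambda_G |G|/(\lambda_G + 4(\Delta_G+1)) \rceil$ for any graph $G$) and Corollary \ref{steq} (invariance of stable gonality under refinement, so that $\sgon$ descends to a function on the equivalence classes $G \sim G'$).

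First I would fix a graph $G$ with $g \geq 2$, and let $G'$ be any graph equivalent to $G$ in the sense of Corollary \ref{steq}, i.e., $G$ and $G'$ are both refinements of a common stable graph. Applying Corollary \ref{deafschatting} directly to $G'$ yields
\[
\sgon(G') \geq \left\lceil \frac{\lambda_{G'}}{\lambda_{G'}+4(\Delta_{G'}+1)} |G'| \right\rceil.
\]
By Corollary \ref{steq}, $\sgon$ is constant on the equivalence class, so $\sgon(G) = \sgon(G')$. Hence the inequality above holds with $\sgon(G)$ on the left.

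Since this holds for every $G' \sim G$, taking the maximum over all such $G'$ preserves the inequality, giving exactly the stated bound. The hypothesis $g \geq 2$ enters only through Corollary \ref{steq}, which is only formulated for graphs of first Betti number at least $2$ (where ``stable graph'' is well-defined and refinement-equivalence is a genuine equivalence relation). There is no real obstacle here; the content is entirely in the earlier spectral bound, and the corollary simply observes that one is free to optimize the right-hand side over the equivalence class before applying it.
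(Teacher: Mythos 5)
Your proposal is correct and is exactly the paper's (essentially one-line) argument: apply Corollary \ref{deafschatting} to each $G'$ in the equivalence class and use Corollary \ref{steq} to identify $\sgon(G')$ with $\sgon(G)$ before maximizing. Nothing is missing.
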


\begin{remark}
It is tempting to consider the limiting value for the lower bound in this theorem when the graph is further and further refined. Whereas it is clear how the number of vertices and the maximal vertex degree change under refinements, the change of the eigenvalue under refinements is not so well-understood (apart from regular graphs). For applications in solid state physics, Eichinger and Martin have developed an algorithm that computes the change in eigenvalues under refinement by applying only linear algebra to the original Laplace matrix \cite{EM}. Examples (such as the banana graph) suggest that (iterated)  refinement might worsen the lower bound. 
\end{remark}

There is a similar result for the normalized Laplacian. Denote with $\tilde \lambda_G$ the first non-trivial eigenvalue of $L^\sim_G$.

\begin{theorem}\label{genormaliseerdegrens} Let $G$ be a graph with maximal degree $\Delta_G$ and first normalized Laplace eigenvalue $\tilde{\lambda}_G$. The stable gonality of $G$ is bounded from below by
\[\sgon(G)\geq \frac{\tilde \lambda_G}{\Delta_G\tilde\lambda_G+4(\Delta_G+1)}\vol(G).\] 
\end{theorem}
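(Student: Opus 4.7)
The plan is to mirror the proof of Corollary \ref{deafschatting} with two systematic replacements: (i) the uniform probability measure $\mu_G$ on $\V(G')$ is replaced by the degree-weighted probability measure $\tilde{\mu}_G$ defined by $\tilde{\mu}_G(A)=\vol(A\cap \V(G))/\vol(G)$, with degrees taken in $G$; and (ii) the variational characterization of $\lambda_G$ is replaced by the corresponding one for the normalized Laplacian,
\[
\tilde{\lambda}_G \;=\; \inf_{\sum_v d_v f(v)\,=\,0} \frac{\sum_{u\sim v}(f(u)-f(v))^2}{\sum_v d_v f(v)^2}.
\]

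First I would re-derive Proposition \ref{Cheeger} in this setting. For a finite harmonic morphism $\varphi\colon G'\to T$, an edge $e\in \E(T)$, and $G_i=\V(G)\cap\varphi^{-1}(\V(T_i(e)))$, the test function equal to $1/\vol(G_1)$ on $G_1$ and $-1/\vol(G_2)$ on $G_2$ is automatically orthogonal to $D\mathbb{1}$; plugging it into the Rayleigh quotient above and using $\deg\varphi\ge|\E(G_1,G_2)|$ exactly as in the original proof yields $\deg\varphi\ge\tfrac12\,\tilde{\lambda}_G\,\mathrm{size}_{\varphi_*\tilde{\mu}_G}(e)\,\vol(G)$. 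Second, I would prove the normalized analog of Lemma \ref{trivialcase}: using $m_\varphi(v)\ge 1$ and $d_v\le\Delta_G$ for every $v\in\V(G)$,
\[
\deg\varphi\;\ge\;|\varphi^{-1}(x)\cap \V(G)|\;\ge\;\frac{\vol(\varphi^{-1}(x)\cap \V(G))}{\Delta_G}\;=\;\frac{\varphi_*\tilde{\mu}_G(x)\,\vol(G)}{\Delta_G}.
\]
The extra factor $\Delta_G$ here, compared to the unnormalized case, is precisely what will produce the coefficient $\Delta_G$ in front of $\tilde{\lambda}_G$ in the denominator of the final bound.

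The re-engineering of Proposition \ref{change} is purely combinatorial and uses the measure only through its values on subsets of $\V(G')$, so the whole construction applies verbatim with $\tilde{\mu}_G$ in place of $\mu_G$: if $(T,\varphi_*\tilde{\mu}_G)$ is neither $C/2$-thick nor has any vertex of measure at least $B$, it produces $\varphi^\#\colon G^\#\to T^\#$ with $\deg\varphi^\#\le\Delta_G\deg\varphi$ and a $T^\#$-edge of size at least $A/2$. The trichotomy in the proof of Corollary \ref{deafschatting} then yields, for any constants $A,B,C>0$ with $A+B+C\le 1$, the three lower bounds $\deg\varphi/\vol(G)\ge \gamma=C\tilde{\lambda}_G/4$ (thickness case), $\ge \beta=B/\Delta_G$ (large-measure vertex), and $\ge\alpha=A\tilde{\lambda}_G/(4\Delta_G)$ (re-engineering followed by the new Cheeger bound). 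Setting $\alpha=\beta=\gamma$ under $A+B+C\le 1$ gives the constraint $\alpha\bigl(4\Delta_G/\tilde{\lambda}_G+\Delta_G+4/\tilde{\lambda}_G\bigr)\le 1$, yielding the claimed $\tilde{\lambda}_G/(\Delta_G\tilde{\lambda}_G+4(\Delta_G+1))$. I expect the main obstacle to be routine bookkeeping: tracking the new $\Delta_G$ in the $\beta$-bound and confirming that the lengthy construction of Proposition \ref{change} is genuinely insensitive to the choice of measure, since every use of the measure there is additive in nature.
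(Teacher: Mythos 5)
Your proposal is correct and follows exactly the route of the paper's own (sketched) proof: the degree-weighted measure $\tilde{\mu}_G$ is the paper's $\eta_G$, the test function $1/\vol(G_1)$, $-1/\vol(G_2)$ is the one the paper uses, the extra factor $\Delta_G$ enters only through the analogue of Lemma \ref{trivialcase}, and Proposition \ref{change} is indeed measure-independent. The final optimization with $b=\Delta_G$ reproduces the stated constant, so nothing is missing.
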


\begin{proof}[Sketch of proof] The proof is virtually the same as Theorem \ref{spectral}, so instead of providing all details, we briefly outline the differences. Instead of $\mu_G$, we use the measure $\eta_G$ on $V(G')$ defined for $A\subset V(G')$ by  
\[\eta_G(A):=\frac{\sum\limits_{v\in A\cap G}d_{v}^G}{\vol(G)},\] 
where $d_v^G$ is the degree of $v$ in $G$. Lemma \ref{Amini} is valid for all probability measures, and therefore also for $\f_*\eta_G$. Since $\f_*\eta_G(x)$ counts the number of edges instead of vertices, the conclusion of Lemma \ref{trivialcase} changes to $$\deg\f\geq\frac{\f_*\eta_G(x)}{\Delta_G}\vol(G).$$ The analogue of Proposition \ref{Cheeger} can be derived by using the test function 
\[f(v)=\left\{\begin{array}{cl}\frac{1}{\vol(G_1)}&\text{if } v\in G_1,\\-\frac{1}{\vol(G_2)}&\text{if } v\in G_2.\end{array}\right.\]
Proposition \ref{change} does not change for this new measure. 
We conclude that the proof of Corollary \ref{deafschatting} only changes in the step where Lemma \ref{trivialcase} is used.
\end{proof}

\begin{remark} For $k$-regular graphs $$k\tilde\lambda_G=\lambda_G \mbox{ and }\vol(G)/k=|G|,$$ and hence the lower bounds are identical for the two different Laplace operators. 
\end{remark}

\begin{remark}  Generically (in the sense of algebraic geometry), the gonality of a curve attains the Brill--Noether bound (cf.\ for example the references in the Appendix of \cite{Poonen}). However, curves of fixed genus and fixed stable reduction graph can have widely varying gonality (e.g., the banana graph $B_n$ has stable gonality $2$, but its subdivisions can occur as stable reduction graph of curves whose gonality takes on all the values $2,\dots,n$); in particular, the gonality of the curve can be much higher than the stable gonality of the reduction graph. One may try to find the most probable stable gonality of a random connected (multi-)graph and compare it to the most probable value of the Brill--Noether bound. In this remark, we compute something much simpler: the difference between the expected value of the Brill--Noether bound and the lower bound in our theorem, for the Erd\H{o}s--R\'enyi random graph model with a specific connection probability in the non-sparse region.   

For a random graph model $G=G_{n,p}$ of Erd\H{o}s-R\'enyi type \cite{Erdoes} with $n$ vertices and edge probability $p=p(n)=n^{-\delta}$ for some $0<\delta<1$,  the threshold for almost sure connectivity holds, the expected number of edges is $n(n-1)p/2$, so the first Betti number  of $G_{p,n}$ is $g = \frac{1}{2}n^{1-\delta}(n-1)-n+1$ almost surely. Chung, Lu and Vu \cite{CLV} have shown that (for a class of function including these $p(n)$) the normalized eigenvalue $\tilde\lambda$ tends to $1$ with high probability. 
Also, the given assumptions imply that $\Delta_{G_{n,p}}=pn(1+o(1))$ in probability (\cite{B}, 3.14). 

Hence the lower bound tends with high probability to $$\approx \frac{n}{5} \approx \frac{1}{5} \sqrt[2-\delta]{2g}, $$ 
 which is sublinear in $g$ (and for $\delta \rightarrow 0$, tends to $\sqrt{2g}/5$, up to a constant the actual value of the stable gonality $n-1 = \sqrt{2g}$ for the complete graph $K_n$ of genus $g=(n-1)^2/2$), whereas the Brill--Noether bound is linear in $g$ (which happens if $\delta \rightarrow 1$). 

There are at least two ways to interpret this heuristic observation: either the lower bound is asymptotically bad for random graphs; or stable gonality of random graphs is significantly lower than generic gonality of curves. 
\end{remark}

\section{A linear lower bound on the gonality of Drinfeld modular curves: proof of Theorem \ref{DrinGon}}

We recall the main concepts and notations from the theory of general Drinfeld modular curves, cf.\  \cite{GekelerLN}, \cite{GekelerReversat}. 

\begin{se}
Let $K$ denote a global function field of a smooth projective curve $X$ over a finite field $k=\F_q$ with $q$ elements and characteristic $p>0$, and $\infty$ a place of degree $\delta$ of $K$. Let $\pi_\infty$ denote a uniformizer at $\infty$. Let $A$ denote the subring of $K$ of elements that are regular outside $\infty$. 
\end{se} 

\begin{se} Let $Y$ denote a rank-two $A$-lattice in the completion $K_\infty$ of $K$ at $\infty$. Such lattices are classified up to isomorphism by their determinant, so they are isomorphic to $A \oplus I$, where $I$ runs through a set of representatives of $\mbox{Pic}(A)$, the ideal class group of $A$. 

Let $\GL(Y)$ denote the automorphism group of the lattice $Y$: $$\GL(Y)=\{\gamma \in \GL_2(K) \colon \gamma Y = Y\},$$ and let $\Gamma$ denote a congruence subgroup of $\Gamma(Y):=\GL(Y)$. This means that $\Gamma$ contains a principal congruence group $\Gamma(Y,\fn)$ as a finite index subgroup, where $$\Gamma(Y,\fn) = \ker \left( \Gamma(Y) \rightarrow \GL(Y/\fn Y) \right),$$ for $\fn$ an ideal in $A$. Let $Z \cong \F_q^*$ denote the center of $\GL(Y)$. 

If $ Y= A \oplus A$ is the ``standard'' lattice, we revert to the standard notations $\Gamma(1):=\Gamma(A \oplus A)$ and $\Gamma(\fn):=\Gamma(A \oplus A, \fn)$. 
\end{se}

\begin{se}
The groups $\Gamma$ act by fractional transformations on the Drinfeld space $\Omega = \C_\infty - K_\infty$, where $\C_\infty$ is the completion of an algebraic closure of $K_\infty$. The quotient $\Gamma \backslash \Omega$ is an analytic smooth one-dimensional space, and is the analytification of a smooth affine algebraic curve $Y_\Gamma$, that can be defined over a finite abelian extension of $K$ inside $K_\infty$. It can be compactified to a Drinfeld modular curve $X_\Gamma$ by adding finitely many points, called cusps. 

The $\C_\infty$-points of the (coarse) moduli scheme $M(\fn)$ of rank-two Drinfeld $A$-modules with full level $\fn$-structure (i.e., an isomorphism of $(A/\fn)^2$ with the torsion of the Drinfeld module) can be described as 
$$ M(\fn)(\C_\infty) = \bigsqcup_{Y \in \mathrm{Pic}(A)} \Gamma(Y,\fn) \backslash \Omega.$$
We denote such a component by $Y(Y,\fn):=\Gamma(Y,\fn) \backslash \Omega$, and its compactification by $X(Y,\fn)$. 
\end{se} 

\begin{theorem}[= Theorem \ref{DrinGon}] \label{DrinGon2}
Let $\Gamma$ denote a congruence subgroup of $\Gamma(Y)$. Then the gonality of the corresponding Drinfeld modular curve $X_\Gamma$ satisfies 
$$ \mathrm{gon}_{\bar{K}}(X_{\Gamma}) \geq c_{q,\delta} \cdot [\Gamma(Y):\Gamma] $$
where the constant $c_{q,\delta}$ is 
\[c_{q,\delta}:= \frac{q^\delta-2\sqrt{q^\delta}}{5q^\delta-2\sqrt{q^\delta}+8}\cdot\frac{1}{q(q^2-1)}\]
This implies a linear lower bound in the genus of modular curves of the form
$$ \mathrm{gon}_{\bar{K}}(X_{\Gamma}) \geq c'_{K,\delta} \cdot (g(X_\Gamma) -1 ),$$
where $c_{K,\delta}$ is a bound that depends only on the function field $K$. If $K$ is a rational function field and $\delta=1$, then we can put $c'_{K,\delta}=2c_{q,1}$. 
\end{theorem}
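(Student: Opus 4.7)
The plan is to combine the graph-theoretic machinery of the paper (Theorem \ref{compare} and Corollary \ref{deafschatting}) with the classical description of the stable reduction of Drinfeld modular curves in terms of quotients of Bruhat--Tits trees. First I would reduce to the principal level: for any congruence subgroup $\Gamma \supseteq \Gamma(Y,\fn)$ the natural cover $X(Y,\fn) \to X_\Gamma$ has degree at most $[\Gamma:\Gamma(Y,\fn)]$, so composing a minimal-degree map $X_\Gamma \to \PP^1$ with it gives
\[ \gon_{\bar K}(X(Y,\fn)) \le [\Gamma:\Gamma(Y,\fn)] \cdot \gon_{\bar K}(X_\Gamma). \]
The bound for a general $\Gamma$ thus follows from the stronger statement $\gon_{\bar K}(X(Y,\fn)) \ge c_{q,\delta}\cdot [\Gamma(Y):\Gamma(Y,\fn)]$ at principal level.

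\textbf{Reduction-graph combinatorics.} For $\Gamma(Y,\fn)$, the stable reduction graph $G$ at $\infty$ is, up to a refinement adding leaves along the finitely many cuspidal ends, the finite quotient $\Gamma(Y,\fn) \backslash \T$ of the Bruhat--Tits tree $\T$ of $\PGL_2(K_\infty)$; since the residue field of $K_\infty$ has cardinality $q^\delta$, $\T$ is $(q^\delta+1)$-regular, so $\Delta_G \le q^\delta + 1$. A standard orbit-stabilizer count for the action of $\Gamma(Y)$ on $\T$ (whose vertex stabilizers map into $\PGL_2(\F_q)$) then yields $|G| \ge [\Gamma(Y):\Gamma(Y,\fn)] / (q(q^2-1))$, which accounts for the factor $1/(q(q^2-1)) = 1/|\PGL_2(\F_q)|$ in $c_{q,\delta}$. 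By Theorem \ref{compare} it now suffices to bound $\sgon(G)$ from below, and by Corollary \ref{deafschatting} this reduces to a lower bound on $\lambda_G$.

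\textbf{Spectral input from the Ramanujan conjecture.} For $\lambda_G$, I would invoke Drinfeld's proof of the Ramanujan conjecture for $\GL_2$ over function fields, which implies that the adjacency spectrum of $\Gamma(Y,\fn)\backslash\T$ outside the trivial eigenvalues lies in $[-2\sqrt{q^\delta},2\sqrt{q^\delta}]$. Since $G$ is obtained from this quotient by truncating the cuspidal ends (and adding at most finitely many leaves in order to apply Theorem \ref{compare}), Courant--Weyl interlacing transfers this to $\lambda_G \ge q^\delta - 2\sqrt{q^\delta}$. Plugging in $\Delta_G+1 \le q^\delta+2$ and using the monotonicity of $x \mapsto x/(x+c)$, Corollary \ref{deafschatting} yields
\[
\sgon(G) \ge \frac{q^\delta-2\sqrt{q^\delta}}{5q^\delta-2\sqrt{q^\delta}+8}\cdot |G|,
\]
which combined with the vertex count gives exactly $c_{q,\delta}\cdot[\Gamma(Y):\Gamma(Y,\fn)]$. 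I expect the main obstacle here to be making the interlacing rigorous: one must verify that truncating the cusps and inserting the leaves required by Theorem \ref{compare} does not collapse the relevant part of the spectrum, so that Drinfeld's bound really descends to $G$.

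\textbf{Genus version.} For the linear-in-genus bound, I would apply Riemann--Hurwitz to the cover $X_\Gamma \to X_{\Gamma(Y)}$. In the rational-function-field case with $\delta=1$, the base $X_{\Gamma(Y)}$ has genus $0$, so Riemann--Hurwitz immediately gives $g(X_\Gamma)-1 \le \tfrac12[\Gamma(Y):\Gamma]$, and the bound $\gon_{\bar K}(X_\Gamma) \ge c_{q,1}[\Gamma(Y):\Gamma]$ yields $c'_{K,1}=2c_{q,1}$. In general, $X_{\Gamma(Y)}$ has a fixed genus depending only on $K$ and $\delta$, and the subtlety is the possible wild ramification at elliptic points and cusps; this is the only non-routine point in this step, but the wild part is controlled by finitely many local conductors depending only on $K$ and $\delta$, and can therefore be absorbed into the constant $c'_{K,\delta}$.
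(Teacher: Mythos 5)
Your overall strategy coincides with the paper's: reduce to principal level, realize the reduction graph as a quotient of the Bruhat--Tits tree, count vertices via the stabilizer of the special vertex $[\mathcal{O}_\infty\oplus\mathcal{O}_\infty]$, feed Drinfeld's Ramanujan bound into Corollary \ref{deafschatting}, and finish the genus statement with Riemann--Hurwitz. However, the two places you yourself flag as delicate are exactly where your argument is incomplete, and in both cases the fix is not the one you sketch.

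First, the spectral transfer. You propose to pass from the Ramanujan bound on $\Gamma(Y,\fn)\backslash\T$ to the finite graph $G$ by ``Courant--Weyl interlacing'' after truncating the cusps. But $\Gamma(Y,\fn)\backslash\T$ is an \emph{infinite} graph (a finite part with $h$ half-lines attached), and on the cuspidal ends the natural Hecke operator $T_\infty$ carries weights $\alpha_{vw}=[\Stab_\Gamma(v):\Stab_\Gamma(e)]$, so $G$ is not a principal submatrix of a finite symmetric matrix to which interlacing applies. The paper argues in the opposite direction: every eigenfunction of $A_G$ on $G$ \emph{extends} (uniquely, by an explicit recursion, with value $0$ at the first vertex outside $G$) to an eigenfunction of $T_\infty$ on the weighted diagram $\Gamma\backslash\T$ lying in $L_2^0$, where Morgenstern's Eisenstein-series analysis plus Drinfeld gives $|\lambda|\le 2\sqrt{q^\delta}$; Courant--Weyl is then used for a different purpose, namely to absorb the rank-one-per-boundary-vertex discrepancy $B$ between $L_G$ and $q^\delta\mathbf{1}-A_G$ (boundary vertices of $G$ have degree $q^\delta$, not $q^\delta+1$). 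Without some such extension argument your bound $\lambda_G\ge q^\delta-2\sqrt{q^\delta}$ is unjustified. Second, the genus step. Your claim that the wild ramification ``is controlled by finitely many local conductors depending only on $K$ and $\delta$'' is not correct as stated: the cusp of $X(Y)$ has stabilizer whose $p$-part grows with $\fn$, so a priori the higher ramification contribution in Riemann--Hurwitz could grow with $\deg\fn$ and destroy linearity in the genus. The paper's essential input is that the ramification above the cusps is wild but \emph{weak} (second ramification groups trivial), which follows from the ordinarity of the Mumford curves $X(Y,\fn)$ via Nakajima's theorem; only then is the cuspidal term $R_c=\frac{q^{d+1}-2}{(q-1)q^d}\le \frac{q}{q-1}$ bounded independently of $d=\deg\fn$. (A smaller omission: one must also verify that the truncated quotient $\Gamma\backslash\T^{ss}$ really is a semistable reduction graph of the correct genus, which the paper does by an Euler-characteristic computation and by showing every finite path in the unstable part lies on a cuspidal half-line.)
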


\begin{proof} First observe that $\mathrm{gon}_{\bar{K}}(X) = \mathrm{gon}_{\bar{K_\infty}}(X)$, so we now consider $X_\Gamma$ as a curve over $k=K_\infty$ and are in a set up where we can apply our previous results. The remainder of the proof has various parts. 

\medskip

\noindent  \textbf{Reduction to principal congruence groups.}\  First of all, we observe that it suffices to prove the bound for the groups $\Gamma(Y,\fn)$. Indeed, if $\varphi \colon X_\Gamma \rightarrow \PP^1$ is a morphism, then from the inclusion $\Gamma(Y,\fn) \leq \Gamma$ we get a composed morphism 
\begin{equation} \label{princov} X_{\Gamma(Y,\fn)} \rightarrow  X_\Gamma \rightarrow \PP^1 \end{equation}
of degree $$\frac{[\Gamma:\Gamma(Y,\fn)]}{|\Gamma \cap Z|} \cdot \deg \varphi,$$ and hence
\begin{equation} \label{prinineq} \mathrm{gon}_{\bar{K}}(X_\Gamma) \geq \mathrm{gon}_{\bar{K}}(X(Y,\fn)) / [\Gamma:\Gamma(Y,\fn)]. \end{equation}
Therefore, the desired inequality $$\mathrm{gon}_{\bar{K}} (X_\Gamma) \geq c_q [\Gamma(Y):\Gamma]$$ follows from $$\mathrm{gon}_{\bar{K}}(X(Y,\fn)) \geq c_q [\Gamma(Y):\Gamma(Y,\fn)].$$

\medskip

We now prove the gonality bound by invoking Theorem \ref{spectral} for the reduction graph of the Drinfeld modular curve $X(Y,\fn)$. 

\medskip 

\noindent \textbf{Semistable model.} First, we construct a semi-stable model for the reduction of $X(Y,\fn)$ at $\infty$. The groups $\Gamma=\Gamma(Y,\fn)$ also act by automorphisms on the Bruhat--Tits tree $\T$ of $\PGL(2,K_\infty)$ \cite{Serre}. The quotient $\Gamma \backslash \T$ is the union of a finite graph $(\Gamma \backslash \T)^0$ and a finite number of half lines in correspondence with the cusps of $X_\Gamma$, and the curve $X_\Gamma$ is a Mumford curve over $K_\infty$ \cite{Mumford} such that the intersection dual graph of the reduction, which is a 
finite union of rational curves over $\F_{q^\delta}$ intersecting transversally in $\F_{q^\delta}$-rational points, equals the finite graph $(\Gamma \backslash \T)^0$ (\cite{GekelerReversat} (2.7.8)). In particular, the genus of the modular curve $X_\Gamma$ equals the first Betti number of this graph (compare \cite{GekelerLN} V.A.11).

Now consider the $\Gamma$-\emph{stable} part $\T^s$ of $\T$, defined to consist of those vertices and edges of $\T$ that have trivial stabilizer for the action of $\Gamma$. Since the stabilizers of cusps are non-trivial and stabilizers of edges are subgroups of stabilizers of adjacent vertices, the stable part is a tree that ends in half-edges (i.e., edges with only an initial vertex). Let $\T^{ss}$ denote the tree obtained from deleting the half-edges, and call the images in $\Gamma \backslash \T^{ss}$ of the remaining vertices that were incident to the half-lines the \emph{boundary points}; say there are $h_c$ of those, see Figure \ref{drinfeld}. 

\begin{center}
\begin{figure}[h]
\includegraphics[width=10cm]{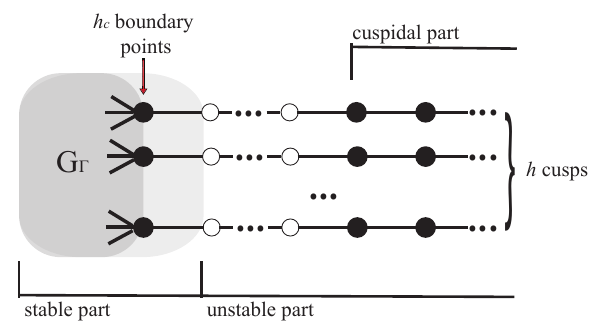}
\caption{Schematic depiction of the quotient graph $\Gamma \backslash \T$, including stable, unstable and cuspidal part.}
\label{drinfeld}
\end{figure}
\end{center}

We claim that the quotient $$G_\Gamma:=\Gamma \backslash \T^{ss}$$ is a semistable reduction graph for $X_\Gamma$. Since it is a subgraph of $(\Gamma \backslash \T)^0$, it suffices to check that these graphs have the same genus. This can be seen as follows. Since $\Gamma=\Gamma(Y,\fn)$ is $p'$-torsion free, \cite{Serre} II.2.9, Ex.\ 2b and Thm.\ 13'(c) imply that $$| \V(\Gamma \backslash \T^s)| - |\E(\Gamma \backslash \T^s)| = \chi(\Gamma) = 1 - g(X_\Gamma) - h,$$ where $h$ is the number of cusps of $\Gamma$ and $\chi(G)$ is the Euler-Poincar\'e characteristic of $G$. Since $h_c = |\E(\Gamma\backslash \T^{s})|-|\E(\Gamma \backslash\T^{ss})|$ and $|\V(\Gamma\backslash \T^{ss})|=|\V(\Gamma \backslash\T^s)|$, we find 
$$ g(G_\Gamma) = g(X_\Gamma)+h-h_c.$$
Now observe that $h \geq h_c$; indeed, $h$ is the number of half-lines of $\Gamma \backslash \T$. A priori several half-lines could be attached to the same boundary point. On the other hand, all paths in the unstable graph attached to boundary vertices have to be part of infinite half-lines (so correspond to cusps). Indeed, if there would be such a \emph{finite} path $P$, let $\tilde{P}$ denote a connected lift of $P$ to $\T$ and $e_P$ any half-line of $\T$ that contains $\tilde{P}$. Then the projection of $e_P$ in $\Gamma \backslash \T$ has to be infinite, since the orders of the stabilizers are strictly increasing along $e_P$, and hence it would have to intersect the stable part $\Gamma \backslash \T^s$. This is impossible, since in the stable part, the stabilizers are trivial. 

Since also $g(G_\Gamma) \leq g(X_\Gamma)$, we conclude that  $h_c=h$ and $g(G_\Gamma) = g(X_\Gamma)$. This means that $\Gamma \backslash \T$ is $G_\Gamma$ connected via $h$ paths to the $h$ cusps. 

If not indicated otherwise, choose $\fn \neq 1$ and write $G:=G_{\Gamma}$.

\medskip

\noindent \textbf{A lower bound on the number of vertices.}\ In the Bruhat--Tits tree $\T$ of $\PGL(2,K_\infty)$, every vertex is ($q^\delta+1$)-regular. Let us consider the special vertex of $\T$ corresponding to the class of the trivial rank-two vector bundle $[\mathcal{O}_\infty \oplus \mathcal{O}_\infty]$ on $X$, and let $v_0$ denote the corresponding vertex in $\Gamma(Y)\backslash \T$. The stabilizer of this vertex is precisely $\PGL(2,\F_{q^\delta})$ (namely, an element of the stabilizer induces an automorphism of the ``star'' of the vertex, which is given by $\PP^1(\F_{q^\delta})$.) The stabilizer intersects $\Gamma(Y)/Z$ (where $Z$ is the center) in the ``constant group'' $\PGL(2,\F_{q})$, and the group $\Gamma=\Gamma(Y,\fn)$ (for $\fn \neq 1$) in the trivial group. There is a covering  map $$\Gamma \backslash \T \rightarrow \Gamma(Y)\backslash \T.$$ 
We conclude that 
\begin{equation} \label{een} |G| \geq \frac{1}{q(q^2-1)} \cdot [\Gamma(Y):\Gamma],  \end{equation} 
since the right hand side is the number of vertices in $\Gamma \backslash \T$ above $v_0$ ---which are stable, since they have trivial stabilizers in $\Gamma$---, and $\PGL(2,\F_q)$ has cardinality $q(q^2-1)$.

\begin{remark}
This estimate for the number of vertices of $G$ will be enough for our purposes, since it differs from the index only by a constant in $q$. But one might also count the total number of vertices of the graph. For a rational function field $K=\F_q(T)$ with a place $\infty$ of degree one, this is easily done, the result being
$$ |G_{\Gamma(\fn)}| = \frac{2q^{\deg(\fn)+1}-q-1}{q^{\deg(\fn)+1}(q^2-1)(q-1)}[\Gamma(1):\Gamma(\fn)];  $$
compare also with computations in \cite{Morgenstern} (cf.\ \cite{Carbone}, \cite{Rust}) and \cite{GekelerNonnengardt}. It seems another proof of the lower bound on the gonality is possible by using Morgenstern's result that there is a perfect matching between a very large (constant fraction depending only on $q$, not on $\deg(\fn)$) subset of the vertices above $v_0$ in $G_{\Gamma(\fn)}$ and vertices in the complement, but we did not pursue this, since it would give a less general and worse result.  
\end{remark}

\begin{remark}
The gonality is \emph{not} always realized by the obvious map $X_\Gamma \rightarrow X(1) \cong \PP^1$. For example, set $q=2$ and let $\p$ denote an prime of degree $3$; then the modular curve $X_0(\p)$ is hyperelliptic, but the map $X_0(\p) \rightarrow X(1)$ has degree $9$. Also notice that for a general base field $K$, the modular curve $X(1)$ is not even itself a rational curve. 
\end{remark} 

\begin{remark} 
Counting the number of cusps (so the number of vertices above a vertex in $\Gamma(Y) \backslash \T$ corresponding to a split bundle of high degree) is not enough to get a linear estimate in the index, since the cusps have rather large stabilizers (of size roughly the third root of the index).  
\end{remark}

\noindent \textbf{Vertex degrees.}\ Since the tree $\T$ is $(q^{\delta}+1)$-valent and $\Gamma \backslash \T^s$ consists of stable vertices (that have trivial stabilizers), we find that all vertices in $\Gamma \backslash \T^s$ are $(q^{\delta}+1)$-valent. In particular, for the maximal vertex degree, we find 
\begin{equation} \label{twee} \Delta_{G} = q^\delta+1. \end{equation}
The boundary vertices $v$ in $\Gamma \backslash (\T^{ss}-\T^s)$ have valency $q^\delta$, since we have already shown that they are connected to a unique cusp.

\medskip

\noindent \textbf{The first eigenvalue of the Laplace operator.}\ 
We will relate the Laplace operator on the finite graph $G:=G_{\Gamma}$ to a Hecke operator on (quotients of) the Bruhat-Tits tree. 

The Hecke operator that we consider is $T_\infty$, associated to the characteristic function of the double coset $$\GL(2,\mathcal{O}_\infty) \left( \begin{smallmatrix} \pi_\infty & 0 \\ 0  & 1 \end{smallmatrix} \right) \GL(2,\mathcal{O}_\infty);$$ equivalently, it acts on the vertices of the Bruhat-Tits tree $\T$ of $\PGL(2,K_\infty)$ as the adjacency operator of $\T$; so $$T_\infty(f)(v) = \sum\limits_{w \colon \{v,w\} \in \E(\sT)} f(w)$$ for a function $f$ on $\V(\T)$. Now $T_\infty$ descends to a Hecke operator on $\Gamma\backslash \T$ by 
$$ T_\infty(f)(v) = \sum\limits_{w \colon \{v,w\} \in \E\Gamma \backslash \sT} \alpha_{vw} \cdot f(w)$$
where we set $\alpha_{vw}:= [\mbox{Stab}_{\Gamma}(v):\mbox{Stab}_{\Gamma}(e)]$ (compare also \cite{Lorscheid}). 
Now consider the adjacency operator $A_G$ of $G=G_\Gamma$, and suppose that $f$ is a function on the vertices of $G$ that is an eigenfunction for $A_G$ with eigenvalue $\lambda$. We claim that it extends uniquely to an eigenfunction $\tilde f$ of $T_\infty$ on the vertices of $\Gamma \backslash \T$. Indeed, on all non-boundary vertices of $G$, $T_\infty = A_G$. Then, if $v$ is a boundary vertex of $A_G$, and $w$ is the unique vertex outside $G$ that is adjacent to $v$, we want $$ \lambda f(v) = T_\infty f(v) = \alpha_{vw} \tilde f(w) + A_G f(v) = \alpha_{vw} \tilde f(w) + \lambda f(v). $$
Hence we should define $\tilde{f}(w):=0$. Finally, if $w_1,w_2,w_3$ are three consecutive vertices outside $G$, then by computing $T_\infty \tilde{f}(w_3)$, we see that we need to define
$$ \tilde{f}(w_3):= \frac{\lambda}{\alpha_{w_2 w_3}} \tilde f(w_2) - \frac{\alpha_{w_1w_2}}{\alpha_{w_2 w_3}} \tilde f(w_1). $$
Now $D=\Gamma \backslash \T$ with the weight function $\alpha_{vw}$ forms a \emph{diagram} in the sense of Morgenstern (\cite{Morgenstern2}, \cite{Morgenstern}), and $\tilde f$ is an eigenfunction for $T_\infty$ in the space $L_2^0(D)$, the orthocomplement of the constant functions in the space of square integrable functions on the vertices for the measure given by the weights $\alpha_{vw}$. As in Theorem 2.1 of \cite{Morgenstern}, the theory of Eisenstein series implies that the continuous spectrum of $T_\infty$ belongs to the segment $[-2 \sqrt{q^\delta}, 2 \sqrt{q^\delta}]$, and Drinfeld's proof of the Ramanujan-Petterson conjecture for function fields (in a series of papers culminating in \cite{Drinfeld}) shows that the discrete eigenvalues $\lambda$ of $T_\infty$ on $L_2^0(D)$ satisfy $|\lambda| \leq 2 \sqrt{q^\delta}$. 
We conclude that $|\lambda| \leq 2 \sqrt{q^\delta}$ holds for the eigenvalues of $A_G$ on $G$. 

The degree matrix of $G$ is given by a diagonal matrix 
$$D_G =  \left( \begin{array}{cc} (q^{\delta}+1)\cdot \mathbf{1} & 0 \\ 0 & q^\delta \cdot \textbf{1} \end{array} \right), $$
where the lower block corresponds to the boundary vertices (which are $q^\delta+1$-valent in $\Gamma \backslash \T$, but only $q^\delta$-valent in $G$, since the cusps are not present in $G$). 

Finally, the Laplacian of $G$ is 
$$ L_G =  L' + B \mbox{ with } L'= q^\delta\mathbf{1} - A_G  \mbox{ and }B=  \left( \begin{array}{cc} \mathbf{1} & 0 \\ 0 & \mathbf{0} \end{array} \right).$$
The Courant-Weyl inequalities (e.g., Theorem 2.1 in \cite{CvDoSa}) imply that 
$ \lambda_G$ is larger than the first eigenvalue of $L'$ (plus the smallest eigenvalue of $B$, which is zero), leading to 
\begin{equation} \label{drie} \lambda_G \geq q^\delta-2 \sqrt{q^\delta}. \end{equation} 

\medskip

\noindent \textbf{Conclusion of the proof of the main bound.}\ Since the function $$\lambda \mapsto \frac{\lambda}{\lambda+4(\Delta+1)}$$ is monotonously increasing in $\lambda$, we find the result by plugging the data from equations (\ref{een}), (\ref{twee}) and (\ref{drie}) in the lower bound from Theorem \ref{spectral}. 

\medskip

\noindent \textbf{Linear lower bound in the genus.}\ We now show how to convert the lower bound on the gonality of $X_\Gamma$ in terms of the index $[\Gamma(Y):\Gamma]$ into a lower bound that is linear in the genus, of the form $$ \mathrm{gon}_{\bar{K}}(X_\Gamma) \geq c'_{K,\delta} (g(X_\Gamma)-1), $$ for $c_K$ a constant depending only on the ground field $K$ and the degree $\delta$ of $\infty$. This is not entirely obvious in positive characteristic, due to wild ramification. 

First of all, it is again enough to establish such a bound for a principal congruence subgroup $\Gamma(Y,\fn)$. First, recall the Riemann-Hurwitz formula for a Galois cover $X \mapsto Y$ with Galois group $G$: 
\begin{equation} \label{RH} 2g_X-2 = |G| \left(2g_Y-2 + \sum_{y \in Y} \sum_{i=0}^\infty \frac{|G_i(y)|-1}{|G_0(y)|} \right), \end{equation}
where $G_i(y)$ are the higher ramification groups of any preimage of $y$ in $X$ (see e.g. \cite{Nakajima}). Applying this to the (Galois) cover (\ref{princov}) and using formula (\ref{prinineq}), it follows that
\begin{align*} \mathrm{gon}_{\bar{K}}(X_\Gamma) & \geq \frac{\mathrm{gon}_{\bar{K}}(X(Y,\fn))}{[\Gamma:\Gamma(Y,\fn)]}|\Gamma\cap Z| \\ & \geq \frac{c'_{K,\delta} (g(X(Y,\fn))-1)}{[\Gamma:\Gamma(Y,\fn)]} |\Gamma\cap Z|\\
& \geq c'_{K,\delta} (g(X_\Gamma)-1+r) \\ & \geq c'_{K,\delta} (g(X_\Gamma)-1), \end{align*}
where we have assumed that the desired bound holds for $\Gamma(Y,\fn)$, and $r \geq 0$ comes from formula (\ref{RH}) applied to the cover (\ref{princov}).  

We now establish the bound for $X(Y,\fn)$. If this curve has genus zero or one, the required bound for the gonality holds trivially. Therefore, we can assume $g(X(Y,\fn)) \geq 2$. The Riemann-Hurwitz formula for the cover $X(Y,\fn) \rightarrow X(Y)$ implies a relation of the form
$$ [\Gamma(Y):\Gamma(Y,\fn)] =  (g(X(Y,\fn))-1) \cdot \frac{2(q-1)}{2g(X(Y))-2 + R}, $$
where $R$ is the term in equation \ref{RH} applied to the Galois cover $X(Y,\fn) \rightarrow X(Y)$.  
Hence to prove our result, it suffices to prove a bound of the form 
$$  2g(X(Y))-2 + R \leq c''_{K,\delta}   $$
for some constant $c''_{K,\delta}$ depending only on $K$ and $\delta$. 

We recall some information about the ``ramification number'' $R$ and the genus $g(X(Y))$ from \cite{GekelerLN} (There, the formulae are worked out for the principal component $Y=A\oplus A$ only, but hold in general). First of all, the genus of $X(Y)$ depends only on $K$ and $\delta$. Secondly, ramification takes place above elliptic points and cusps of $X(Y)$. Let us write $R=R_e+R_c$ with $R_e$ the contribution from elliptic points, and $R_c$ the contribution from cusps. The ramification above elliptic points is tame; and the number of elliptic points depends only on $K$ and $\delta$. Hence $R_e$ is bounded above by a constant in $K$ and $\delta$. 

The ramification above the cusps is wild, but \emph{weak}; this means that the second ramification groups are trivial, and the first ramification group is just the $p$-Sylow group of the stabilizer of the cusp (this follows, for example, from the fact that $X(Y)$ are Mumford curves, hence ordinary---since their Jacobian admits a Tate uniformization, and hence has maximal $p$-rank---, by applying a result of Nakajima \cite{Nakajima}). In the end, we need an upper bound on 
$$ R_c = \frac{q^{d+1}-2}{(q-1)q^d} $$
where $d=\deg(\fn)\geq 1$, that is independent of $d$; for example, $$R_c \leq \frac{q}{q-1}$$ (the limit of $R_c$ as $d$ tends to $+\infty$) will do, and this finishes the proof. 
\end{proof}

\begin{remark}
In the ``standard'' case of a rational function field $K=\F_q(T)$ with a place $\infty$ of degree one, one can make all data explicit. The cover $X(\fn) \rightarrow X(1) \cong \PP^1$ is ramified tamely at the unique elliptic point, of order $q+1$, and at the unique cusp, of order $q^d(q-1)$, where $d=\deg(\fn)$. Hence the Riemann-Hurwitz formula becomes
 \begin{align*} 2 (g(X(\fn))-1) &= [\Gamma(1):\Gamma(\fn)] \left(1-\frac{1}{q+1} -\frac{1}{q^d(q-1)} - \frac{1}{q^d} \right) \\ &\leq [\Gamma(1):\Gamma(\fn)], \end{align*} and it follows that one can set $c'_{K,\delta}=2c_q$ in this case. 
\end{remark}

\begin{remark}
The previous best (non-linear) bounds were due to Schweizer (\cite{SchweizerForum}, Thm.\ 2.4), who showed that if $K$ is a rational function field, then one has, 
for example, $$ \mathrm{gon}_{\bar{\F_q(T)}} X_0(\fn) \geq\frac{1}{\sqrt{(q^2+1)(q+1)}} \cdot [\Gamma(1):\Gamma_0(\fn)]^{\frac{q-1}{2q}}. $$
\end{remark}

\section{Modular degree of elliptic curves over function fields: proof of Theorem \ref{ModDeg}}

\begin{se} Assume that $K$ is a global function field, $\infty$ a place of $K$, and let $E$ denote an elliptic curve over $K$ with split multiplicative reduction at $\infty$ (every non-isotrivial curve acquires such a place of reduction after a finite extension of the ground field $K$). From the work of Drinfeld, it follows that $E$ admits a \emph{modular parametrization} $$\phi \colon X_0(Y,\fn) \rightarrow E$$ (see Gekeler and Reversat \cite{GekelerReversat}) for some suitable modular curves $X_0(Y,\fn)$. This parametrization is defined over the maximal abelian extension $H$ of $K$ that is contained in the completion $K_\infty$. One may study the (minimal) degree of such a modular parametrization, called the \emph{modular degree}. 
\end{se} 

\begin{remark}
Contrary to the case of elliptic curves over $\Q$, in the case where $K=\F_q(T)$, Gekeler has proven that the modular degree always equals the congruence number of the associated automorphic form \cite{GekelerJTNB} \cite{Cojocaru}.  
\end{remark}

\begin{se} We first describe some of the structure of the modular curves $X_0(Y,\fn)$. 
The scheme $M_0(\fn)$, (coarsely) representing the moduli problem of rank-two Drinfeld modules with an $\fn$-isogeny, is defined over $K$, but is not absolutely irreducible if $\mathrm{Pic}(A)$ is non-trivial; it decomposes over $\C_\infty$ as 
 $$ M_0(\fn)(\C_\infty) = \bigsqcup_{Y \in \mathrm{Pic}(A)} \Gamma_0(Y,\fn) \backslash \Omega,$$
where the components are defined over $H$, and sharply transitively permuted by the Galois group $\mathrm{Gal}(H/K) \cong \mathrm{Pic}(A)$. 
One may also describe the modular parametrizations for different $Y$ simultaneously by a $K$-rational map $M_0(\fn) \rightarrow E$, with $M_0(\fn)$ not absolutely irreducible. 
\end{se}

\begin{se}
Since the elliptic curve $E$ admits a map of degree two to $\PP^1$, we find that 
$$ \mathrm{gon}_{\bar{K}}(X_0(Y,\fn)) \leq 2 \deg(\phi).$$
Since we now have a lower bound 
$$ \mathrm{gon}_{\bar{K}}(X_0(Y,\fn)) \geq c_{q,\delta} [\Gamma(Y):\Gamma_0(Y,\fn)], $$
we conclude that 
$$\label{Y01} \deg(\phi) \geq \frac{1}{2} c_{q,\delta} [\Gamma(Y):\Gamma_0(Y,\fn)]. $$
The desired result $\deg \phi \gg_{q,\delta} |\fn|_\infty$ follows from the following lemma. 
\end{se} 

\begin{lemma}
$ [\Gamma(Y):\Gamma_0(Y,\fn)] \geq |\fn|_\infty. $
\end{lemma}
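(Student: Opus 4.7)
The plan is to interpret $\Gamma_0(Y,\fn)$ as the stabilizer of a distinguished cyclic submodule in $Y/\fn Y$ and then apply orbit--stabilizer. By definition (analogous to the classical Hecke congruence subgroup), $\Gamma_0(Y,\fn) = \{\gamma \in \Gamma(Y) : \bar\gamma(L_0) = L_0\}$ for a fixed cyclic $A/\fn$-submodule $L_0 \subset Y/\fn Y$ isomorphic to $A/\fn$, where $\bar\gamma$ denotes the reduction of $\gamma$ modulo $\fn$. Since $Y/\fn Y \cong (A/\fn)^2$ as an $A$-module, the set $\mathcal{C}$ of cyclic $A/\fn$-submodules of full exponent is in natural bijection with $\PP^1(A/\fn)$. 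The group $\Gamma(Y)$ acts on $\mathcal{C}$ through its image in $\GL(Y/\fn Y) \cong \GL_2(A/\fn)$, and $\Gamma_0(Y,\fn)$ is precisely the stabilizer of $L_0$; orbit--stabilizer then gives $[\Gamma(Y):\Gamma_0(Y,\fn)] = |\Gamma(Y) \cdot L_0|$.

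The crux is to show this orbit exhausts $\mathcal{C}$, i.e.\ that $\Gamma(Y)$ acts transitively on $\PP^1(A/\fn)$. This follows from strong approximation for $\SL_2$ relative to the single place $\infty$ (Prasad), which says the reduction map $\SL(Y)\to\SL_2(A/\fn)$ is surjective. Since $\SL_2(A/\fn)$ acts transitively on $\PP^1(A/\fn)$---given a primitive vector $(a,b)\in(A/\fn)^2$, a prime-by-prime B\'ezout argument (via the CRT decomposition $A/\fn \cong \prod A/\p^{k_\p}$) produces $c,d \in A/\fn$ with $ad-bc=1$---it follows that $\Gamma(Y)$ does too.

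It then remains to count $|\PP^1(A/\fn)|$. By CRT this reduces to the prime-power case, where a direct count of primitive pairs modulo units gives $|\PP^1(A/\p^k)| = |\p|_\infty^{k-1}(|\p|_\infty+1)$, so that
\[
[\Gamma(Y):\Gamma_0(Y,\fn)] \;=\; |\PP^1(A/\fn)| \;=\; |\fn|_\infty \prod_{\p\mid\fn}\bigl(1+|\p|_\infty^{-1}\bigr) \;\geq\; |\fn|_\infty,
\]
as claimed.

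The main obstacle is the transitivity of the $\Gamma(Y)$-action on $\PP^1(A/\fn)$, which is the statement that the reduction mod $\fn$ hits all of $\SL_2(A/\fn)$; everything else is elementary counting. For the standard lattice $Y=A\oplus A$, strong approximation is classical, and for a general $Y$ it applies equally well since $\SL(Y)$ is an inner form that still satisfies strong approximation away from $\infty$.
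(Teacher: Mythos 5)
Your proof is correct, and it reaches the same intermediate identity as the paper, namely $[\Gamma(Y):\Gamma_0(Y,\fn)]=|\PP^1(A/\fn)|=|\fn|_\infty\prod_{\p\mid\fn}(1+|\p|_\infty^{-1})$, followed by the same trivial estimate. The difference lies in how the general lattice $Y$ is handled. The paper first reduces to the standard lattice $Y=A\oplus A$ by a geometric argument: the components $X_0(Y,\fn)$ of $M_0(\fn)$ and $X(Y)$ of $M(1)$ are permuted by $\mathrm{Gal}(H/K)\cong\mathrm{Pic}(A)$, so the covering degree, and hence the index (both groups containing the center), is independent of $Y$; it then cites ``a standard computation'' for the bijection $\GL(2,A)/\Gamma_0(\fn)\isomto\PP^1(A/\fn)$. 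You instead work with an arbitrary $Y$ directly, via orbit--stabilizer for the action on cyclic direct summands of $Y/\fn Y\cong(A/\fn)^2$, and you make explicit the one non-elementary input that the paper's ``standard computation'' also silently relies on for surjectivity, namely that $\SL(Y)\to\SL_2(A/\fn)$ is onto (strong approximation at the single place $\infty$). Your route buys a uniform treatment of all $Y$ without invoking the moduli-theoretic Galois conjugacy, at the cost of appealing to strong approximation; the paper's route keeps the group theory to the standard lattice but leans on the geometry of $M_0(\fn)$. One small terminological quibble: for $Y\cong A\oplus I$ the group $\SL(Y)$ is not a nontrivial inner form of $\SL_2$ — it is $\SL_2$ over $K$ equipped with a different integral structure — but strong approximation applies exactly as you say, so the conclusion stands.
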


\begin{proof}  
Since both groups $\Gamma(Y)$ and $\Gamma_0(Y,\fn)$ contain the center $Z$, this index is the degree of the covering $X_0(Y,\fn) \rightarrow X(Y). $ Although the different components $X_0(Y,\fn)$ of $M_0(\fn)$ and $X(Y)$ of $M(1)$ depend on $Y$, they are Galois conjugate by $\mathrm{Gal}(H/K) \cong \mathrm{Pic}(A)$. Therefore, the covering \emph{degree} of this cover does not depend on $Y$. Hence we can put $Y=A \oplus A$, and a standard computation then shows that there is a bijection
\begin{align*} \GL(2,A)/\Gamma_0(\fn) & \isomto \PP^1(A/\fn A) \\ \left( \begin{smallmatrix} a & b \\ c & d \end{smallmatrix} \right)  & \mapsto (a:c) \end{align*}
and hence 
$$[\Gamma(Y):\Gamma_0(Y,\fn)] = [\GL(2,A):\Gamma_0(\fn)] =   |\fn |_\infty \cdot \prod_{\p \mid \fn} (1+|\p|_\infty^{-1}) \geq |\fn |_\infty, $$
as was to be proven. 
\end{proof}

\begin{corollary}[= Theorem \ref{ModDeg}] \label{ModDeg2}
Let $E/K$ denote an elliptic curve with split multiplicative reduction at the place $\infty$, of conductor $\fn \cdot \infty$. Then the degree of a modular parametrization $\phi \colon X_0(Y,\fn) \rightarrow E$ is bounded below by 
$$\deg \phi \geq \frac{1}{2} c_{q,\delta} [\Gamma(Y):\Gamma_0(Y,\fn)] \geq \frac{1}{2} c_{q,\delta} |\fn|_\infty. $$
\end{corollary}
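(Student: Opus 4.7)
The plan is to chain three ingredients already in place: a universal degree-two cover $E \to \PP^1$, the gonality lower bound of Theorem \ref{DrinGon2}, and a direct coset computation for the Hecke congruence subgroup $\Gamma_0(Y,\fn)$.

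First, since $E$ is an elliptic curve it admits a non-constant morphism $\psi \colon E \to \PP^1_{\bar K}$ of degree two (for instance, the projection onto the $x$-coordinate in a Weierstrass model). Composing with $\phi$ gives a morphism $\psi \circ \phi \colon X_0(Y,\fn) \to \PP^1_{\bar K}$ of degree $2 \deg \phi$, hence
\[
\mathrm{gon}_{\bar K}(X_0(Y,\fn)) \leq 2 \deg \phi.
\]

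Next, I would apply Theorem \ref{DrinGon2} to the congruence subgroup $\Gamma_0(Y,\fn) \leq \Gamma(Y)$ to obtain
\[
\mathrm{gon}_{\bar K}(X_0(Y,\fn)) \geq c_{q,\delta} \cdot [\Gamma(Y):\Gamma_0(Y,\fn)].
\]
Combining the two inequalities yields the first claimed bound $\deg \phi \geq \tfrac{1}{2} c_{q,\delta} [\Gamma(Y):\Gamma_0(Y,\fn)]$.

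For the second inequality $[\Gamma(Y):\Gamma_0(Y,\fn)] \geq |\fn|_\infty$, I would exploit the fact that the components $X_0(Y,\fn)$ of $M_0(\fn)$ form a single $\mathrm{Gal}(H/K) \cong \mathrm{Pic}(A)$-orbit; in particular the covering degree $X_0(Y,\fn) \to X(Y)$ is independent of $Y$, so one may compute it at the standard lattice $Y = A \oplus A$. There the explicit bijection
\[
\GL(2,A)/\Gamma_0(\fn) \isomto \PP^1(A/\fn A), \qquad \begin{pmatrix} a & b \\ c & d \end{pmatrix} \mapsto (a : c),
\]
gives
\[
[\Gamma(Y):\Gamma_0(Y,\fn)] = |\fn|_\infty \cdot \prod_{\p \mid \fn}\bigl(1+|\p|_\infty^{-1}\bigr) \geq |\fn|_\infty.
\]

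No serious obstacle is anticipated, since the corollary is a formal consequence of Theorem \ref{DrinGon2}; the only mildly subtle point is that the $\bar K$-gonality is constant across the Galois orbit of components, which is immediate because $\bar K$ contains $H$ and geometric gonality is invariant under base extension to $\bar K$.
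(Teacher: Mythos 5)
Your proposal is correct and follows essentially the same route as the paper: the degree-two map $E\to\PP^1$ composed with $\phi$ bounds the gonality of $X_0(Y,\fn)$ above by $2\deg\phi$, Theorem \ref{DrinGon2} bounds it below by $c_{q,\delta}[\Gamma(Y):\Gamma_0(Y,\fn)]$, and the index is computed via the Galois-conjugacy of the components (reducing to $Y=A\oplus A$) together with the bijection $\GL(2,A)/\Gamma_0(\fn)\isomto\PP^1(A/\fn A)$. No gaps.
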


\begin{remark}
Previously, Papikian \cite{Papikian} had proven (using Spziro's conjecture for function fields and estimating symmetric square $L$-functions by the Ramanujan conjecture) that for $K=\F_q(T)$ a rational function field, 
$$ \deg_{ns}(j_E) \cdot \deg \phi \gg_{q,\varepsilon}  |\fn|_\infty^{1-\varepsilon},$$ where $j_E$ is the $j$-invariant of $E$ and $\deg_{ns}(j_E)$ is its inseparability degree. He had also proven that $\deg_{ns}(j_E)=1$ if $\fn$ is prime and the curve is \emph{optimal}, i.e., of minimal modular degree in its isogeny class -- a.k.a.\ a strong Weil curve (\cite{PapikianMA}, 1.3). Our lower bound $ \deg \phi \gg_{q,\delta} |\fn|_\infty$ confirms a conjecture that he made in \cite{Papikian}. 
\end{remark}

\begin{remark} \label{pal}
In the other direction, Papikian \cite{Papikiangeneral} has proven an upper bound on the modular degree of an optimal semistable elliptic curve $E$ with square-free conductor for a general function field, depending on the Manin constant $c_E$. Contrary to the case of elliptic curves over $\Q$, one really needs to assume that the curve is optimal, because of the existence of isogenies of arbitrary high degree, arising from the Frobenius operator. 
One should also note that in \cite{Papikiangeneral} the bound is given without the Manin constant as a factor, since it was at first conjectured to always equal one, but P\'al \cite{Pal} has given examples where this is not the case. Also, P\'al has proven a general upper bound for $c_E$ that combines with \cite{Papikiangeneral} to give an upper bound of the form 
$$  \deg(\phi) \ll_{K,\delta} |\fn|^2_\infty \left( \log_q |\fn|_\infty \right)^3   $$
for the degree of an optimal modular cover $\phi$ with square-free conductor $\fn$, and  such that the class number of $K$ is coprime to the characteristic $p$. In particular, the analogue of the \emph{degree conjecture} 
$$ \deg \phi \ll_{q,\varepsilon} |\fn|_\infty^{2+\varepsilon}$$
for any $\varepsilon>0$ holds in this case, and combines with our lower bound. 
\end{remark}

\begin{remark}
Papikian expects that the $j$-invariant of an optimal semi-stable elliptic curve over $K=\F_q(T)$ is separable, and then a lower bound of the form $ \deg \phi \gg c^2_E |\fn|_\infty^{1-\varepsilon}$ can  be shown to hold in many cases, where $c_E$ is the Manin-constant of $E$. The results of P\'al imply that $c_E$ can vary essentially from $1$ to $|\fn|^{1/2}_\infty$, and thus, the value of the Manin constant seems to influence how good our lower bound on the modular degree is. 
\end{remark}

\section{Rational points of higher degree on curves: proof of Theorem \ref{genth}}

We first quote the positive characteristic analogue of a theorem of Frey \cite{Frey}: 

\begin{proposition} \label{FreySchw} Let $X$ denote a curve over a global function field $K$, 
such that its Jacobian does not admit a $\bar{K}$-morphism to a curve defined over a finite field. If $d$ is an integer such that $2d+1 \leq \mathrm{gon}_{\bar{K}}(X)$, then 
the set of points of degree $d$ on $X$ is finite, i.e., 
$$  \left| \bigcup_{[K':K] \leq d} X(K') \right| \leq \infty.  \Box  $$
\end{proposition}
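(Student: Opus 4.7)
The plan is to follow the classical approach of Frey \cite{Frey} for curves over number fields, substituting Faltings' theorem by its positive-characteristic analogue, namely Hrushovski's function-field Mordell--Lang theorem, and to use the hypothesis on the Jacobian to neutralize the ``constant'' part of that theorem's conclusion.

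First I would reduce the claim to showing that for every $e$ with $1 \leq e \leq d$ the set $X^{(e)}(K)$ of $K$-rational points of the $e$-th symmetric power of $X$ is finite, since any closed point of $X$ of residue degree $e$ gives rise canonically to a point of $X^{(e)}(K)$. After a harmless finite extension of $K$ (which preserves the finiteness in question) to secure a base point $P_0 \in X(K)$, I would consider the Abel--Jacobi morphism $\alpha_e \colon X^{(e)} \to J$, $D \mapsto [D - e P_0]$, where $J = J(X)$ is the Jacobian of $X$. The gonality bound $\gon_{\bar K}(X) \geq 2d+1 > e$ forces every effective degree-$e$ divisor on $X_{\bar K}$ to satisfy $h^0 = 1$ (otherwise one would obtain a $g^1_e$, contradicting the gonality), so $\alpha_e$ is injective on $\bar K$-points and its image $W_e \subset J$ is a closed subvariety of dimension $e$.

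The geometric heart of the argument is the assertion that $W_e$ contains no translate of a positive-dimensional abelian subvariety of $J$. Indeed, suppose $c + B \subset W_e$ with $B \subset J$ an abelian subvariety of positive dimension, and let $D_b$ denote the unique effective degree-$e$ divisor with $\alpha_e(D_b) = c + b$. Since $B$ is a subgroup of $J$, whenever $b \in B$ we have $-b \in B$ as well, and $\alpha_{2e}(D_b + D_{-b}) = (c+b) + (c-b) = 2c$ is independent of $b$. Hence $\{D_b + D_{-b}\}_{b \in B}$ is a positive-dimensional family of pairwise linearly equivalent effective divisors of degree $2e$ on $X$, yielding a non-constant morphism $X \to \PP^1$ of degree at most $2e \leq 2d$, contradicting $\gon_{\bar K}(X) \geq 2d+1$.

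Finally, by the Lang--N\'eron theorem, $J(K)$ is finitely generated, and $W_e(K) = W_e \cap J(K)$. Applying Hrushovski's function-field Mordell--Lang theorem to the subvariety $W_e \subset J$, the set $W_e(K)$ is contained in a finite union of cosets $c_i + B_i(K)$ with $c_i + B_i \subset W_e$, where the $B_i$ are abelian subvarieties of $J$; the hypothesis that $J$ admits no $\bar K$-morphism to a curve defined over the field of constants of $K$ rules out the potential additional ``isotrivial'' contribution that is specific to positive characteristic. By the geometric step above, all such $B_i$ must then be zero-dimensional, so $W_e(K)$ is finite, as required. The main obstacle is executing the geometric step cleanly: one needs to confirm that $\{D_b + D_{-b}\}_{b \in B}$ is genuinely a non-constant family, which follows from the injectivity of $\alpha_e$ combined with the positivity of $\dim B$; a secondary point is to match the precise formulation of Hrushovski's theorem with the given non-isotriviality hypothesis on $J$ so as to exclude an isotrivial contribution.
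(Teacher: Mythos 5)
The paper does not actually prove this proposition: it is stated with a $\Box$ and, as the remark immediately following it explains, is imported from Schweizer \cite{SchweizerMZ} (Thm.~2.1) together with Clark's removal of the rational-point hypothesis \cite{Clark}. Your argument is precisely the Frey/Abramovich--Harris method \cite{Frey}, \cite{AH} that underlies those references, so in substance you are reconstructing the proof of the cited source rather than diverging from the paper. The individual steps you do carry out are correct: the reduction to finiteness of $X^{(e)}(K)$ for $e\le d$ and the harmless base extension to obtain $P_0$; the injectivity of $\alpha_e$ (no effective divisor of degree $e\le d<\gon_{\bar{K}}(X)$ moves, which also forces $e\le g$); the geometric lemma that $W_e$ contains no translate of a positive-dimensional abelian subvariety, via the linearly equivalent family $D_b+D_{-b}$, which is genuinely non-constant because a fixed effective divisor of degree $2e$ has only finitely many degree-$e$ subdivisors while the $D_b$ are pairwise distinct; and the Mordell--Weil/Lang--N\'eron input that $J(K)$ is finitely generated.

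The one place where the write-up is genuinely incomplete is the step you yourself defer as ``secondary.'' Hrushovski's theorem does not conclude that $W_e(K)$ lies in finitely many cosets $c_i+B_i$ with $c_i+B_i\subset W_e$ --- that is Faltings' characteristic-zero conclusion. In characteristic $p$, a positive-dimensional component $Z$ of the Zariski closure of $W_e\cap J(K)$ may instead be a translate of $h^{-1}(X_0)$, where $h$ is a homomorphism from an abelian subvariety $B\subseteq J$ onto an abelian variety $S_0$ defined over $\bar{\F}_p$ and $X_0\subseteq S_0$ is a positive-dimensional $\bar{\F}_p$-subvariety. Your geometric lemma forces $\ker h$ to be finite, so what remains is a positive-dimensional $B\subseteq J$ isogenous to an isotrivial abelian variety, and one must still derive a contradiction with the stated hypothesis on $\mathrm{Jac}(X)$. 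This is exactly where the non-isotriviality hypothesis is consumed, and it deserves to be carried out rather than flagged; note also that, read literally, ``no $\bar{K}$-morphism to a curve defined over a finite field'' is too strong to be satisfiable (every abelian variety admits a non-constant morphism to $\PP^1$), so the hypothesis has to be interpreted, as in \cite{SchweizerMZ}, as excluding non-trivial isotrivial abelian subvarieties (equivalently, quotients) of the Jacobian --- under that reading your argument closes.
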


\begin{remark} The result was proven in \cite{SchweizerMZ} (Theorem 2.1) under the assumption that $X$ has a $K$-rational point (similar to a hypothesis of Frey), but Clark has shown that this hypothesis is unnecessary, cf.\ \cite{Clark}, Theorem 5. 
\end{remark}

\begin{remark}
This result has now been improved into a quantitative statement over more general fields by Cadoret and Tamagawa \cite{Cadoret}, as follows: recall that gonality may alternatively be defined as the minimal $d$ for which there exists a non-constant morphism from a $\PP^1$ to the $d$-th symmetric power $X^{(d)}$ of the curve $X$. Define the \emph{isogonality} $\mathrm{isogon}_K(X)$ of $X$ as the minimal $d$ for which there exists a non-constant morphism from a $K$-isotrivial curve to the $d$-th symmetric power $X^{(d)}$ of the curve $X$. Then the result from \cite{Cadoret} says: \emph{for any finitely generated field $K$ of positive characteristic $p>0$, and any smooth geometrically integral curve $X$ over $K$, if $d$ is a natural number with $2d+1 \leq \mathrm{gon}_{\bar{K}}(X)$ and $d+1 \leq \mathrm{isogon}_{\bar{K}}(X)$, then the set of points of degree $\leq d$ on $X$ is finite.}
\end{remark}

\begin{remark} We will typically apply our bound in the following situation: let $K$ be a finitely generated field, and $k$ the fraction field of an excellent discrete valuation ring, with $K \subseteq k$; for example, $K$ is a global function field and $k=K_\infty$ is the completion of $K$ at a place $\infty$; then $\mathrm{gon}_{\bar{K}}(X)=\mathrm{gon}_{\bar{k}}(X)$, so the lower bound that we obtained for $\mathrm{gon}_{\bar{k}}(X)$ from the stable gonality of its reduction graph applies equally well to $\mathrm{gon}_{\bar{K}}(X)$. 
\end{remark}

\begin{remark} If $X/K$ is a Mumford curve over a valued field $k \supseteq K$, then its Jacobian has split reduction, and hence it admits no map to an isotrivial curve, and $\mathrm{isogon}_{\bar{K}}(X)=\gon_{\bar{K}}(X).$ 
\end{remark}

Proposition \ref{FreySchw} and our spectral bound on gonality now immediately imply the following general finiteness result for points on curves whose degree is bounded in terms of spectral data associated to a special fiber: 

\begin{theorem}[=Theorem \ref{genth}] \label{genth2} Let $X$ denote a curve over a global function field $K$,
such that its Jacobian does not admit a $\bar{K}$-morphism to a curve defined over a finite field. Let $K_\infty$ denote the completion of $K$ at a place $\infty$, and let $G$ denote the stable reduction graph of $X/K_\infty$. Let $\Delta$ denote the maximal vertex degree of $G$ and $\lambda$ the smallest non-zero eigenvalue of the Laplacian of $G$. Then the set 
$$  \bigcup_{[K':K] \leq \frac{\lambda(|G|-1)-4\Delta-4}{2\lambda+8\Delta+8}} X(K') $$
of rational points on $X$ of degree at most $\frac{\lambda(|G|-1)-4\Delta-4}{2\lambda+8\Delta+8}$ is finite. $\Box$
\end{theorem}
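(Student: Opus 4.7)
The plan is to combine the finiteness criterion of Proposition \ref{FreySchw} with the spectral lower bound on stable gonality (Corollary \ref{deafschatting}) and the comparison theorem relating gonality of the curve to stable gonality of its reduction graph (Corollary \ref{cor-31}). In other words, the proof will be a short deduction: obtain a lower bound on $\mathrm{gon}_{\bar{K}}(X)$ purely from graph-theoretic data $(\lambda, \Delta, |G|)$, then read off the range of degrees $d$ in which the Frey-type finiteness statement applies.

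More precisely, I would first observe that since $K\subseteq K_\infty$ and both contain enough of an algebraic closure, we have $\mathrm{gon}_{\bar{K}}(X) = \mathrm{gon}_{\bar{K_\infty}}(X)$, as remarked in the paragraph following Proposition \ref{FreySchw}. Then Corollary \ref{cor-31} applied over $k=K_\infty$ gives
\[
\mathrm{gon}_{\bar K}(X) \;\geq\; \mathrm{sgon}(G),
\]
since $G$ is by hypothesis the stable reduction graph (i.e., the intersection dual graph $\Delta(\mathscr{X}_0)$ of the stable model of $X_{K_\infty}$). Next, Corollary \ref{deafschatting} gives
\[
\mathrm{sgon}(G) \;\geq\; \left\lceil \frac{\lambda\, |G|}{\lambda+4(\Delta+1)} \right\rceil.
\]
Chaining these two inequalities produces a purely combinatorial lower bound on $\mathrm{gon}_{\bar K}(X)$.

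Now I would apply Proposition \ref{FreySchw}, whose hypothesis (that the Jacobian of $X$ admits no $\bar K$-morphism to a curve over a finite field) is exactly the assumption in the theorem. The proposition gives finiteness of the set of rational points of degree $\leq d$ as soon as $2d+1 \leq \mathrm{gon}_{\bar K}(X)$. It therefore suffices that
\[
2d+1 \;\leq\; \frac{\lambda\, |G|}{\lambda+4(\Delta+1)},
\]
and solving this inequality for $d$ yields
\[
d \;\leq\; \frac{\lambda(|G|-1)-4\Delta-4}{2\lambda+8\Delta+8},
\]
which is the bound in the statement.

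There is essentially no obstacle, since all the heavy lifting has already been done in Corollary \ref{cor-31}, Corollary \ref{deafschatting}, and Proposition \ref{FreySchw}; the only subtle point worth spelling out is the identification of the base fields, namely that the stable reduction graph is computed over $K_\infty$ while the finiteness statement is about $K$-rational points of bounded degree, which is justified by $\mathrm{gon}_{\bar K}(X) = \mathrm{gon}_{\bar{K_\infty}}(X)$. One should also note that if the right-hand side of the displayed bound is negative or zero, the statement is vacuous and there is nothing to prove, so we may assume the bound is positive, in which case the ceiling in Corollary \ref{deafschatting} can harmlessly be dropped.
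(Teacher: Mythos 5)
Your proposal is correct and is exactly the paper's argument: the theorem is stated there as an immediate consequence of Proposition \ref{FreySchw} combined with Corollary \ref{cor-31} and the spectral bound of Corollary \ref{deafschatting}, with the same identification $\mathrm{gon}_{\bar K}(X)=\mathrm{gon}_{\bar{K_\infty}}(X)$ handled in the preceding remark. The algebra solving $2d+1\leq \lambda|G|/(\lambda+4\Delta+4)$ for $d$ checks out.
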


\begin{example}
Consider the curve $X_{\lambda,r}$ from Example \ref{ASM}, with $\lambda=\lambda(T) \in K:=\F_p(T)$ of negative degree in $T$. Observe that $X_{\lambda,r}$ is a Mumford curve over $k=\F_p((T^{-1}))$. Our gonality bound from Example \ref{ASM} implies for example that if $p^r>14$, then the set of points on $X_{\lambda,r}$ of degree $\leq p^r/6$ is finite. Note that the set of points of degree $p^r+1$ is infinite, so the result is best up to a constant (for varying $p$ and $r$). 
 
\end{example}

\section{Rational points of higher degree on Drinfeld modular curves: proof of Theorem \ref{Tors}}

We now further specialise the results to the case of Drinfeld modular curves: 

\begin{theorem}[= Theorem \ref{Tors}] \label{Tors2} If  $X_\Gamma$ is defined over a finite extension $K_\Gamma$ of $K$, then the set 
$$  \bigcup_{[L:K_\Gamma] \leq  \frac{1}{2} \left( c_{q,\delta} \cdot [\Gamma(1):\Gamma]-1\right)} X_\Gamma(L) $$
is finite. 
\end{theorem}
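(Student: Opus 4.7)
The plan is to combine directly the spectral lower bound on the gonality of Drinfeld modular curves proved in Theorem \ref{DrinGon2} with the general Frey-type finiteness statement given as Proposition \ref{FreySchw}. Since $K_\Gamma/K$ is finite, $K_\Gamma$ is itself a global function field over the same constant field, so Proposition \ref{FreySchw} applies verbatim with $K_\Gamma$ as the base.

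First, I would verify the hypothesis of Proposition \ref{FreySchw}, namely that the Jacobian $J_\Gamma$ of $X_\Gamma$ admits no $\overline{K_\Gamma}$-morphism to a curve defined over a finite field. As recalled in the proof of Theorem \ref{DrinGon2}, the curve $X_\Gamma$ is a Mumford curve over the completion $K_\infty$: its stable reduction at $\infty$ consists of rational components meeting transversally at rational nodes. Consequently $J_\Gamma$ has purely toric (in fact split multiplicative) reduction at any place of $K_\Gamma$ lying over $\infty$. On the other hand, the Jacobian of any curve over a finite field has good (abelian) reduction everywhere. Since any homomorphism of group schemes from a torus to an abelian variety over a discretely valued base is constant, the reduction of any such morphism would be constant; spreading out and using rigidity, the original morphism is constant as well.

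Next I would transfer the gonality estimate of Theorem \ref{DrinGon2} to the new base field. Since $\overline{K_\Gamma} = \bar K$ and gonality is computed over the algebraic closure, we have
\[
\mathrm{gon}_{\overline{K_\Gamma}}(X_\Gamma) = \mathrm{gon}_{\bar K}(X_\Gamma) \geq c_{q,\delta}\cdot[\Gamma(1):\Gamma],
\]
under the convention (implicit in Theorem \ref{DrinGon}) that $\Gamma(1)=\Gamma(Y)$ denotes the ambient group in which $\Gamma$ lives. Setting $d = \lfloor \tfrac{1}{2}(c_{q,\delta}\cdot[\Gamma(1):\Gamma] - 1) \rfloor$, one immediately checks that
\[
2d + 1 \leq c_{q,\delta}\cdot[\Gamma(1):\Gamma] \leq \mathrm{gon}_{\overline{K_\Gamma}}(X_\Gamma).
\]
Proposition \ref{FreySchw} then gives the finiteness of the set of closed points of $X_\Gamma$ of degree $\leq d$ over $K_\Gamma$, which is exactly the statement of Theorem \ref{Tors2}.

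I expect no serious obstacle here: once the two ingredients (Theorem \ref{DrinGon2} and Proposition \ref{FreySchw}) are in place, the deduction is purely bookkeeping. The one point that must be justified carefully, and which I would emphasize in the write-up, is the verification that $J_\Gamma$ admits no $\bar K$-map to a curve over a finite field; this is where the Mumford-curve structure of $X_\Gamma$ at $\infty$, already exploited in the proof of Theorem \ref{DrinGon2}, is decisive.
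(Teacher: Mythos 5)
Your proposal is correct and follows essentially the same route as the paper: the authors likewise invoke the Mumford-curve structure of $X_\Gamma$ at $\infty$ (split toric reduction of the Jacobian, hence no map to an isotrivial curve) to verify the hypothesis of Proposition \ref{FreySchw}, and then combine that proposition with the gonality bound of Theorem \ref{DrinGon2}. Your write-up merely makes explicit the Jacobian verification that the paper compresses into one sentence.
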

\begin{proof}
The curves $X_\Gamma$ are Mumford curves for the $\infty$-valuation. Therefore, the conditions to apply Proposition \ref{FreySchw} are satisfied by $X=X_\Gamma$ and $K=K_\Gamma$. 
\end{proof}

\begin{remark}
Since $\Gamma$ is a congruence group, the curve $X_\Gamma$ is covered by some $X(Y,\fn)$, and hence the curve $X_\Gamma$ is defined over $H$. Hence one may always choose $K_\Gamma=H$, but $K_\Gamma$ might be chosen smaller. Also, Drinfeld modular curves always have $H$-rational points, namely, the cusps, so the refinement of result \ref{FreySchw} by Clark is not necessary for this application. 
\end{remark}

We also deduce the following analogue of a result of Kamieny and Mazur \cite{KM}: 

\begin{corollary}[= Theorem \ref{Freyan}] \label{Freyan2}
If $\p$ is a prime ideal in $A$, then the set of all rank two Drinfeld $A$-modules defined over some field extension $L$ of $K$ that satisfies the degree bound $$[LH:H] \leq \frac{1}{2} c_{q,\delta} \cdot |\p|_\infty$$ that admit an $L$-rational $\p$-isogeny is finite.  
\end{corollary}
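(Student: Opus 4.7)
The plan is to follow the Abramovich--Harris--Frey template in this function-field setting: translate the existence of a rank-two Drinfeld $A$-module over some field $L$ with an $L$-rational $\p$-isogeny into an $L$-rational point on a Drinfeld modular curve, and then invoke Theorem~\ref{Tors2} to extract finiteness from the lower bound on gonality.

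First, I would use the moduli interpretation. The coarse moduli scheme $M_0(\p)$ parametrizing pairs $(\phi, C)$ with $\phi$ a rank-two Drinfeld $A$-module and $C$ a cyclic $\p$-subgroup decomposes, after base change to $H$, as a disjoint union
\[ M_0(\p) \otimes_K H = \bigsqcup_{Y \in \mathrm{Pic}(A)} X_0(Y,\p)^{\circ}, \]
where the components $X_0(Y,\p)^{\circ}$ are the non-cuspidal loci of the Drinfeld modular curves $X_0(Y,\p) = \Gamma_0(Y,\p) \backslash \Omega$, and are permuted transitively by $\mathrm{Gal}(H/K)$. Any Drinfeld module $\phi$ defined over $L$ together with an $L$-rational $\p$-isogeny yields an $LH$-rational point on exactly one of these components. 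Since $\mathrm{Pic}(A)$ is finite, it suffices to prove the finiteness of $L'$-rational points on each individual $X_0(Y,\p)$ as $L'$ ranges over extensions of $H$ with $[L':H]$ bounded by $\tfrac{1}{2} c_{q,\delta} |\p|_\infty$.

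Next, I would apply Theorem~\ref{Tors2} with $\Gamma = \Gamma_0(Y,\p)$ and $K_\Gamma = H$. The hypothesis on the Jacobian, which enters via Proposition~\ref{FreySchw} inside the proof of Theorem~\ref{Tors2}, holds because $X_0(Y,\p)$ is a Mumford curve at $\infty$ (by Gekeler--Reversat), so its Jacobian has totally split multiplicative reduction and hence admits no nonzero isotrivial isogeny factor. Using the Lemma in \S9, the index $[\Gamma(Y):\Gamma_0(Y,\p)] = |\p|_\infty + 1$ when $\p$ is prime, so Theorem~\ref{Tors2} provides the finiteness of such $L'$-points once $[L':H] \leq \tfrac{1}{2}\bigl(c_{q,\delta}(|\p|_\infty+1)-1\bigr)$. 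A short arithmetic comparison shows that this range absorbs the Corollary's bound $[LH:H] \leq \tfrac{1}{2}c_{q,\delta}|\p|_\infty$ up to a bounded additive constant, which at worst excludes only finitely many primes $\p$ of small norm — and these can be handled separately.

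Finally, I would translate finiteness of points back to finiteness of Drinfeld modules. Because $M_0(\p)$ is only a coarse moduli scheme, a single $L$-rational point may be realized by several non-isomorphic Drinfeld modules related by twists; however, the automorphism scheme of a rank-two Drinfeld module is finite \'etale, so each point carries only finitely many $L$-twists, and summing over the (finite) set of points over $LH$ gives the claimed finiteness. \textbf{The main obstacle} I anticipate is precisely this descent-theoretic passage from $LH$-rational points on a coarse moduli space to isomorphism classes of Drinfeld modules over $L$ itself (rather than over $LH$), which requires a careful twist/Galois-cohomology argument, coupled with the bookkeeping needed to reconcile the clean constant $\tfrac{1}{2} c_{q,\delta} |\p|_\infty$ of the Corollary with the less-clean bound supplied directly by Theorem~\ref{Tors2}.
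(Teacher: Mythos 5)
Your proposal follows the paper's proof essentially verbatim: a rank-two Drinfeld module over $L$ with an $L$-rational $\p$-isogeny gives an $LH$-point on one of the components $X_0(Y,\p)$ of $M_0(\p)$, Theorem \ref{Tors2} (applied with $K_\Gamma=H$, the Mumford-curve property supplying the hypothesis on the Jacobian) gives finiteness of such points, and the index computation $[\Gamma(Y):\Gamma_0(Y,\p)]=|\p|_\infty+1$ converts the bound. Two small caveats: your arithmetic comparison is stated backwards --- since $c_{q,\delta}<1$, the range $[LH:H]\leq \frac{1}{2}\bigl(c_{q,\delta}(|\p|_\infty+1)-1\bigr)$ supplied by Theorem \ref{Tors2} sits slightly \emph{below} the Corollary's $\frac{1}{2}c_{q,\delta}|\p|_\infty$ (by less than $1/2$, a rounding slippage the paper itself silently absorbs, and not one repaired by discarding small primes); and your claim that each coarse point carries only finitely many $L$-twists is false as stated (the twists are classified by $H^1(L,\F_q^*)=L^*/(L^*)^{q-1}$, typically infinite), so the finiteness assertion must be read up to $\bar{K}$-isomorphism, as the paper implicitly does.
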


\begin{proof} Recall that the scheme $M_0(\p)$, coarsely representing this moduli problem, decomposes over $\C_\infty$ as 
 $$ M_0(\p)(\C_\infty) = \bigsqcup_{Y \in \mathrm{Pic}(A)} \Gamma_0(Y,\p) \backslash \Omega,$$
where the components are defined over $H$, and all components have $H$-rational points, namely, the cusps. 

Now a rank-two $A$-Drinfeld module $\phi$ over a field $L$ with an $L$-rational $\p$-isogeny gives rise to an $L$-rational point of $M_0(\p)$, and hence to an $HL$-rational point  $[\phi] \in X_0(Y,\p)(HL)$ for some $Y$. Now the above theorem implies that 
$$ \bigcup_{[HL:H] \leq \frac{1}{2}(\mathrm{gon}_{\bar{H}}(X_0(Y,\p)-1)} X_0(Y,\p)(HL) $$
is finite. Now since by Theorem \ref{DrinGon}, 
$$ \mathrm{gon}_{\bar{H}}(X_0(Y,\p)) \geq c_{q,\delta} [\Gamma(Y):\Gamma_0(Y,\p)], $$
and we have $$[\Gamma(Y):\Gamma_0(Y,\p)]=|\p|_\infty+1, $$
the result follows. 
\end{proof} 

\begin{corollary}[= Corollary  \ref{KM}] \label{KM2}
Fix a prime $\p$ of $A$. There is a uniform bound on the size of the $\p$-primary torsion of any rank two $A$-Drinfeld module over $L$, where $L$ ranges over all extensions for which the degree $[LH:H]$ is bounded by a given constant. 
\end{corollary}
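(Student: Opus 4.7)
The plan is to mimic the Kamienny--Mazur argument in characteristic $p$, using Corollary \ref{Freyan2} (and its natural extension to $\p^k$-isogenies) as the finiteness engine. Fix the constant $c$ bounding $[LH:H]$, and suppose $\phi$ is a rank-two $A$-Drinfeld module over $L$ with $\phi(L)$ containing a point $P$ of order $\p^n$. For $i=0,1,\dots,n$ form the quotient Drinfeld modules
\[ \phi_i := \phi/\langle \p^{n-i} P\rangle, \]
all defined over $L$ because $\langle \p^{n-i}P\rangle$ is pointwise $L$-rational, and observe that the natural maps $\phi_i \to \phi_{i+1}$ (for $i<n$) are $L$-rational $\p$-isogenies, while $\phi_n \to \phi_{n-1}$ is its $L$-rational dual. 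So each $\phi_i$ is a rank-two Drinfeld module over $L$ admitting an $L$-rational $\p$-isogeny.

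To apply Corollary \ref{Freyan2}, if $c \leq \tfrac{1}{2} c_{q,\delta}|\p|_\infty$ the set of such $\phi_i$ is directly finite, of some cardinality $M=M(c,\p)$. For larger $c$, I would choose $k$ with $c \leq \tfrac{1}{2}c_{q,\delta}\, [\Gamma(Y):\Gamma_0(Y,\p^k)]$ (possible by Theorem \ref{DrinGon}, since the right-hand side grows as $|\p|^k_\infty$), and run the same argument with $X_0(Y,\p^k)$ in place of $X_0(Y,\p)$, extracting the $\p^k$-isogenies $\phi_{ki}\to \phi_{ki+k}$ from the chain; this again produces a finite set $\mathcal{F}$ of Drinfeld modules in which all the relevant $\phi_i$ must lie. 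In either situation, whenever $n$ exceeds $|\mathcal{F}|$, pigeonhole gives $\phi_i \cong_L \phi_j$ for some $i<j$, and composing the intermediate isogenies produces a non-scalar endomorphism of $\phi_i$ of degree $|\p|^{j-i}_\infty$. Hence $\mathrm{End}_L(\phi_i) \supsetneq A$, so $\phi_i$ has complex multiplication; since CM is an isogeny invariant, $\phi$ itself has CM.

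It then remains to bound the $\p$-primary $L$-rational torsion for CM rank-two Drinfeld modules with $[LH:H] \leq c$. By Hayes' theory, such a $\phi$ has CM by an $A$-order $\mathcal{O}$ in an imaginary quadratic extension $K'/K$, is defined over the ring class field of $\mathcal{O}$, and its Tate module $T_\p(\phi)$ is free of rank one over $\mathcal{O}_\p$. The degree bound on $LH/H$ constrains the class number of $\mathcal{O}$, hence the conductor, so only finitely many orders $\mathcal{O}$ and finitely many isomorphism classes of such $\phi$ can arise. For each, the Galois representation on $T_\p(\phi)$ factors (up to an index-two extension) through a character into $\mathcal{O}_\p^\times$ whose image is open by function-field class field theory, and whose index is controlled by $c$; the $\mathrm{Gal}(\bar L/L)$-fixed part of $T_\p/\p^n T_\p$ is therefore bounded independently of $L$ and $n$. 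Combining with the non-CM case (where $n \leq |\mathcal{F}|$), we obtain the desired uniform bound $N=N(c,\p)$.

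The main obstacle is the third step: the uniform bound on $L$-rational $\p$-primary torsion for CM Drinfeld modules, which requires invoking the appropriate open-image theorem for CM Galois representations in the function-field setting. The pigeonhole reduction to the CM case is essentially formal once Corollary \ref{Freyan2} is in hand; by contrast, the CM torsion bound is where arithmetic input (class field theory for $K'$, growth of class numbers of $A$-orders) genuinely enters, and where one must be careful that the bound is uniform in the (finitely many) $L$ of degree at most $c$ over $H$, and not just finite for each individual $L$.
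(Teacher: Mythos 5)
Your first half coincides with the paper's: both arguments choose $e$ (your $k$) so large that $\frac{1}{2}c_{q,\delta}[\Gamma(Y):\Gamma_0(Y,\p^e)]$ exceeds the given degree bound, and then invoke the gonality estimate together with the Frey-type finiteness (Theorem \ref{Tors2}) to conclude that $M_0(\p^e)$ has only finitely many points of the relevant degree; a point of large $\p$-power order produces such points via its cyclic subquotients, exactly as you describe. Where you genuinely diverge is in how the finite list is exploited. The paper simply applies Breuer's theorem \cite{Breuer} (resting on the adelic open-image result of Pink--R\"utsche \cite{Pink}) to each of the finitely many Drinfeld modules on the list, obtaining a torsion bound $C_\phi\,[LH:H]$ for each and maximizing over the list; modules not on the list have $\p$-primary torsion trivially bounded by $|\p|_\infty^{2(e-1)}$. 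You instead run the classical Kamienny--Mazur/Frey pigeonhole on the isogeny chain $\phi_0\to\cdots\to\phi_n$ to force a cyclic non-scalar endomorphism, hence CM, and then propose a separate CM torsion bound via Hayes' theory and class field theory of the CM order. Your reduction to CM is sound (the kernel of the composite endomorphism is cyclic, so it cannot be multiplication by an element of $A$, and in generic characteristic the endomorphism ring is then an order in an imaginary quadratic extension), but the CM step you flag as the main obstacle is precisely the arithmetic input the paper outsources: Breuer's bound already covers every module on the finite list, CM or not, so no dichotomy is needed. If you want to complete your route you must supply (i) the growth of class numbers of $A$-orders, to show only finitely many CM orders are compatible with $[LH:H]\leq c$ (note $[L:K]\leq c\,[H:K]$, so this does bound the field of moduli), and (ii) an open-image statement with index controlled by $[LH:H]$ for the CM character, neither of which is in the paper's toolkit; citing Breuer instead collapses your steps two and three into one. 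Two small points of hygiene: the moduli spaces are coarse, so pigeonhole only yields $\phi_i\cong\phi_j$ over $\bar K$, not over $L$ (this suffices for CM), and the pigeonhole threshold should be stated for the subchain $\phi_0,\phi_k,\phi_{2k},\dots$, i.e., for $\lfloor n/k\rfloor$ rather than $n$.
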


\begin{proof}
The method of proof is similar to the one in Kamieny-Mazur \cite{KM}, as used in \cite{SchweizerMZ}, Thm.\ 2.4: the moduli space $M_0(\p^e)$ has only finitely many $LH$-points as soon as $$e\geq \log_q(2[LH:H]/c_{q,\delta})/\log_q(|\p|_\infty).$$ For each of the finitely many Drinfeld modules $\phi$ over $LH$ corresponding to these points, Breuer \cite{Breuer} has shown that the open adelic image result of Pink and R\"utsche \cite{Pink} implies that the $\p$-primary torsion $\phi[\p^\infty]$ of $\phi$ is bounded by $C[LH:H]$, where $C$ depends on $\phi$, $K$ and $\p$. One may now maximize the bound as $\phi$ runs through these finitely many Drinfeld modules. Also, for any Drinfeld module $\phi$, $$|\phi[\p^{e-1}] | \leq |\p|_\infty^{2(e-1)}.$$ The result follows. 
\end{proof}

\begin{remark}
In general, $[LH:H]$ is bounded from above by $[L:K]$ (with equality if $L$ and $H$ are linearly disjoint). This shows that a bound of the form $[L:K]\leq d$ implies a bound of the form $[LH:H] \leq d$. Hence the uniform boundedness conjecture for rank-two $A$-Drinfeld modules over $K$  \cite{Poonen} follows from the following statement: for fixed $d$,  there are only finitely many $\p$ such that there exists an $L$-rational $\p$-torsion point on an $A$-Drinfeld module over $L$ with $[L:K] \leq d$.
\end{remark}

\appendix
\section{Other notions of gonality from the literature} \label{app}

In this appendix, we describe various other notions of graph gonality from the literature, and discuss the relation of stable gonality to these alternatives. 

\begin{se} We first recall the notion of graph gonality from Caporaso \cite{Caporaso}, but we change the terminology to be compatible with \cite{Amini} and the current paper. For the convenience of the reader, we include a dictionary between the terminology in \cite{Caporaso} and this paper in Table \ref{dict}.  

A \emph{morphism} between two loopless graphs $G$ and $G'$ (denoted by $\f:G\to G'$) is a map $$\varphi \colon \V(G)\cup\E(G)\to\V(G')\cup\E(G')$$ such that  $\varphi(\V(G))\subset \V(G')$, and for every edge $e\in\E(x,y)$, either $\varphi(e)\in\E(\varphi(x),\varphi(y))$ or $\f(e)\in\V(G')$ and $\f(x)=\f(y)=\f(e)$; together with, for every $e\in\E(G)$, a non-negative integer $r_\f(e)$, the \emph{index} of $\f$ at $e$, such that $r_\f(e)=0$ if and only if $\f(e)\in\V(G')$. 

Previously, in Definition \ref{defmor}, we only considered \emph{finite} morphisms, which are morphisms that map edges to edges.  
The notions of harmonicity and degree that we introduced in Definition \ref{defmor} make sense for morphisms, even if they are not finite. A harmonic morphism is called \emph{non-degenerate} if $m_\f(v)\geq 1$ for every $v\in\V(G)$ (this is automatic if it is finite). 
\begin{table}[h] \caption{Small dictionary of terminology} \label{dict}
 \begin{tabular}{ll}
Terminology in \cite{Caporaso} & Terminology in this paper\\
\hline
indexed morphism & morphism\\
homomorphism &finite morphism\\
stable refinement & refinement\\
pseudo-harmonic & harmonic\\
\hline
 \end{tabular}
\end{table}
\end{se}

\begin{se} The \emph{gonality} of a graph is defined to be 
\[\gon(G)=\min\{\deg\f|\f \text{ a non-degenerate harmonic morphism from $G$ to a tree } T\}.\] 
Caporaso proves that \emph{the gonality of a complex nodal curve is bounded below by the gonality of any refinement of its intersection dual graph.} 
\end{se}

\begin{lemma}
The stable gonality of a graph $G$ is equal to the minimum of the gonalities of all its refinements: $$\sgon(G) = \min \{ \gon(G') | G' \text{ is a refinement of } G\}.$$
\end{lemma}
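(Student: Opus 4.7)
The plan is to establish the stated equality by proving both inequalities. The direction $\min_{G'} \gon(G') \leq \sgon(G)$ is immediate: any finite harmonic morphism realizing $\sgon(G)$, say from a refinement $G''$ of $G$ to a tree, is automatically non-degenerate, hence admissible in the definition of $\gon(G'')$, and so contributes to the minimum on the left-hand side.

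For the reverse inequality, my plan is to start from an arbitrary refinement $G'$ of $G$ together with a non-degenerate harmonic morphism $\varphi\colon G' \to T$ of degree $\gon(G')$, and to produce a further refinement $G''$ of $G'$ (hence of $G$), a tree $T^{\#}$ obtained from $T$ by attaching leaves, and a \emph{finite} harmonic morphism $\varphi''\colon G'' \to T^{\#}$ of the same degree $\gon(G')$. The only obstruction to $\varphi$ being finite is the presence of edges of $G'$ that $\varphi$ collapses to a single vertex of $T$, so the task is to ``open up'' all of these collapsed edges in a way compatible with harmonicity and without increasing the degree.

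Concretely, for each collapsed edge $e \in \E(G')$ with endpoints $x,y$ and with $\varphi(e) = \varphi(x) = \varphi(y) = v \in \V(T)$ I would perform three simultaneous operations: first, attach a new leaf $v^{*}_{e}$ to $T$ at $v$ via a new edge $f_{e}$; second, subdivide $e$ in $G''$ by a midpoint $w_{e}$, set $\varphi''(w_{e}) := v^{*}_{e}$, and send the two halves of $e$ to $f_{e}$ with indices $m_{\varphi}(x)$ and $m_{\varphi}(y)$ respectively; and third, for every other vertex $x'$ of $G'$ with $\varphi(x') = v$ and $x'$ not an endpoint of $e$, attach a new leaf $\ell_{x',e}$ to $G''$ at $x'$, set $\varphi''(\ell_{x',e}) := v^{*}_{e}$, and assign the new edge the index $m_{\varphi}(x')$. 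All other vertices and edges of $G'$ are mapped by $\varphi''$ exactly as by $\varphi$.

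The step I expect to be the main obstacle, and the reason for the extra leaves of the third operation, is the verification of harmonicity: once $f_{e}$ is attached at $v$ in $T^{\#}$, it becomes an edge adjacent to $\varphi''(x')$ for \emph{every} vertex $x' \in \varphi^{-1}(v)$, so harmonicity forces each such $x'$ to contribute total index mass $m_{\varphi}(x')$ along $f_{e}$, not only the endpoints of $e$. For vertices of $\varphi^{-1}(v)$ that are not incident to $e$ this mass can only be supplied by freshly attached leaves, and the non-degeneracy hypothesis $m_{\varphi} \geq 1$ is precisely what allows these indices to be taken as positive integers. Once this bookkeeping is in place, the remaining verifications are routine: $G''$ is a refinement of $G'$ by construction, $T^{\#}$ is still a tree, $\varphi''$ is finite since no edge of $G''$ is collapsed, and the total index mass over each new leaf edge $f_{e}$ evaluates to $\sum_{x' \in \varphi^{-1}(v)} m_{\varphi}(x') = \deg \varphi$, so $\deg \varphi'' = \deg \varphi = \gon(G')$. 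Hence $\sgon(G) \leq \gon(G')$, and minimizing over refinements $G'$ of $G$ yields the required inequality.
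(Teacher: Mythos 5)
Your proposal is correct and follows essentially the same route as the paper: the nontrivial direction is handled by subdividing each collapsed edge, attaching a corresponding leaf to the tree, and adding leaves at the other preimages of the collapsed vertex with indices $m_\varphi$, so that harmonicity and the degree are preserved. Your bookkeeping is in fact slightly more careful than the paper's (you correctly exclude the endpoints of the collapsed edge from the leaf-attachment step, which the paper's phrasing glosses over).
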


\begin{proof}
It suffices to prove that any non-degenerate harmonic morphism $\varphi \colon G \rightarrow T$ from a graph $G$ to a tree $T$ admits a refinement $\varphi' \colon G' \rightarrow T'$ that is a \emph{finite} harmonic morphism of the same degree as $\varphi$. Thus, let $e=(v_1,v_2) \in G$ denote an edge that is mapped to a vertex $\varphi(e)=x \in \V(T)$. Add an extra leaf $\ell$ to $T$ at $x$ , subdivide $e$ into two edges $(v_1,m)$ and $(m,v_2)$, and map both $e_1$ and $e_2$ to $\ell$. Set $r_{\varphi'}(e_i)=m_\varphi(v_i)$ for $i=1,2$. Finally, add a leaf $\ell_w$ to all $w \in \varphi^{-1}(x)$, map them all to $\ell$, and set $r_{\varphi'}(\ell_w)=m_\varphi(w).$
\end{proof}

The following elementary fact, a ``trivial'' spectral bound on the gonality, does not seem to have been observed before: 

\begin{proposition} \label{conn} 
The gonality of a graph $G$ is bounded below by the edge-connectivity (viz., the number of edges that need to be removed from the graph in order to disconnect it): $$\gon(G) \geq \eta(G).$$ 
If $G$ is a simple graph (i.e., without multiple edges), unequal to a complete graph, then $$\gon(G) \geq \lambda_G.$$
\end{proposition}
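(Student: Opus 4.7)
The plan is to deduce both inequalities by a common mechanism: produce a small edge cut of $G$ out of any harmonic morphism to a tree, and then invoke a classical comparison between spectrum and connectivity.

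For the first inequality $\gon(G) \geq \eta(G)$, I would fix a non-degenerate harmonic morphism $\varphi \colon G \to T$ of degree $d = \gon(G)$ and pick any edge $e' \in \E(T)$. Removing $e'$ splits $T$ into two subtrees $T_1, T_2$, and I would set $V_i = \varphi^{-1}(\V(T_i)) \cap \V(G)$. Because the degree $d = \sum_{v \in \varphi^{-1}(v')} m_\varphi(v)$ is constant and positive on every vertex of $T$, both $V_1$ and $V_2$ are non-empty, so $(V_1, V_2)$ is a genuine cut of $\V(G)$. Any edge $e \in \E(V_1, V_2)$ must map to $e'$ (it cannot be contracted since its endpoints lie in different fibres), and for each such $e$ one has $r_\varphi(e) \geq 1$. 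Therefore
\[
\eta(G) \leq |\E(V_1,V_2)| = |\varphi^{-1}(e')| \leq \sum_{e \in \varphi^{-1}(e')} r_\varphi(e) = d,
\]
which is the desired bound.

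For the second inequality I would compose the edge-cut bound just obtained with two classical facts from algebraic graph theory. First, for any graph one has $\kappa(G) \leq \eta(G)$, where $\kappa(G)$ denotes the vertex connectivity (removing one endpoint of each cut edge yields a vertex cut of no larger size). Second, Fiedler's theorem states that for a simple graph $G$ that is not a complete graph, the algebraic connectivity satisfies $\lambda_G \leq \kappa(G)$. Stringing these together yields
\[
\lambda_G \leq \kappa(G) \leq \eta(G) \leq \gon(G),
\]
as claimed. The only step that requires actual work is the edge-cut construction in the first paragraph; the spectral step is an appeal to standard literature, and nothing more than Fiedler's inequality is needed. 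The main conceptual point — and the only place where harmonicity is really used — is that indices along preimages of a single edge add up to the degree, which translates a graph morphism into a bound on the size of an edge cut.
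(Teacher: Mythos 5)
Your proof is correct and follows essentially the same route as the paper: the preimage of a tree edge under a non-degenerate harmonic morphism is an edge cut of $G$ of size at most the degree, and the spectral statement then follows from Fiedler's inequality (which the paper cites directly in the form $\lambda_G \leq \eta(G)$, while you factor it through the vertex connectivity $\kappa(G)$ --- the same classical chain). Your version just spells out the cut $(V_1,V_2)$ and the non-emptiness of the fibres in more detail.
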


\begin{proof} Let $\f \colon G \rightarrow T$ denote a harmonic non-degenerate morphism. Choose any edge $e \in \E(T)$. Since removing $e$ from $T$ disconnects it, $\f^{-1}(e)$ is a set of edges of $G$ whose removal disconnects $G$. Hence 
$$ \gon(G) \geq |\f^{-1}(e)| \geq \eta(G). $$
For a simple graph which is not complete, the bound $$\eta(G) \geq {\lambda_G}$$ is one of the inequalities of Fiedler \cite{Fiedler} (4.1 \& 4.2).
\end{proof}

\begin{remark} The ``trivial'' spectral bound in the above proposition is not very useful in practice, since it does not contain a ``volume'' term (like the Li--Yau inequality). Also, since every graph acquires edge connectivity two or one by refinements, the lower bound in the proposition trivializes under refinements (which are required by the reduction theory of morphisms). 
\end{remark} 

\begin{se} Another notion of gonality of graphs $G$ and, more generally, of \emph{metric} graphs $\Gamma$ was introduced by Baker in \cite{Baker}, defined as the minimal degree $d$ for which there is a $g_d^1$ on $\Gamma$ (in analogy to the definition from algebraic geometry). Following Caporaso, we call this gonality of graphs \emph{divisorial gonality}. In \cite{CaporasoBN}, Caporaso has proven a Brill--Noether upper bound for divisorial gonality. For a fixed unmetrized graph, there is in general no relation (inequality either side) between gonality and divisorial gonality, cf.\ \cite{Caporaso} Remark 2.7 (or the graphs on lines 4 and 7 of Table \ref{table}). 

Since the reduction of a stable curve is naturally a metric graph (cf.\ \cite{Baker}), one should not ignore the metric in connection with gonality of curves. Baker has proven that \emph{the gonality of a curve $X$ is larger than or equal to the divisorial gonality of its metric reduction graph} (\cite{Baker}, Cor.\ 3.2). 

\emph{The stable gonality of a graph is larger than or equal to its \emph{stable} divisorial gonality (i.e., the minimum of the divisorial gonality of all refinements), but the inequality can be strict.} This can be seen from the last line of Table \ref{table}, which shows an example of Luo Ye (taken from \cite{Amini}) of stable gonality $4$ but divisorial gonality $3$ (for the metrization in which all edges have unit length). The stable gonality can be smaller than the divisorial gonality (without refinement), as one can see from the sixth line in Table \ref{table}; but the next line of the table shows a refinement that lowers the divisorial gonality. 
 
The banana graph $B_n$ has divisorial and stable gonality $2$ but edge connectivity $n$ (cf.\ Table \ref{table}), showing that an equality analogous to the one in Proposition \ref{conn} cannot hold for divisorial or stable gonality. Dion Gijswijt remarked that $\mathrm{dgon}(G) \geq \min\{ |G|, \eta(G) \}.$ With Josse van Dobben de Bruyn, he has also proven that the divisorial gonality of a graph is larger than or equal to its treewidth (unpublished, but some preliminaries can be found in \cite{vD}), but the entries in Table \ref{table} show that the inequality can be strict. Lower bounds on treewidth imply such bounds on divisorial gonality (e.g., \cite{Bod}, \cite{SCS}). 

\end{se} 

\begin{se}
It seems that our notion of stable gonality of a graph coincides with the notion of gonality introduced in \cite{Amini} from the viewpoint of tropical geometry. The connection between tropical curves and metric graphs can already be found in Mikhalkin \cite{Mikhalkin}, and the notion of harmonic morphism of metric graphs in Anand \cite{Anand}. 
\end{se}

\begin{se} We have collected some sample values in 
Table \ref{table}. As above, $\lambda_G$ is the first eigenvalue of $L_G$, and $\lambda_G^\sim$ is the first eigenvalue of the normalized Laplacian $L_G^\sim$; $\eta(G)$ is the edge connectivity, $\Delta_G$ the maximal vertex degree, $\vol(G)$ is the volume of the graph, $\mathrm{tw}(G)$ its treewidth; $\gon(G)$ is the gonality, $\mathrm{dgon}(G)$ is the divisorial gonality, and $\mathrm{sgon}(G)$ is the stable gonality of $G$. We leave out the lengthy but elementary calculations (for the divisorial gonality of $K_n$, we refer to \cite{Baker}, 3.3).
\end{se}

{\footnotesize 
\begin{table}[h] \caption{Some graphs and their invariants, including gonalities} \label{table}
        \centering
        \rotatebox{90}{
                \begin{minipage}{\textheight}{
\begin{tabular}{lcccccccccc}
\hline
Graph $G$ &$\mathrm{sgon}(G)$ &$\gon(G)$ & $\mathrm{dgon}(G)$ & $\eta(G)$ & $\mathrm{tw}(G)$ & $\Delta_G$ & $\lambda_G$  & $|G|$ & $\lambda_G^\sim$ & $\vol(G)$ \\
\hline
Complete graph $K_{n}$& $n-1$ & $n-1$& $n-1$ &  $n-1$ & $n-1$ & $n-1$  & $n$   & $n$ & $\frac{n}{n-1}$ & $n(n-1)$ \\
Cycle graph $C_n$& $2$& $2$& $2$ &  $2$  & $2$ & $2$ & $4 \sin^2(\frac{\pi}{n})$   & $n$ &$2 \sin^2(\frac{\pi}{n})$ & $2n$ \\
Utility graph $K_{3,3}$& $3$& $3$ & $3$ & $3$ & $3$ & $3$ & $3$  &   $6$ & $1$ & $18$ \\
Banana graph $B_n$ &$2$ &$n$ & $2$ & $n$ & $2$ & $n$ & $2n$ & $2$ &  $2$ & $2n$ \\ 
\includegraphics[width=17mm]{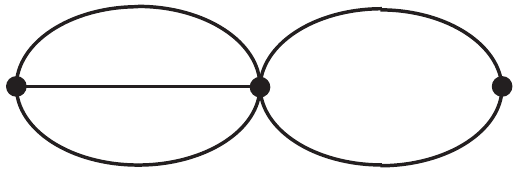} &$3$ & 3& $3$ & $2$ & 2 & $3$ & $5-\sqrt{7} \approx 2.35$&  $3$ &  $\lambda_G$ & $10$ \\ 
\includegraphics[width=17mm]{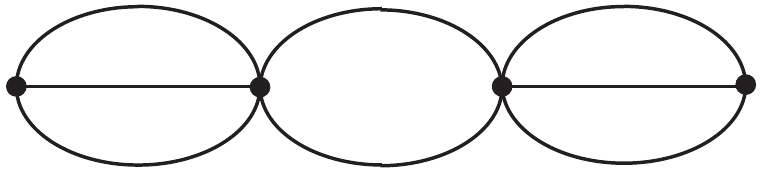}&$3$&$3$&$4$ & $2$ & 2 &  $5$&$5-\sqrt{13}\approx 1.93$&$4$ &$\frac{2}{5}$ & $16$ \\ 
\includegraphics[width=17mm]{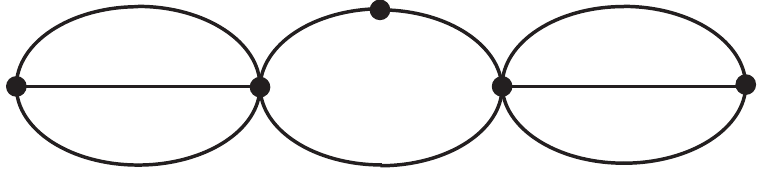}&$3$&?&$3$ & $2$& 2 & $5$ & $\frac{3}{2}(5-\sqrt{13})\approx 1.15$&$5$ &$ \frac{11 - \sqrt{61}}{10} \approx 0.32$ & $18$ \\
\includegraphics[width=15mm]{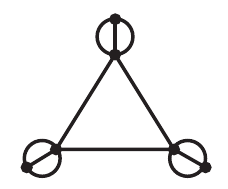}&$4 $&$>3$&$3$&$2$ & 2 & $5$ &$\frac{9-3\sqrt{5}}{2}\approx 1.46$&$6$&$\frac{11-\sqrt{61}}{10} \approx 0.32$&$24$\\ \hline
\end{tabular}
}\end{minipage}}
\end{table}
}

\bibliographystyle{amsplain}

\end{document}